\documentclass[11pt,reqno]{amsart}

\usepackage[T1]{fontenc}
\usepackage[utf8]{inputenc}
\usepackage{lmodern}
\usepackage{amsmath,amssymb,amsthm,mathtools}
\usepackage{mathrsfs}
\usepackage{microtype}
\usepackage{enumitem}
\usepackage[hidelinks]{hyperref}
\usepackage{comment}
\usepackage{xcolor}

\newtheorem{theorem}{Theorem}[section]
\newtheorem{proposition}[theorem]{Proposition}
\newtheorem{lemma}[theorem]{Lemma}
\newtheorem{corollary}[theorem]{Corollary}
\newtheorem{definition}[theorem]{Definition}

\theoremstyle{remark}
\newtheorem{remark}[theorem]{Remark}

\usepackage[capitalize,nameinlink]{cleveref}

\newcommand{\dd}{\,d}
\newcommand{\abs}[1]{\lvert #1\rvert}

\newcommand{\one}{\mathbf 1}

\newcommand{\Mp}{M_p}
\newcommand{\Pp}{\mathcal P_p}
\newcommand{\Wp}{W_p}

\title[Small-Order Metric-Valued Nonlocal Energies]{Decoupling and Tail Laws for Small-Order Metric-Valued Nonlocal Energies: A Structural View of the Maz'ya--Shaposhnikova Formula}
\author{Andrea Pinamonti}
\address{Dipartimento di Matematica, Università degli Studi di Trento, Via Sommarive, 14, 38123 Povo (Trento), Italia}
\email{andrea.pinamonti@unitn.it}

\begin{document}

\begin{abstract}
We develop a measure-theoretic framework for small-order limits of nonlocal
energies with metric-valued maps. For globally $L^p$ maps, we consider
interaction measures whose marginals have uniformly bounded $L^\infty$
densities $a_\alpha$ and $b_\alpha$. If these densities converge weakly-* to
$a_\infty$ and $b_\infty$, the normalized energies converge to the $L^p$
energy weighted by $a_\infty+b_\infty$ if and only if the interaction
measures escape every bounded rectangle. For maps that are only locally
$L^p$, we introduce target laws describing the distribution of the values
sampled at infinity. Convergence of their $p$-distance profiles yields the
limiting interaction energy for every $1\leq p<\infty$ and arbitrary Polish
targets, with integrated $p$-Wasserstein convergence as a sufficient
criterion. The arguments include the endpoint $p=1$ and require no linear
structure on the target space. Applications include directional and
anisotropic Maz'ya--Shaposhnikova formulas, a Bernoulli-law interpretation of
fractional perimeters, and an Abelian principle for heat-semigroup energies.
\end{abstract}
\maketitle

\section{Introduction}
For $0<s<1$, $1\leq p<\infty$, and a measurable map
$u:\mathbb R^d\to\mathbb R$, consider the fractional interaction
\[
 [u]_{W^{s,p}(\mathbb R^d)}^p
 :=
 \int_{\mathbb R^d}\int_{\mathbb R^d}
 \frac{|u(x)-u(y)|^p}{|x-y|^{d+sp}}\,\dd y\,\dd x.
\]
The two endpoint regimes have different geometric content. In the
Bourgain--Brezis--Mironescu regime $s\uparrow1$, the
interaction kernel concentrates near the diagonal and detects the local
regularity of $u$.  More precisely, for $p>1$ and
$u\in W^{1,p}(\mathbb R^d)$,
\[
 \lim_{s\uparrow1}(1-s)[u]_{W^{s,p}(\mathbb R^d)}^p
 =\kappa_{d,p}\int_{\mathbb R^d}|\nabla u|^p\,\dd x, \tag{BBM}
\]
where $\kappa_{d,p}:=\frac{1}{p}\int_{\mathbb S^{d-1}}
|\omega\cdot e|^p\,d\mathcal H^{d-1}(\omega)$, $e\in \mathbb{S}^{d-1}$ ; see \cite{BBM2001}, as well as
\cite{BourgainNguyen,BrenaPasqualettoPinamonti,BrenaPinamonti,
DavoliDiFrattaPagliari,GennaioliStefani,NguyenSobolev,Ponce2004}.
The endpoint $p=1$ and the corresponding characterization of functions of
bounded variation were established by D\'avila \cite{Davila2002}.  Related
metric-measure results were obtained in
\cite{LahtiPinamontiZhou,LahtiPinamontiZhouJGA,Oleinik2025,
MaalaouiPinamonti,SerraCassanoZambanini}.

The small-order regime $s\downarrow0$ has a different nature.  The
Maz'ya--Shaposhnikova formula \cite{MazyaShaposhnikova2002} states that,
for every $u\in\bigcup_{0<s<1}W^{s,p}(\mathbb R^d)$,
\begin{equation}
 \lim_{s\downarrow0}s[u]_{W^{s,p}(\mathbb R^d)}^p
 =\frac{2\sigma_{d-1}}p
 \int_{\mathbb R^d}|u(x)|^p\,\dd x,
 \qquad
 \sigma_{d-1}:=\mathcal H^{d-1}(\mathbb S^{d-1}). \tag{MS}
\end{equation}
After multiplication by $s$, interactions at every fixed bounded scale
vanish, while the surviving mass is carried by pairs whose distance tends to
infinity. At the level of the normalized integral, the cost
$\lvert u(x)-u(y)\rvert^p$ then decouples into one-point contributions.

Ludwig \cite{Ludwig2014} established anisotropic versions of the
Maz'ya--Shaposhnikova formula.  The corresponding asymptotics of fractional
perimeters were studied by Dipierro--Figalli--Palatucci--Valdinoci
\cite{DipierroFigalliPalatucciValdinoci2013}.  Fractional
Orlicz--Sobolev asymptotics on Carnot groups were obtained in
\cite{CapolliMaioneSalortVecchi2021}.  A geometric interpretation in terms
of volume growth at infinity was developed by Han \cite{Han2024} and, in a
metric-measure framework containing Carnot groups and spaces satisfying
Bishop--Gromov-type estimates, by Han, Xu, Zambanini and the author
\cite{HanPinamontiXuZambanini2025}.  Almost sharp conditions for broad
classes of Euclidean kernels, including non-symmetric and non-homogeneous
interactions, were recently obtained in
\cite{DavoliDiFrattaGiorgioPinamonti2025}. A recent geometric
characterization appears in \cite{GaoYuZhang2026}.\\

These results motivate a product-measure formulation in which the
small-order limit is determined by the escape of normalized interaction mass
from bounded subsets of the product space.

More precisely, let $(\Pi_\alpha)$ be a family of positive measures on
$X\times X$, with marginals $a_\alpha\mu$ and $b_\alpha\mu$, where
$a_\alpha,b_\alpha\in L^\infty_+(X,\mu)$, and write
\[
\mathcal E_\alpha(u)
:=
\int_{X\times X} d_Y(u(x),u(y))^p\,\dd\Pi_\alpha(x,y).
\]
Assume that $a_\alpha\stackrel{*}{\rightharpoonup}a_\infty$ and
$b_\alpha\stackrel{*}{\rightharpoonup}b_\infty$ in $L^\infty(X,\mu)$.\\

\noindent Our first question is then:
\begin{center}
\textit{Given a metric space $(Y,d_{Y})$, which asymptotic condition on $\Pi_\alpha$ characterizes the decoupling}
\end{center}
\[
\mathcal E_\alpha(u)
\longrightarrow
\int_X(a_\infty+b_\infty)d_Y(u(x),y_0)^p\,\dd\mu(x)
\quad\textit{for every }u\in L^p(X;Y)\textit{?}
\]

The second is:

\begin{center}\textit{What replaces the limit above when the map is only locally $L^p$ and retains nontrivial states at infinity?}
\end{center}

The latter issue is closely related to the mass-at-infinity
formalism introduced by Davoli--Fanizza--Picerni
\cite{DavoliFanizzaPicerni2026} in the  Euclidean setting.  On
Riemannian manifolds, Caselli--Gennaioli \cite{CaselliGennaioli} found an
analogous phenomenon in which the small-order limit is sensitive to
stochastic completeness and, in the non-conservative case, to a bounded
harmonic function recording the mass that survives at infinity. The results described
below give a common measure-theoretic answer to these two questions.\\

\textit{Decoupling.}
Let $(X,d_X)$ be a proper metric space, let $\mu$ be a positive Radon
measure finite on bounded sets, and let $(Y,d_Y,y_0)$ be a pointed metric
space containing at least two points.  Consider a net
$(\Pi_\alpha)_\alpha$ of positive Borel measures on $X\times X$ whose
marginals satisfy
\[
 (\pi_1)_\#\Pi_\alpha=a_\alpha\mu,
 \qquad
 (\pi_2)_\#\Pi_\alpha=b_\alpha\mu,
\]
where
\[
 \sup_\alpha\bigl(\|a_\alpha\|_\infty+\|b_\alpha\|_\infty\bigr)<\infty,
 \qquad
 a_\alpha\stackrel{*}{\rightharpoonup}a_\infty,
 \qquad
 b_\alpha\stackrel{*}{\rightharpoonup}b_\infty.
\]
For $u\in L^p(X;Y)$, set
\[
 J_{\Pi_\alpha}(u)
 :=\int_{X\times X}d_Y(u(x),u(y))^p\,\dd\Pi_\alpha(x,y),
 \qquad
 M_p(u):=\int_Xd_Y(u(x),y_0)^p\,\dd\mu(x).
\]
Theorem~\ref{thm:abstract-decoupling} proves the equivalence
\[
 \Pi_\alpha(K\times K)\longrightarrow0
 \quad\text{for every bounded Borel set }K\subset X
\]
if and only if
\[
 J_{\Pi_\alpha}(u)
 \longrightarrow
 \int_X(a_\infty+b_\infty)d_Y(u,y_0)^p\,\dd\mu
 \quad\text{for every }u\in L^p(X;Y).
\]
We call the first property \emph{escape from bounded rectangles}. Under the
stated marginal assumptions, it is equivalent to metric-valued decoupling.
Necessity is detected by maps taking only two values; sufficiency follows
from a compact-support decomposition and a uniform $L^p$-stability estimate.
The proof uses only the metric structure of the target and includes $p=1$.\\

\textit{Target laws and energy profiles at infinity.}
Let
$\Omega\subset X$ be bounded, let $K_\varepsilon(x,\cdot)$ be a finite
positive kernel (see Section \ref{sec:tail-laws} for the definition) from $\Omega$ to $X$, and write
\[
 m_\varepsilon(x):=K_\varepsilon(x,X).
\]
For $m_\varepsilon(x)>0$, define the probability law on $Y$ seen from $x$ by
\[
 \lambda_{\varepsilon,x}
 :=u_\#\left(\frac{K_\varepsilon(x,\cdot)}{m_\varepsilon(x)}\right).
\]
Now suppose that $Y$ is Polish, $u|_\Omega\in L^p(\Omega;Y)$, and that, for
some $\lambda_\infty\in\mathcal P_p(Y)$, the associated $p$-distance
profiles converge in the sense that, for every $a\in Y$,
\[
 \left(\int_Yd_Y(a,z)^p\,\dd\lambda_{\varepsilon,x}(z)\right)^{1/p}
 \longrightarrow
 \left(\int_Yd_Y(a,z)^p\,\dd\lambda_\infty(z)\right)^{1/p}
 \quad\text{in }L^p(\Omega).
\]
If, moreover, $m_\varepsilon\to m$ in $L^\infty(\Omega)$, then
Theorem~\ref{thm:metric-tail-law} gives
 \begin{equation*}
 \int_\Omega\int_X
 d_Y(u(x),u(y))^pK_\varepsilon(x,\dd y)\,\dd\mu(x) \longrightarrow
 \int_\Omega m(x)\int_Yd_Y(u(x),z)^p
 \,\dd\lambda_\infty(z)\,\dd\mu(x).
 \end{equation*}
Thus the state at infinity is represented by a probability law rather than
by a scalar moment. The quadratic case in
\cite{DavoliFanizzaPicerni2026} is recovered from the first two moments; the
profile formulation works for every
$1\leq p<\infty$ and avoids algebraic expansions of the interaction cost.

A first application is a directional Maz'ya--Shaposhnikova formula. Let
\(\Omega\subset\mathbb R^d\) be bounded and let
\(u\in L^p_{\mathrm{loc}}(\mathbb R^d;Y)\). Suppose that there exists a Borel map
\[
u_\infty:\mathbb S^{d-1}\to Y,
\qquad
d_Y(u_\infty(\cdot),y_0)\in L^p(\mathbb S^{d-1}),
\]
such that,
\[
 \lim_{s\downarrow0}s\int_{B_1^c}
 \frac{d_Y(u(y),a)^p}{|y|^{d+sp}}\,\dd y
 =
 \frac1p\int_{\mathbb S^{d-1}}
 d_Y(a,u_\infty(\theta))^p\,\dd\mathcal H^{d-1}(\theta)
 \qquad\forall a\in Y.
\]
Set
\[
Q_\Omega:=
(\mathbb R^d\times\mathbb R^d)\setminus(\Omega^c\times\Omega^c)
\]
and define the localized fractional interaction by
\[
 \mathcal E_{s,p}^{\Omega}(u)
 :=
 \iint_{Q_\Omega}
 \frac{d_Y(u(x),u(y))^p}{|x-y|^{d+sp}}
 \,\dd y\,\dd x.
\]
If \(\mathcal E_{s_0,p}^{\Omega}(u)<\infty\) for some \(s_0>0\), then
Theorem~\ref{thm:directional-metric-MS} yields
\[
 \lim_{s\downarrow0}\frac{s}{2}\mathcal E_{s,p}^{\Omega}(u)
 =
 \frac1p\int_\Omega\int_{\mathbb S^{d-1}}
 d_Y(u(x),u_\infty(\theta))^p
 \,\dd\mathcal H^{d-1}(\theta)\,\dd x.
\]
For $Y=\mathbb R$ and $p=2$, this condition reduces to convergence of the
first two moments at infinity and the formula agrees with
\cite[Theorem~2.11]{DavoliFanizzaPicerni2026}. The argument also applies to norms induced by
origin-symmetric convex bodies.  This gives the metric-valued anisotropic
formula of Proposition~\ref{prop:anisotropic-directional-MS} and recovers
Ludwig's theorem \cite{Ludwig2014} when the state at infinity vanishes and
the map is compactly supported.\\

\textit{Singular energies and fractional perimeters.} If
$E\subset\mathbb R^n$ has finite fractional perimeter in $\Omega$ at one
positive order and
\[
 \alpha(E):=\lim_{s\downarrow0}
 s\int_{E\cap B_1^c}\frac{\dd y}{|y|^{n+s}}
\]
exists, set
\[
 \vartheta_E:=\frac{\alpha(E)}{\sigma_{n-1}},
 \qquad
 \lambda_\infty^E:=(1-\vartheta_E)\delta_0+\vartheta_E\delta_1.
\]
Proposition~\ref{prop:Bernoulli-tail-perimeter} then yields
\[
 \lim_{s\downarrow0}s\,\operatorname{Per}_s(E;\Omega)
 =\sigma_{n-1}
 \bigl[(1-\vartheta_E)|E\cap\Omega|
       +\vartheta_E|E^c\cap\Omega|\bigr].
\]
This recovers the formula of
Dipierro--Figalli--Palatucci--Valdinoci
\cite{DipierroFigalliPalatucciValdinoci2013} and identifies its
mass-at-infinity parameter as the parameter of a Bernoulli target law.\\

\textit{Heat-semigroup energies.}
Let $q_t(x,y)$ have marginal densities
$a_t,b_t\in L^\infty_+(X,\mu)$, and set
\[
 J_p(t,u):=\int_X\int_Xd_Y(u(x),u(y))^pq_t(x,y)
 \,\dd\mu(y)\,\dd\mu(x).
\]
If the corresponding couplings escape from bounded rectangles as
$t\to\infty$ and $a_t,b_t$ converge weakly-* to
$a_\infty,b_\infty$, then
\[
 J_p(t,u)\longrightarrow
 \int_X(a_\infty+b_\infty)d_Y(u,y_0)^p\,\dd\mu.
\]
An Abelian argument transfers this large-time limit to the small-order
asymptotics of
\[
 \mathscr N_{s,p}^{q,\beta}(u)^p
 :=\int_0^\infty t^{-\beta sp-1}J_p(t,u)\,\dd t.
\]
Under the short-time integrability assumption in
Theorem~\ref{thm:Abel-semigroup}, one obtains
\[
 \lim_{s\downarrow0}s\mathscr N_{s,p}^{q,\beta}(u)^p
 =\frac1{\beta p}\int_X(a_\infty+b_\infty)
 d_Y(u,y_0)^p\,\dd\mu.
\]

This formula determines the constants in the
non-symmetric hypoelliptic setting of Buseghin--Garofalo--Tralli
\cite{BuseghinGarofaloTralli2022}.

Moreover, on a complete connected Riemannian manifold of infinite volume, the
marginals need not be constant multiples of the volume measure.  If $h_t$
is the minimal heat kernel and
\[
 \theta_M(x):=\lim_{t\to\infty}\int_Mh_t(x,y)\,\dd\mu(y),
\]
Corollary~\ref{cor:riemannian-infinite-volume} gives
\[
 \lim_{s\downarrow0}s\mathscr N_{s,p}^{h,1/2}(u)^p
 =\frac4p\int_M\theta_M(x)d_Y(u(x),y_0)^p\,\dd\mu(x).
\]
This extends the large-time result of
Caselli--Gennaioli \cite{CaselliGennaioli} from scalar-valued maps and
$p=2$ to arbitrary metric targets and every $1\leq p<\infty$.  Further
applications include infinite weighted graphs and the dimension-free Baouendi--Grushin formula of Li--Wang
\cite{LiWang2024}.\\

\textit{Carnot groups.}
Finally, let $G$ be a Carnot group of homogeneous dimension $Q$, equipped
with a homogeneous norm $N_G$, and let $\sigma_G$ be the
corresponding polar measure on
\[
 S_G:=\{h\in G:N_G(h)=1\}.
\]
For $u\in L^p(G;Y)$, define
\[
 E_{s,p}(u):=\int_G\int_G
 \frac{d_Y(u(x),u(y))^p}{N_G(x^{-1}y)^{Q+sp}}\,\dd y\,\dd x.
\]
If $E_{\theta,p}(u)<\infty$ for some $\theta>0$,
Theorem~\ref{thm:Carnot-MS} gives
\[
 \lim_{s\downarrow0}sE_{s,p}(u)
 =\frac{2\sigma_G(S_G)}p\int_Gd_Y(u(x),y_0)^p\,\dd x.
\]
No differential structure of the target is used. In particular, the
result applies to maps between Carnot groups endowed with left-invariant
homogeneous distances. The scalar Carnot-group formula is recovered by
taking $Y=\mathbb R$. Similarly, the scalar
heat-semigroup formula of Garofalo--Tralli
\cite[Theorem~1.5]{GarofaloTralli2024} is recovered when two Haar marginals
of unit mass survive at large time.\\

The approach developed in the present paper can be generalized to the case $s\uparrow 1$, the results will appear in \cite{Pin}.\\

\textbf{Organization of the paper.}
Section~\ref{metric} establishes the abstract two-marginal decoupling
theorem and characterizes metric-valued decoupling by escape from
bounded rectangles.  Section~\ref{sec:tail-laws} develops the theory of
target laws and their energy profiles, and derives the Euclidean directional
and anisotropic formulas.
Section~\ref{abstract} separates short- and long-range interactions and
interprets fractional-perimeter asymptotics through Bernoulli laws at
infinity.  Section~\ref{sec:semigroups} proves the Abelian semigroup
principle and applies it to non-symmetric hypoelliptic semigroups,
Riemannian manifolds, weighted graphs, and Baouendi--Grushin structures.
Section~\ref{sec:Carnot} concludes with the metric-valued
Maz'ya--Shaposhnikova formula on Carnot groups.\\

\textbf{Acknowledgments:} The author thanks Elisa Davoli for some comments on a preliminary version of the paper.

\section{Metric-valued \texorpdfstring{$L^p$}{Lp} maps, two-marginal couplings and the abstract decoupling theorem}\label{metric}

Throughout this section $(X,d_X)$ denotes a proper metric
space\footnote{A metric space $(X,d_X)$ is called proper if every closed
bounded subset is compact.} and $\mu$ a positive Radon measure, finite on bounded sets and not necessarily globally finite. We also fix a pointed metric space $(Y,d_Y,y_0)$ and 
$1\le p<\infty$.

\begin{definition}
A measurable map $u:X\to Y$ belongs to $L^p(X;Y)$, relative to the chosen
base point $y_0$, if
\begin{equation*}
 \Mp(u):=\int_X d_Y(u(x),y_0)^p\,\dd\mu(x)<\infty.
\end{equation*}
For $u,v\in L^p(X;Y)$ we set
\begin{equation*}
 d_p(u,v):=
 \left(\int_X d_Y(u(x),v(x))^p\,\dd\mu(x)\right)^{1/p}.
\end{equation*}
\end{definition}

The dependence on the base point is harmless on a finite-measure source,
but it matters on an infinite-measure space.  We therefore keep the pointed
target throughout the paper.

\begin{lemma}
\label{lem:truncation}
For every $u\in L^p(X;Y)$ there exists a sequence $u_j\in L^p(X;Y)$ such
that
\begin{enumerate}[label=\textup{(\roman*)}]
\item $u_j=y_0$ outside a compact set $K_j\subset X$;
\item $d_Y(u_j(x),y_0)\le j$ for every $x\in X$;
\item $d_p(u_j,u)\to0$.
\end{enumerate}
\end{lemma}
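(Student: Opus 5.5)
The plan is to truncate simultaneously in space and in the target. Fix $x_0\in X$ and, for $j\in\mathbb N$, let $K_j:=\overline{B}(x_0,j)$. Since $X$ is proper, $K_j$ is compact. Set
\[
 u_j(x):=\begin{cases} u(x) & \text{if } x\in K_j \text{ and } d_Y(u(x),y_0)\le j,\\ y_0 & \text{otherwise.}\end{cases}
\]
We replace the value by the base point instead of projecting radially. A metric target has no linear structure, so a radial projection onto the ball of radius $j$ need not exist. Substituting $y_0$ avoids this issue altogether.

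First, $u_j$ is measurable. The set $A_j:=K_j\cap\{x: d_Y(u(x),y_0)\le j\}$ is measurable, because $x\mapsto d_Y(u(x),y_0)$ is measurable, being the composition of $u$ with a continuous function. Also $u_j$ equals $u$ on $A_j$ and the constant $y_0$ on $A_j^c$. Properties (i) and (ii) then hold by construction. Moreover $d_Y(u_j,y_0)\le d_Y(u,y_0)$ pointwise, so $u_j\in L^p(X;Y)$.

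For (iii), note that $d_Y(u_j(x),u(x))$ vanishes on $A_j$ and equals $d_Y(y_0,u(x))$ on $A_j^c$. Hence
\[
 d_p(u_j,u)^p=\int_X \one_{A_j^c}(x)\,d_Y(u(x),y_0)^p\,\dd\mu(x).
\]
The sets $A_j$ increase to $\{x: d_Y(u(x),y_0)<\infty\}=X$, since $K_j\uparrow X$. The integrand is therefore dominated by $d_Y(u,y_0)^p\in L^1(\mu)$ and tends to $0$ pointwise. Dominated convergence gives $d_p(u_j,u)\to0$.

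The only delicate point is measurability, in particular which $\sigma$-algebra on $Y$ is meant. With Borel measurability of $u$ it is immediate as above. No separability of $Y$ is needed, since only the preimage of a closed ball and the constant map are involved.
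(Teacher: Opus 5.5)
Your proof is correct and follows the paper's construction: you use the same definition of $u_j$, replacing the value by $y_0$ off $K_j\cap\{d_Y(u,y_0)\le j\}$, and conclude by dominated convergence. The only difference is the choice of compact sets: you take $K_j=\overline{B}(x_0,j)$, compact by properness, and run a single dominated-convergence argument, whereas the paper picks $K_j$ by inner regularity of the finite Radon measure $d_Y(u,y_0)^p\mu$ with tail below $2^{-j}$ and then splits $\{u_j\neq u\}$ into two pieces, so your version is slightly more elementary.
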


\begin{proof}
  The finite Borel measure
$d_Y(u(x),y_0)^p\mu$ is Radon.  Hence for every $j\in\mathbb{N}$ one can choose a compact
$K_j\subset X$ such that
\[
 \int_{X\setminus K_j}d_Y(u(x),y_0)^p\,\dd\mu<2^{-j}.
\]
Define
\[
 u_j(x):=
 \begin{cases}
  u(x),&x\in K_j\text{ and }d_Y(u(x),y_0)\le j,\\
  y_0,&\text{otherwise}.
 \end{cases}
\]
Then (i) and (ii) are immediate.  Moreover,
\begin{align*}
 d_p(u_j,u)^p
 &=\int_{\{u_j\ne u\}}d_Y(u(x),y_0)^p\,\dd\mu(x)\\
 &\le
 \int_{X\setminus K_j}d_Y(u(x),y_0)^p\,\dd\mu(x)
 +\int_{\{d_Y(u(x),y_0)>j\}}d_Y(u(x),y_0)^p\,\dd\mu(x).
\end{align*}
The first term tends to zero by construction and the second tends to zero by
dominated convergence.  Hence (iii) follows.
\end{proof}

\begin{definition}
\label{def:bounded-marginal-coupling}
Let $a,b\in L^\infty_+(X,\mu)$. A positive Borel measure $\Pi$ on
$X\times X$ is called an \emph{$(a,b)$-marginal coupling} of $\mu$ if
\begin{equation*}
 (\pi_1)_\#\Pi=a\mu,
 \qquad
 (\pi_2)_\#\Pi=b\mu,
\end{equation*}
where $\pi_1(x,y)=x$ and $\pi_2(x,y)=y$. Equivalently, for every
nonnegative Borel function $\varphi$,
\[
 \int_{X\times X}\varphi(x)\,\dd\Pi(x,y)
 =\int_X a\varphi\,\dd\mu,
 \qquad
 \int_{X\times X}\varphi(y)\,\dd\Pi(x,y)
 =\int_X b\varphi\,\dd\mu.
\]
\end{definition}

For such a coupling and $u\in L^p(X;Y)$, define the \emph{increment
energy}
\begin{equation*}
 J_\Pi(u):=
 \int_{X\times X}d_Y(u(x),u(y))^p\,\dd\Pi(x,y).
\end{equation*}
The triangle inequality and the marginal identities give
\begin{equation}\label{eq:J-Pi-bound}
 J_\Pi(u)
 \le 2^{p-1}\bigl(\|a\|_\infty+\|b\|_\infty\bigr)\Mp(u)<+\infty.
\end{equation}

\begin{definition}
\label{def:escape-rectangles}
Let $(\Pi_\alpha)_{\alpha}$ be a family of positive Borel measures on $X\times X$ indexed by a directed set\footnote{A partially ordered set $(I,\preceq)$ is called a \emph{directed set} if, for every
$i,j\in I$, there exists $k\in I$ such that
\[
i\preceq k
\qquad\text{and}\qquad
j\preceq k.
\]}. We say that $(\Pi_\alpha)_{\alpha}$ \emph{escapes from bounded rectangles} if, for every bounded Borel
set $K\subset X$,
\begin{equation}\label{eq:escape-rectangles}
 \Pi_\alpha(K\times K)\longrightarrow0,
\end{equation}
where the limit is understood along the directed set.
\end{definition}
\begin{remark}\label{compact}
Since $X$ is proper, it is equivalent to test \eqref{eq:escape-rectangles}
on compact sets. Indeed, if $K\subset X$ is bounded and Borel, then
$\overline K$ is compact and
\[
0\leq \Pi_{\alpha}(K\times K)\leq \Pi_{\alpha}(\overline{K}\times \overline{K}).
\]
Thus, if $\Pi_{\alpha}(C\times C)\to 0$ for every compact $C\subset X$,
then $\Pi_{\alpha}(K\times K)\to 0$.  The converse is immediate because
every compact subset of a metric space is bounded and Borel.
\end{remark}

\begin{lemma}
\label{lem:coupling-stability}
Let $\Pi$ be an $(a,b)$-marginal coupling of $\mu$. Then for every
$u,v\in L^p(X;Y)$,
\begin{equation*}
 \abs{J_\Pi(u)^{1/p}-J_\Pi(v)^{1/p}}
 \le \bigl(\|a\|_\infty^{1/p}+\|b\|_\infty^{1/p}\bigr)d_p(u,v).
\end{equation*}
\end{lemma}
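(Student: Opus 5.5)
The plan is to view $J_\Pi(\cdot)^{1/p}$ as an $L^p(\Pi)$-norm and combine Minkowski's inequality with the marginal identities. For $u\in L^p(X;Y)$, set
\[
F_u(x,y):=d_Y(u(x),u(y)),
\]
so that $J_\Pi(u)^{1/p}=\|F_u\|_{L^p(\Pi)}$. By \eqref{eq:J-Pi-bound}, $F_u\in L^p(\Pi)$ for every $u\in L^p(X;Y)$, so both quantities in the statement are finite and the reverse triangle inequality in $L^p(\Pi)$ applies. It gives
\[
\abs{J_\Pi(u)^{1/p}-J_\Pi(v)^{1/p}}\le \|F_u-F_v\|_{L^p(\Pi)}.
\]
The problem is thus reduced to bounding $|F_u-F_v|$ pointwise.

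For the pointwise bound, I would apply the triangle inequality in $Y$ twice:
\[
d_Y(u(x),u(y))\le d_Y(u(x),v(x))+d_Y(v(x),v(y))+d_Y(v(y),u(y)),
\]
and symmetrically with $u$ and $v$ exchanged. Together these give
\[
|F_u(x,y)-F_v(x,y)|\le g(x)+g(y),\qquad g(z):=d_Y(u(z),v(z)).
\]
Measurability is not an issue, since $g$ is measurable as a composition of measurable maps into $Y\times Y$ with the continuous function $d_Y$.

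Next, Minkowski's inequality in $L^p(\Pi)$ gives
\[
\|F_u-F_v\|_{L^p(\Pi)}\le \|g\circ\pi_1\|_{L^p(\Pi)}+\|g\circ\pi_2\|_{L^p(\Pi)}.
\]
The marginal identities of Definition~\ref{def:bounded-marginal-coupling}, applied to the nonnegative Borel function $g^p$, yield
\[
\int g(x)^p\,\dd\Pi=\int_X a g^p\,\dd\mu\le\|a\|_\infty d_p(u,v)^p,
\]
and similarly $\int g(y)^p\,\dd\Pi\le \|b\|_\infty d_p(u,v)^p$. Taking $p$-th roots and summing gives the constant $\|a\|_\infty^{1/p}+\|b\|_\infty^{1/p}$.

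No step is a real obstacle. The only point needing care is to use Minkowski's inequality rather than the convexity bound $2^{p-1}$; this is what produces the sharp constant stated in the lemma and keeps the argument valid for $p=1$.
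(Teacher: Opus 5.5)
Your proposal is correct and follows the paper's proof essentially step by step: the reverse triangle inequality in $L^p(\Pi)$, the pointwise bound $\abs{d_Y(u(x),u(y))-d_Y(v(x),v(y))}\le d_Y(u(x),v(x))+d_Y(u(y),v(y))$, Minkowski's inequality, and the marginal identities to produce the constant $\|a\|_\infty^{1/p}+\|b\|_\infty^{1/p}$. Your added justification of finiteness via \eqref{eq:J-Pi-bound} is a small detail the paper leaves implicit.
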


\begin{proof}
The reverse triangle inequality in 
$L^p(X\times X,\Pi)$ gives
\begin{align*}
 &\abs{J_\Pi(u)^{1/p}-J_\Pi(v)^{1/p}}\\
 &\qquad\le
 \left(
 \int_{X\times X}
 \abs{d_Y(u(x),u(y))-d_Y(v(x),v(y))}^p\,\dd\Pi
 \right)^{1/p}.
\end{align*}
By the metric triangle inequality,
\[
 \abs{d_Y(u(x),u(y))-d_Y(v(x),v(y))}
 \le d_Y(u(x),v(x))+d_Y(u(y),v(y)).
\]
Minkowski's inequality and the marginal identities imply
\begin{align*}
 &\left(
 \int_{X\times X}
 \abs{d_Y(u(x),u(y))-d_Y(v(x),v(y))}^p\,\dd\Pi
 \right)^{1/p}\\
 &\quad\le
 \left(\int d_Y(u(x),v(x))^p\,\dd\Pi\right)^{1/p}
 +\left(\int d_Y(u(y),v(y))^p\,\dd\Pi\right)^{1/p}\\
 &\quad\le
 \|a\|_\infty^{1/p}d_p(u,v)
 +\|b\|_\infty^{1/p}d_p(u,v).
\end{align*}
This concludes the proof.
\end{proof}

\begin{proposition}
\label{prop:compact-decoupling}
Let $\Pi$ be an $(a,b)$-marginal coupling and suppose that $u:X\to Y$
satisfies $u=y_0$ outside a bounded Borel set $K$ and
$d_Y(u(x),y_0)\le L$ for every $x\in X$. Then
\begin{equation}\label{eq:compact-decomposition}
 J_\Pi(u)=\int_X(a+b)d_Y(u,y_0)^p\,\dd\mu+R_\Pi(u;K),
\end{equation}
where
\begin{equation}\label{eq:R-compact}
 R_\Pi(u;K)=
 \int_{K\times K}
 \Bigl[d_Y(u(x),u(y))^p
 -d_Y(u(x),y_0)^p-d_Y(u(y),y_0)^p\Bigr]\,\dd\Pi
\end{equation}
and
\begin{equation}\label{eq:R-bound}
 \abs{R_\Pi(u;K)}\le 2^{p+1}L^p\Pi(K\times K).
\end{equation}
\end{proposition}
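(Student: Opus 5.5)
The plan is to split $X\times X$ into $K\times K$, $K\times K^c$, $K^c\times K$ and $K^c\times K^c$, and evaluate the integrand $d_Y(u(x),u(y))^p$ on each piece using $u=y_0$ off $K$. All quantities are finite: $u$ is bounded by $L$ and supported in the bounded set $K$, and $\mu(K)<\infty$, so $u\in L^p(X;Y)$ and \eqref{eq:J-Pi-bound} applies.

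The three pieces other than $K\times K$ reduce as follows.
\begin{itemize}
\item On $K^c\times K^c$ both values equal $y_0$, so the integrand vanishes.
\item On $K\times K^c$ the integrand is $d_Y(u(x),y_0)^p$. Since $d_Y(u(y),y_0)=0$ for $y\in K^c$, it equals $d_Y(u(x),y_0)^p+d_Y(u(y),y_0)^p$.
\item On $K^c\times K$ the symmetric statement holds.
\end{itemize}
Hence on the complement of $K\times K$ the integrand coincides with $f(x)+f(y)$, where $f:=d_Y(u,y_0)^p$. On $K\times K$ we add and subtract $f(x)+f(y)$; the leftover is exactly $R_\Pi(u;K)$ as in \eqref{eq:R-compact}. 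This gives
\[
J_\Pi(u)=\int_{X\times X}\bigl(f(x)+f(y)\bigr)\,\dd\Pi+R_\Pi(u;K).
\]
By the marginal identities of Definition~\ref{def:bounded-marginal-coupling}, the first term is $\int_X(a+b)f\,\dd\mu$, which proves \eqref{eq:compact-decomposition}.

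Every integral involved is finite, so the splitting is legitimate. Indeed $f\le L^p\mathbf 1_K$ with $\mu(K)<\infty$ and $a,b\in L^\infty$, so $\int_{X\times X}(f(x)+f(y))\,\dd\Pi<\infty$. Also $\Pi(K\times K)\le\int_X a\mathbf 1_K\,\dd\mu<\infty$.

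For \eqref{eq:R-bound}, bound the bracket pointwise on $K\times K$. The triangle inequality gives $d_Y(u(x),u(y))\le 2L$, so $d_Y(u(x),u(y))^p\le 2^pL^p$, and the two subtracted terms together are at most $2L^p$. Therefore the absolute value of the bracket is at most $2^pL^p+2L^p\le 2^{p+1}L^p$, since $p\ge1$. Integrating over $K\times K$ yields $\abs{R_\Pi(u;K)}\le 2^{p+1}L^p\Pi(K\times K)$.

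There is no real obstacle here; the only care needed is the finiteness bookkeeping above, so that no $\infty-\infty$ arises when adding and subtracting.
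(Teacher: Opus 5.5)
Your proposal is correct and follows essentially the paper's proof: the bracket vanishes off $K\times K$, the marginal identities turn $\int_{X\times X}(f(x)+f(y))\,\dd\Pi$ into $\int_X(a+b)f\,\dd\mu$, and the bracket is bounded pointwise on $K\times K$ using $d_Y(u(x),u(y))\le 2L$. Your explicit check that every integral is finite, so that no $\infty-\infty$ arises when adding and subtracting, is a small addition that the paper leaves implicit.
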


\begin{proof}
 Since $u=y_0$ on $K^c$, we get
\[
 d_Y(u(x),u(y))^p-d_Y(u(x),y_0)^p-d_Y(u(y),y_0)^p=0
 \quad\text{for }(x,y)\notin K\times K.
\]
Hence
\begin{align*}
 J_\Pi(u)
 &=\int_{X\times X}
 \bigl(d_Y(u(x),y_0)^p+d_Y(u(y),y_0)^p\bigr)\,\dd\Pi\\
 &\quad+\int_{K\times K}
 \bigl[d_Y(u(x),u(y))^p-d_Y(u(x),y_0)^p-d_Y(u(y),y_0)^p\bigr] \,\dd\Pi.
\end{align*}
The first integral equals
$\int_X(a+b)d_Y(u,y_0)^p\,\dd\mu$ by the marginal identities. On
$K\times K$, the triangle inequality gives
\[
 d_Y(u(x),u(y))^p\le2^pL^p.
\]
Thus the absolute value of the bracket in \eqref{eq:R-compact} is bounded
by $2^{p+1}L^p$, which gives \eqref{eq:R-bound}.
\end{proof}

\begin{theorem}
\label{thm:abstract-decoupling}
Assume that $Y$ contains at least two points. Let
$a_\alpha,b_\alpha,a_\infty,b_\infty\in L^\infty_+(X,\mu)$, indexed by a
directed set, and suppose that
\begin{equation}\label{eq:marginal-assumptions}
 \sup_\alpha\bigl(\|a_\alpha\|_\infty+\|b_\alpha\|_\infty\bigr)<\infty,
 \qquad
 a_\alpha\stackrel{*}{\rightharpoonup}a_\infty,
 \qquad
 b_\alpha\stackrel{*}{\rightharpoonup}b_\infty
 \quad\text{in }L^\infty(X,\mu).
\end{equation}
Here weak-* convergence means convergence of the integrals against every
function in $L^1(X,\mu)$.
Let $(\Pi_\alpha)_\alpha$ be a family of
$(a_\alpha,b_\alpha)$-marginal couplings. Then the following statements are
equivalent:
\begin{enumerate}[label=\textup{(\roman*)}]
\item \label{item:escape-iff}
$\Pi_\alpha$ escapes from bounded rectangles.
\item \label{item:allmaps-iff}
For every $u\in L^p(X;Y)$,
\begin{equation}\label{eq:abstract-decoupling}
 J_{\Pi_\alpha}(u)
 \longrightarrow
 \int_X(a_\infty+b_\infty)d_Y(u,y_0)^p\,\dd\mu.
\end{equation}
\item \label{item:twovalued-iff}
Fix $y_1\in Y\setminus\{y_0\}$.  For every bounded Borel set
$K\subset X$, the map
\[
 u_K(x):=\begin{cases}y_1,&x\in K,\\y_0,&x\notin K,\end{cases}
\]
verifies \eqref{eq:abstract-decoupling}.
\end{enumerate}
\end{theorem}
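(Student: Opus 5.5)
I would prove the cycle \ref{item:escape-iff}$\Rightarrow$\ref{item:allmaps-iff}$\Rightarrow$\ref{item:twovalued-iff}$\Rightarrow$\ref{item:escape-iff}. The middle implication is trivial: $u_K$ lies in $L^p(X;Y)$, since $M_p(u_K)=d_Y(y_1,y_0)^p\mu(K)<\infty$ because $\mu$ is finite on bounded sets.

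For \ref{item:escape-iff}$\Rightarrow$\ref{item:allmaps-iff}, first take $v$ with $v=y_0$ outside a compact set $K$ and $d_Y(v,y_0)\le L$. Proposition~\ref{prop:compact-decoupling} gives
\[
 J_{\Pi_\alpha}(v)=\int_X(a_\alpha+b_\alpha)d_Y(v,y_0)^p\,\dd\mu+R_{\Pi_\alpha}(v;K),
\]
with $|R_{\Pi_\alpha}(v;K)|\le 2^{p+1}L^p\Pi_\alpha(K\times K)\to0$ by escape. The function $d_Y(v,y_0)^p$ lies in $L^1(X,\mu)$ (it is bounded and supported in $K$), so weak-* convergence of $a_\alpha$ and $b_\alpha$ gives convergence of the main term to $\int(a_\infty+b_\infty)d_Y(v,y_0)^p\,\dd\mu$. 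For general $u$, apply Lemma~\ref{lem:truncation} to get $u_j\to u$ in $d_p$.

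The $\varepsilon/3$ argument is then run at the level of $p$-th roots. Lemma~\ref{lem:coupling-stability} gives
\[
 |J_{\Pi_\alpha}(u)^{1/p}-J_{\Pi_\alpha}(u_j)^{1/p}|\le C\,d_p(u_j,u),
\]
with $C=2\sup_\alpha(\|a_\alpha\|_\infty+\|b_\alpha\|_\infty)^{1/p}$ uniform in $\alpha$. On the limit side, set $F(u):=\bigl(\int(a_\infty+b_\infty)d_Y(u,y_0)^p\,\dd\mu\bigr)^{1/p}$. This is a weighted $L^p$ norm of $d_Y(u,y_0)$, and $|d_Y(u,y_0)-d_Y(u_j,y_0)|\le d_Y(u,u_j)$, so $|F(u)-F(u_j)|\le\|a_\infty+b_\infty\|_\infty^{1/p}d_p(u,u_j)$. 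We need $a_\infty,b_\infty$ bounded, which holds since they lie in $L^\infty$ by hypothesis; their norms are also controlled by the uniform bound via weak-* lower semicontinuity. Taking $\limsup_\alpha$ and then $j\to\infty$ gives $J_{\Pi_\alpha}(u)^{1/p}\to F(u)$, which is the claim.

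For \ref{item:twovalued-iff}$\Rightarrow$\ref{item:escape-iff}, fix a bounded Borel $K$ and set $\delta:=d_Y(y_1,y_0)>0$. The map $u_K$ satisfies $d_Y(u_K(x),u_K(y))^p=\delta^p\one_{K\times K^c\cup K^c\times K}$, so exactly
\[
 J_{\Pi_\alpha}(u_K)=\delta^p\bigl[\Pi_\alpha(K\times X)+\Pi_\alpha(X\times K)-2\Pi_\alpha(K\times K)\bigr]
 =\delta^p\Bigl[\int_K(a_\alpha+b_\alpha)\,\dd\mu-2\Pi_\alpha(K\times K)\Bigr].
\]
This is just Proposition~\ref{prop:compact-decoupling} with an explicit remainder. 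Since $\one_K\in L^1(\mu)$, the first term converges to $\delta^p\int_K(a_\infty+b_\infty)\,\dd\mu$. Assumption \ref{item:twovalued-iff} says $J_{\Pi_\alpha}(u_K)$ converges to the same number, so $\Pi_\alpha(K\times K)\to0$. All quantities are finite, so the subtraction is legitimate.

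The main obstacle is the density step in \ref{item:escape-iff}$\Rightarrow$\ref{item:allmaps-iff}: one must check that the stability constant is uniform along the net and that the limit functional is $d_p$-continuous. Both follow from the uniform $L^\infty$ bounds, and working with $p$-th roots keeps the case $p=1$ included without change.
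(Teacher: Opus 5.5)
Your proposal is correct and follows the paper's proof essentially step by step. It uses the same cycle of implications, the truncation of Lemma~\ref{lem:truncation} combined with Proposition~\ref{prop:compact-decoupling} and the uniform stability estimate of Lemma~\ref{lem:coupling-stability} (run at the level of $p$-th roots), and the same explicit computation of $J_{\Pi_\alpha}(u_K)$ for the converse, with your constant $2\sup_\alpha(\|a_\alpha\|_\infty+\|b_\alpha\|_\infty)^{1/p}$ being a cruder but equally uniform version of the paper's bound.
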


\begin{proof}
We first prove \ref{item:escape-iff}$\Rightarrow$\ref{item:allmaps-iff}.
Let $u_j$ be the truncations from Lemma~\ref{lem:truncation}.  For fixed $j\in \mathbb{N}$,
$u_j$ equals $y_0$ outside a compact set $K_j$ and $d_Y(u_j(x),y_0)\leq j$ on $X$.  By
Proposition~\ref{prop:compact-decoupling},
\[
 J_{\Pi_\alpha}(u_j)
 =\int_X(a_\alpha+b_\alpha)d_Y(u_j,y_0)^p\,\dd\mu+R_{\alpha,j},
 \qquad
 |R_{\alpha,j}|\le C_{p,j}\Pi_\alpha(K_j\times K_j)\to0.
\]
Here $C_{p,j}:=2^{p+1}j^p$, and the convergence follows from
Remark~\ref{compact}.
Since $d_Y(u_j,y_0)^p\in L^1(X,\mu)$, weak-* convergence gives
\begin{equation*}
 J_{\Pi_\alpha}(u_j)
 \longrightarrow
 \int_X(a_\infty+b_\infty)d_Y(u_j,y_0)^p\,\dd\mu.
\end{equation*}
By Lemma~\ref{lem:coupling-stability} and
\eqref{eq:marginal-assumptions}, there is a constant $C>0$ independent of
$\alpha$ and $j$ such that
\begin{equation*}
 |J_{\Pi_\alpha}(u)^{1/p}-J_{\Pi_\alpha}(u_j)^{1/p}|
 \le C d_p(u,u_j).
\end{equation*}
Set
\[
 J_\infty(v):=\int_X(a_\infty+b_\infty)d_Y(v,y_0)^p\,\dd\mu.
\]
The reverse triangle inequality in
$L^p(X,(a_\infty+b_\infty)\mu;Y)$ gives
\[
 |J_\infty(u)^{1/p}-J_\infty(u_j)^{1/p}|
 \le
 \|a_\infty+b_\infty\|_\infty^{1/p}d_p(u,u_j).
\]
Now
\begin{align*}
&|J_{\Pi_\alpha}(u)^{1/p}-J_\infty(u)^{1/p}|\\
&\leq |J_{\Pi_\alpha}(u)^{1/p}-J_{\Pi_\alpha}(u_j)^{1/p}|\\
&\quad+|J_\infty(u_j)^{1/p}-J_\infty(u)^{1/p}|\\
&\quad+|J_{\Pi_\alpha}(u_j)^{1/p}-J_\infty(u_j)^{1/p}|.
\end{align*}
Taking first the limit in $\alpha$ and then letting $j\to\infty$ proves
\[
J_{\Pi_\alpha}(u)^{1/p}\longrightarrow J_\infty(u)^{1/p}.
\]
Thus \eqref{eq:abstract-decoupling} holds.

The implication \ref{item:allmaps-iff}$\Rightarrow$\ref{item:twovalued-iff}
is immediate because $\mu(K)<\infty$ for bounded $K$, hence
$u_K\in L^p(X;Y)$.

It remains to prove
\ref{item:twovalued-iff}$\Rightarrow$\ref{item:escape-iff}. Set
\[
c:=d_Y(y_0,y_1)>0.
\]
By the definition of $u_K$,
\begin{align*}
J_{\Pi_\alpha}(u_K)
&=c^p\bigl[
   \Pi_\alpha(K\times K^c)
   +\Pi_\alpha(K^c\times K)
  \bigr]\\
&=c^p\bigl[
   \Pi_\alpha(K\times X)
   +\Pi_\alpha(X\times K)
   -2\Pi_\alpha(K\times K)
  \bigr]\\
&=c^p\bigl[
   \int_K(a_\alpha+b_\alpha)\,\dd\mu
   -2\Pi_\alpha(K\times K)
  \bigr].
\end{align*}
On the other hand,
\[
M_p(u_K)=c^p\mu(K).
\]
Therefore, assumption~$(iii)$ and weak-* convergence, tested against
$\one_K\in L^1(X,\mu)$, give
\[
c^p\bigl[
   \int_K(a_\alpha+b_\alpha)\,\dd\mu
   -2\Pi_\alpha(K\times K)
  \bigr]
\longrightarrow
c^p\int_K(a_\infty+b_\infty)\,\dd\mu.
\]
It follows that
\[
2c^p\Pi_\alpha(K\times K)
=
c^p\int_K(a_\alpha+b_\alpha)\,\dd\mu-J_{\Pi_\alpha}(u_K)
\longrightarrow 0.
\]
Since $c>0$, we conclude that
\[
\Pi_\alpha(K\times K)\longrightarrow0,
\]
and hence $(\Pi_\alpha)_\alpha$ escapes from bounded rectangles. 
\end{proof}

\begin{remark}
If $a_\alpha$ and $b_\alpha$ are constant functions, the theorem reduces to
the constant-density case.
\end{remark}

\section{Target tail laws, energy profiles and localized interactions}
\label{sec:tail-laws}

The preceding theorem concerns globally $L^p$ maps. We now treat the regime
in which the map may carry a nontrivial state at infinity and
need only be $L^p$ on a bounded region of $X$.
Throughout this section $(Y,d_Y)$ is assumed to be Polish\footnote{A metric
space is called Polish if it is complete and separable.}. For
$1\le p<\infty$, let $\Pp(Y)$ denote the Borel
probability measures $\lambda$ on $Y$ with finite $p$-moment relative to
$y_0$:
\[
 \int_Y d_Y(y,y_0)^p\,\dd\lambda(y)<\infty.
\]
Let $\Wp$ be the $p$-Wasserstein distance
\begin{equation*}
 \Wp(\lambda,\eta)^p
 :=\inf_{\gamma\in\Gamma(\lambda,\eta)}
   \int_{Y\times Y}d_Y(z,w)^p\,\dd\gamma(z,w).
\end{equation*}
Here $\Gamma(\lambda,\eta)$ denotes the set of probability measures on
$Y\times Y$ whose first marginal is $\lambda$ and whose second marginal is
$\eta$.

Let $\Omega\subset X$ be a bounded Borel set with
$\mu(\Omega)>0$.  Let $(K_\varepsilon)_{\varepsilon>0}$ be a family of finite positive
\emph{Borel kernels from $\Omega$ to $X$}; that is, for every $\varepsilon>0$
and every Borel set $A\subset X$, the map
\[
x\longmapsto K_\varepsilon(x,A)
\]
is Borel measurable, while $K_\varepsilon(x,\cdot)$ is a finite positive
Borel measure on $X$ for every $x\in\Omega$.
Write
\begin{equation*}
 m_\varepsilon(x):=K_\varepsilon(x,X)<\infty.
\end{equation*}
Let $u:X\to Y$ be Borel measurable.  On the set where
$m_\varepsilon(x)>0$ define the \emph{normalized target law seen from $x$} by
\begin{equation*}
 \lambda_{\varepsilon,x}
 :=u_\#\left(\frac{K_\varepsilon(x,\cdot)}{m_\varepsilon(x)}\right),
\end{equation*}
and set $\lambda_{\varepsilon,x}:=\delta_{y_0}$ whenever
$m_\varepsilon(x)=0$.  Thus, if $m_\varepsilon(x)>0$, then for every Borel $\psi:Y\to[0,\infty]$,
\[
 \int_Y\psi(z)\,\dd\lambda_{\varepsilon,x}(z)
 =\frac1{m_\varepsilon(x)}
   \int_X\psi(u(y))\,K_\varepsilon(x,\dd y).
\]
Since $Y$ is Polish, $x\mapsto\lambda_{\varepsilon,x}$ is a Borel
probability kernel (viewed as a map into $\mathcal P(Y)$ with its weak
Borel structure); see, e.g., \cite[Proposition 7.26]{BertsekasShreve1978}. In particular, the function
$x\mapsto\Wp(\lambda_{\varepsilon,x},\lambda)^p$ is measurable for
every fixed $\lambda\in\Pp(Y)$, see e.g. \cite[Proposition 7.1.3]{AmbrosioGigliSavare2008}.

\begin{definition}
\label{def:metric-tail-law}
Let $u:X\to Y$ be Borel measurable with $u|_\Omega\in L^p(\Omega;Y)$ and
$\lambda_{\varepsilon,x}\in\Pp(Y)$ for almost every $x\in \Omega$.  A measure
$\lambda_\infty\in\Pp(Y)$ is called an \emph{$L^p$ target tail law} for the family
$K_\varepsilon$ if
\begin{equation}\label{eq:integrated-Wp-tail}
 \int_\Omega
 \Wp(\lambda_{\varepsilon,x},\lambda_\infty)^p
 \,\dd\mu(x)\longrightarrow0\quad \mbox{as}\ \varepsilon \to 0^+
\end{equation}
If, in addition, 
\begin{equation*}
 \operatorname*{ess\,sup}_{\substack{x\in\Omega\\m_\varepsilon(x)>0}}
 \frac{K_\varepsilon(x,B)}{m_\varepsilon(x)}
 \longrightarrow0 \quad \mbox{as}\ \varepsilon \to 0^+
 \qquad \forall\text{ bounded Borel }B\subset X,
\end{equation*}
we say that $\lambda_\infty$ is a \emph{target law \emph{at infinity}}.
\end{definition}

For $a\in Y$ set
\begin{equation}\label{eq:kernel-energy-profile}
 P_{\varepsilon,x}(a)
 :=\left(\int_Yd_Y(a,z)^p\,\dd\lambda_{\varepsilon,x}(z)\right)^{1/p},
 \qquad
 P_\infty(a)
 :=\left(\int_Yd_Y(a,z)^p\,\dd\lambda_\infty(z)\right)^{1/p}.
\end{equation}
We say that the target laws converge to $\lambda_\infty$ in
\emph{$p$-energy profile} on $\Omega$ if
\begin{equation}\label{eq:integrated-profile-tail}
 P_{\varepsilon,\cdot}(a)\longrightarrow P_\infty(a)
 \quad\text{in }L^p(\Omega)
 \qquad\text{for every }a\in Y.
\end{equation}
This condition records the family of costs tested by the energy and need not
determine the limiting probability measure uniquely. Since all the functions in \eqref{eq:kernel-energy-profile} are
$1$-Lipschitz in $a$, it is equivalent to require
\eqref{eq:integrated-profile-tail} only on a fixed countable dense subset of
$Y$.

\begin{lemma}
\label{lem:distance-profile-Wp}
For $a\in Y$ and $\lambda,\eta\in\Pp(Y)$  the following inequality holds
\begin{equation*}
 \left|\left(\int_Yd_Y(a,z)^p\,\dd\lambda(z)\right)^{1/p}-\left(\int_Yd_Y(a,z)^p\,\dd\eta(z)\right)^{1/p}\right|\le\Wp(\lambda,\eta).
\end{equation*}
\end{lemma}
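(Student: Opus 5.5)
The plan is to realize both profiles on a common probability space through a coupling, and then apply Minkowski's inequality in $L^p$ of that coupling. Fix an arbitrary $\gamma\in\Gamma(\lambda,\eta)$. Since $\gamma$ has first marginal $\lambda$ and second marginal $\eta$,
\[
\Bigl(\int_Y d_Y(a,z)^p\,\dd\lambda(z)\Bigr)^{1/p}=\Bigl(\int_{Y\times Y} d_Y(a,z)^p\,\dd\gamma(z,w)\Bigr)^{1/p},
\]
and the analogous identity holds for $\eta$, with the integrand $d_Y(a,w)^p$. Both quantities are finite because $\lambda,\eta\in\Pp(Y)$ and $d_Y(a,z)\le d_Y(a,y_0)+d_Y(y_0,z)$. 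Hence the left-hand side of the lemma is $\bigl|\,\|f\|_{L^p(\gamma)}-\|g\|_{L^p(\gamma)}\bigr|$, where $f(z,w)=d_Y(a,z)$ and $g(z,w)=d_Y(a,w)$.

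By the reverse triangle inequality in $L^p(Y\times Y,\gamma)$, this is at most $\|f-g\|_{L^p(\gamma)}$. Pointwise, the metric triangle inequality gives
\[
|d_Y(a,z)-d_Y(a,w)|\le d_Y(z,w).
\]
Combining the two, the left-hand side is bounded by $\bigl(\int d_Y(z,w)^p\,\dd\gamma\bigr)^{1/p}$. Since $\gamma\in\Gamma(\lambda,\eta)$ was arbitrary, taking the infimum over all couplings yields $\Wp(\lambda,\eta)$, which proves the claim.

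No step is a real obstacle. The only technical point is to check that the reverse Minkowski inequality is applied to finite $L^p$ norms, which is exactly what the finite $p$-moment assumption provides. The argument works verbatim for $p=1$, and an optimal coupling is not needed, since only the infimum is used.
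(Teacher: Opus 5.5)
Your proposal is correct and follows the paper's proof exactly: fix a coupling $\gamma\in\Gamma(\lambda,\eta)$, apply the reverse triangle inequality in $L^p(Y\times Y,\gamma)$, bound $|d_Y(a,z)-d_Y(a,w)|\le d_Y(z,w)$ pointwise, and take the infimum over $\gamma$. Your check that both profiles are finite, using the finite $p$-moments and $d_Y(a,z)\le d_Y(a,y_0)+d_Y(y_0,z)$, is a small point the paper leaves implicit.
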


\begin{proof}
Let $\gamma\in\Gamma(\lambda,\eta)$.  The reverse triangle inequality in
$L^p(Y\times Y,\gamma)$ and the triangle inequality give
\begin{align*}
 \left|\left(\int_Yd_Y(a,z)^p\,\dd\lambda(z)\right)^{1/p}-\left(\int_Yd_Y(a,z)^p\,\dd\eta(z)\right)^{1/p}\right|
 &\le
 \left(\int
 |d_Y(a,z)-d_Y(a,w)|^p\,\dd\gamma(z,w)\right)^{1/p}\\
 &\le
 \left(\int d_Y(z,w)^p\,\dd\gamma(z,w)\right)^{1/p}.
\end{align*}
The conclusion follows taking the infimum over $\gamma$.
\end{proof}

\begin{theorem}
\label{thm:metric-tail-law}
Let $u:X\to Y$ be Borel measurable with $u|_\Omega\in L^p(\Omega;Y)$.  Assume
that $\lambda_{\varepsilon,x}\in\Pp(Y)$ a.e. $x\in \Omega$, that
\eqref{eq:integrated-profile-tail} holds for some
$\lambda_\infty\in\Pp(Y)$, and that
\begin{equation*}
 \sup_\varepsilon\|m_\varepsilon\|_{L^\infty(\Omega)}<\infty,
 \qquad
 m_\varepsilon\longrightarrow m
 \quad\text{in }L^\infty(\Omega).
\end{equation*}
Then
\begin{equation*}
 \begin{aligned}
 &\lim_{\varepsilon\to0^+}
 \int_\Omega\int_Xd_Y(u(x),u(y))^p
 \,K_\varepsilon(x,\dd y)\,\dd\mu(x)\\
 &\qquad=
 \int_\Omega m(x)
 \int_Yd_Y(u(x),z)^p\,\dd\lambda_\infty(z)
 \,\dd\mu(x).
 \end{aligned}
\end{equation*}
\end{theorem}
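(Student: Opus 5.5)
The plan is to rewrite the energy through the target laws, so that the problem becomes convergence of $\int_\Omega m_\varepsilon(x)P_{\varepsilon,x}(u(x))^p\,\dd\mu(x)$, and then to replace the $x$-dependent point $u(x)$ by finitely many fixed points of $Y$.

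\emph{Step 1: rewriting the energy.} By the definition of $\lambda_{\varepsilon,x}$, for every $x\in\Omega$ with $m_\varepsilon(x)>0$,
\[
 \int_X d_Y(u(x),u(y))^p\,K_\varepsilon(x,\dd y)
 = m_\varepsilon(x)\,P_{\varepsilon,x}(u(x))^p .
\]
Both sides vanish when $m_\varepsilon(x)=0$. Measurability of $x\mapsto P_{\varepsilon,x}(u(x))$ follows from joint measurability of $(x,a)\mapsto P_{\varepsilon,x}(a)$. This map is Borel in $x$ and $1$-Lipschitz in $a$, hence Carathéodory, and $Y$ is separable. The energy is therefore $\int_\Omega m_\varepsilon P_{\varepsilon,x}(u(x))^p\,\dd\mu$. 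The target is $\int_\Omega m\,P_\infty(u(x))^p\,\dd\mu$.

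\emph{Step 2: reduction to countably valued maps.} Fix a countable dense set $\{a_k\}\subset Y$ and $\delta>0$. Choose a Borel map $v:\Omega\to Y$ taking values in $\{a_1,\dots,a_N\}$ on a set $\Omega_N$, with $d_Y(u,v)<\delta$ on $\Omega_N$. On $\Omega\setminus\Omega_N$ set $v=y_0$, where we take $y_0=a_1$ without loss of generality. Choose $N$ so large that $\int_{\Omega\setminus\Omega_N}d_Y(u,y_0)^p\,\dd\mu<\delta^p$. Then
\[
\|d_Y(u,v)\|_{L^p(\Omega)}\le \delta\,\mu(\Omega)^{1/p}+\delta .
\]
Set $F_\varepsilon(x):=P_{\varepsilon,x}(u(x))$, $G_\varepsilon(x):=P_{\varepsilon,x}(v(x))$, and analogously $F_\infty,G_\infty$ with $P_\infty$. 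The $1$-Lipschitz property in $a$ gives $|F_\varepsilon-G_\varepsilon|\le d_Y(u,v)$ pointwise, uniformly in $\varepsilon$, and likewise for $F_\infty-G_\infty$. Multiplying by $m_\varepsilon^{1/p}\le C^{1/p}$ and using Minkowski in $L^p(\Omega)$, we get
\[
\bigl\|m_\varepsilon^{1/p}F_\varepsilon-m_\varepsilon^{1/p}G_\varepsilon\bigr\|_{L^p(\Omega)}
\le C'\delta,
\]
uniformly in $\varepsilon$, and the same bound for the $\infty$ quantities.

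\emph{Step 3: convergence for the simple map.} On each of the finitely many Borel pieces $\{v=a_k\}$, the profile hypothesis \eqref{eq:integrated-profile-tail} gives $P_{\varepsilon,\cdot}(a_k)\to P_\infty(a_k)$ in $L^p$. Summing over the pieces, $G_\varepsilon\to G_\infty$ in $L^p(\Omega)$.

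Next I would show $m_\varepsilon^{1/p}\to m^{1/p}$ uniformly. This follows from $L^\infty$ convergence and the uniform continuity of $t\mapsto t^{1/p}$ on $[0,C]$. Note that $G_\infty=P_\infty(v)$ lies in $L^p(\Omega)$, since $P_\infty(a)\le d_Y(a,y_0)+P_\infty(y_0)$ and $\lambda_\infty\in\Pp(Y)$. Hence $m_\varepsilon^{1/p}G_\varepsilon\to m^{1/p}G_\infty$ in $L^p(\Omega)$.

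\emph{Step 4: conclusion.} A three-term triangle inequality in $L^p(\Omega)$, as in the proof of Theorem~\ref{thm:abstract-decoupling}, gives
\[
\limsup_{\varepsilon\to0^+}\bigl\|m_\varepsilon^{1/p}F_\varepsilon-m^{1/p}F_\infty\bigr\|_{L^p(\Omega)}\le 2C'\delta .
\]
Letting $\delta\to0$, the $L^p$ norms converge. Raising to the power $p$ yields the claimed limit.

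The main obstacle I expect is Step 2. One has to construct the countably valued approximation measurably with a controlled $L^p$ error. One must also confirm that $F_\varepsilon$ is finite a.e., which holds because $\lambda_{\varepsilon,x}\in\Pp(Y)$ a.e. I would handle the construction by taking $v(x)=a_{k(x)}$ with $k(x)$ the first index such that $d_Y(u(x),a_k)<\delta$. This first-index choice is Borel, and it is the only place where separability of $Y$ is used.
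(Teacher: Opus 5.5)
Your proposal is correct and follows essentially the paper's route. Like the paper, you rewrite the energy as $\int_\Omega m_\varepsilon P_{\varepsilon,x}(u(x))^p\,\dd\mu$, approximate $u$ by finite-valued maps, combine the $1$-Lipschitz dependence of the profiles on $a$ with the profile hypothesis at finitely many points, and close with a three-term triangle inequality. The only difference is cosmetic: you fold $m_\varepsilon^{1/p}$ into the $L^p$ functions and conclude by convergence of norms, whereas the paper first shows $A_\varepsilon\to A_\infty$ in $L^p$, upgrades this to $A_\varepsilon^p\to A_\infty^p$ in $L^1$ via $|a^p-b^p|\le p|a-b|(a^{p-1}+b^{p-1})$ and H\"older, and then handles $m_\varepsilon$ separately.
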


\begin{proof}
For a.e. $x\in\Omega$, the maps $a\mapsto P_{\varepsilon,x}(a)$ and
$a\mapsto P_\infty(a)$ are $1$-Lipschitz.  Indeed, this follows from the
reverse triangle inequality in $L^p$ applied to the functions
$z\mapsto d_Y(a,z)$ and $z\mapsto d_Y(b,z)$.  Let $(u_j)_j$ be a sequence
of finite-valued Borel maps such that
\begin{equation}\label{eq:simple-map-approximation}
 \int_\Omega d_Y(u_j(x),u(x))^p\,\dd\mu(x)\longrightarrow0.
\end{equation}
Such a sequence exists because $Y$ is separable and
$u|_\Omega\in L^p(\Omega;Y)$.

If $u_j$ takes the values $a_{j,1},\dots,a_{j,N_j}$, then
\eqref{eq:integrated-profile-tail}, applied to this finite family, gives
\[
 P_{\varepsilon,\cdot}(u_j(\cdot))
 \longrightarrow P_\infty(u_j(\cdot))
 \quad\text{in }L^p(\Omega).
\]
The Lipschitz estimate therefore yields
\begin{align*}
 &\|P_{\varepsilon,\cdot}(u(\cdot))-P_\infty(u(\cdot))\|_{L^p(\Omega)}\\
 &\quad\le
 2\|d_Y(u,u_j)\|_{L^p(\Omega)}
 +\|P_{\varepsilon,\cdot}(u_j(\cdot))
       -P_\infty(u_j(\cdot))\|_{L^p(\Omega)}.
\end{align*}
First letting $\varepsilon\to0$ and then $j\to\infty$ proves
\begin{equation}\label{eq:Aeps-Ainf-Lp}
 A_\varepsilon:=P_{\varepsilon,\cdot}(u(\cdot))
 \longrightarrow
 A_\infty:=P_\infty(u(\cdot))
 \quad\text{in }L^p(\Omega).
\end{equation}
Moreover,
\[
 A_\infty(x)
 \le d_Y(u(x),y_0)+P_\infty(y_0),
\]
so $A_\infty\in L^p(\Omega)$ and
$\sup_\varepsilon\|A_\varepsilon\|_{L^p(\Omega)}<\infty$ for small
$\varepsilon$.

We claim that
\begin{equation*}%
 A_\varepsilon^p\longrightarrow A_\infty^p
 \quad\text{in }L^1(\Omega).
\end{equation*}
For $p=1$ this is exactly \eqref{eq:Aeps-Ainf-Lp}.  If $p>1$, then using
$|a^p-b^p|\le p|a-b|(a^{p-1}+b^{p-1})$ and the H\"older inequality we get
\begin{align*}
 \|A_\varepsilon^p-A_\infty^p\|_{L^1}
 &\le p\|A_\varepsilon-A_\infty\|_{L^p}
 \bigl(\|A_\varepsilon\|_{L^p}^{p-1}
       +\|A_\infty\|_{L^p}^{p-1}\bigr)\longrightarrow0.
\end{align*}
We claim that
\begin{align}\label{eq:kernel-profile-identity}
 \int_Xd_Y(u(x),u(y))^pK_\varepsilon(x,\dd y)
 =m_\varepsilon(x)A_\varepsilon(x)^p.
 \end{align}
Indeed, if $m_{\varepsilon}(x)=0$ then $K_\varepsilon(x,\cdot)=0$ and the equality trivially follows. If $m_{\varepsilon}(x)>0$ then
\begin{align*}
\int_Xd_Y(u(x),u(y))^pK_\varepsilon(x,\dd y)&=m_{\varepsilon}(x)\int_Xd_Y(u(x),u(y))^p\frac{K_\varepsilon(x,\dd y)}{m_{\varepsilon}(x)}\\
&=m_{\varepsilon}(x)\int_Y d_Y(u(x),z)^p\,\dd\lambda_{\varepsilon,x}(z)\\
&=m_{\varepsilon}(x)A_{\varepsilon}(x)^p.
\end{align*}
Consequently,
\begin{align*}
 &\left|\int_\Omega m_\varepsilon A_\varepsilon^p\,\dd\mu
 -\int_\Omega mA_\infty^p\,\dd\mu\right|\\
 &\qquad\le
 \|m_\varepsilon\|_\infty
 \|A_\varepsilon^p-A_\infty^p\|_{L^1}
 +\|m_\varepsilon-m\|_\infty
 \|A_\infty^p\|_{L^1}\longrightarrow0.
\end{align*}
Together with \eqref{eq:kernel-profile-identity}, this concludes the proof.
\end{proof}

\begin{corollary}
\label{cor:Wasserstein-tail-law}
Under the remaining assumptions of Theorem~\ref{thm:metric-tail-law}, the
integrated Wasserstein condition \eqref{eq:integrated-Wp-tail} implies the
conclusion of that theorem.
\end{corollary}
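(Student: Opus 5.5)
The plan is to reduce the corollary to Theorem~\ref{thm:metric-tail-law}. It suffices to show that the integrated Wasserstein condition \eqref{eq:integrated-Wp-tail} implies convergence in $p$-energy profile \eqref{eq:integrated-profile-tail}. All other hypotheses of that theorem are assumed in the corollary: $u|_\Omega\in L^p(\Omega;Y)$, $\lambda_{\varepsilon,x}\in\Pp(Y)$ for a.e.\ $x$, and the uniform bound and $L^\infty$ convergence of $m_\varepsilon$. Once \eqref{eq:integrated-profile-tail} is established, the theorem gives the stated limit directly.

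Fix $a\in Y$. For a.e.\ $x\in\Omega$, both $\lambda_{\varepsilon,x}$ and $\lambda_\infty$ lie in $\Pp(Y)$. Lemma~\ref{lem:distance-profile-Wp} then gives the pointwise bound
\[
 \abs{P_{\varepsilon,x}(a)-P_\infty(a)}\le \Wp(\lambda_{\varepsilon,x},\lambda_\infty).
\]
Raising this to the $p$-th power and integrating over $\Omega$ yields
\[
 \|P_{\varepsilon,\cdot}(a)-P_\infty(a)\|_{L^p(\Omega)}^p
 \le\int_\Omega \Wp(\lambda_{\varepsilon,x},\lambda_\infty)^p\,\dd\mu(x)\longrightarrow0 .
\]
This is \eqref{eq:integrated-profile-tail} for every $a$.

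The only point needing care is that the functions involved are measurable and that $P_\infty(a)$ belongs to $L^p(\Omega)$.
\begin{itemize}
\item $P_\infty(a)$ is a finite constant, since $\lambda_\infty\in\Pp(Y)$ and $d_Y(a,z)\le d_Y(a,y_0)+d_Y(z,y_0)$. It is therefore in $L^p(\Omega)$ because $\mu(\Omega)<\infty$, as $\Omega$ is bounded.
\item $x\mapsto P_{\varepsilon,x}(a)^p=m_\varepsilon(x)^{-1}\int_X d_Y(a,u(y))^pK_\varepsilon(x,\dd y)$ is Borel by the kernel measurability, with the obvious modification where $m_\varepsilon=0$.
\item Measurability of $x\mapsto \Wp(\lambda_{\varepsilon,x},\lambda_\infty)^p$ was already noted in the paper.
\end{itemize}
Hence the integral inequality is meaningful, and the corollary follows by invoking Theorem~\ref{thm:metric-tail-law}.
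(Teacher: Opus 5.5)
Your proposal is correct and is the paper's argument: apply Lemma~\ref{lem:distance-profile-Wp} pointwise, take the $L^p(\Omega)$ norm to obtain \eqref{eq:integrated-profile-tail}, and then invoke Theorem~\ref{thm:metric-tail-law}. Your measurability and integrability checks are left implicit in the paper and are a harmless addition.
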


\begin{proof}
The condition \eqref{eq:integrated-Wp-tail} implies
\eqref{eq:integrated-profile-tail}. Indeed,
for every $a\in Y$, Lemma~\ref{lem:distance-profile-Wp} gives
\[
 |P_{\varepsilon,x}(a)-P_\infty(a)|
 \le W_p(\lambda_{\varepsilon,x},\lambda_\infty)
\]
and the conclusion follows taking the $L^p(\Omega)$ norm.
\end{proof}

\subsection{The real-valued quadratic case}

If $Y=\mathbb{R}$ and $p=2$ one can weaken the convergence of the whole target law
to convergence of only three real-valued functions.

\begin{theorem}
\label{thm:moment-only}
Let $u:X\to\mathbb R$ be measurable with $u|_\Omega\in L^2(\Omega)$.
For finite positive kernels $K_\varepsilon(x,\dd y)$ define:
\begin{align*}
 m_\varepsilon(x)&:=\int_XK_\varepsilon(x,\dd y),\\
 b_\varepsilon(x)&:=\int_Xu(y)K_\varepsilon(x,\dd y),\\
 c_\varepsilon(x)&:=\int_Xu(y)^2K_\varepsilon(x,\dd y),
\end{align*}
whenever these quantities are finite.  Assume
\[
 m_\varepsilon\to m\quad\text{in }L^\infty(\Omega),
 \qquad
 b_\varepsilon\to b\quad\text{in }L^2(\Omega),
 \qquad
 c_\varepsilon\to c\quad\text{in }L^1(\Omega).
\]
Then
\begin{equation*}
 \int_\Omega\int_X|u(x)-u(y)|^2K_\varepsilon(x,\dd y)
 \,\dd\mu(x)
 \longrightarrow
 \int_\Omega\bigl(mu^2-2bu+c\bigr)\,\dd\mu.
\end{equation*}
In particular, if $m,b,c$ are constants $M_0,M_1,M_2$, the limit can be written as
\[
 M_0\int_\Omega u^2\,\dd\mu
 -2M_1\int_\Omega u\,\dd\mu
 +M_2\mu(\Omega).
\]
\end{theorem}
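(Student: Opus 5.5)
The plan is to expand the square pointwise and pass to the limit term by term, with no approximation or profile argument needed. For a.e. $x\in\Omega$ where $m_\varepsilon(x),b_\varepsilon(x),c_\varepsilon(x)$ are finite, the identity $|u(x)-u(y)|^2=u(x)^2-2u(x)u(y)+u(y)^2$ is integrated against the finite measure $K_\varepsilon(x,\dd y)$. Each of the three terms is integrable: $u(y)^2$ has integral $c_\varepsilon(x)<\infty$, and $|u(y)|\le \tfrac12(1+u(y)^2)$ is therefore integrable as well. This gives the exact identity
\[
 \int_X|u(x)-u(y)|^2K_\varepsilon(x,\dd y)
 =m_\varepsilon(x)u(x)^2-2b_\varepsilon(x)u(x)+c_\varepsilon(x).
\]
It is the quadratic analogue of \eqref{eq:kernel-profile-identity}. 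Measurability in $x$ follows from the kernel being Borel, approximating the integrands by simple functions.

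Next I integrate over $\Omega$ with respect to $\mu$ and treat the three terms separately. For the first term, $u^2\in L^1(\Omega)$ and $m_\varepsilon\to m$ in $L^\infty(\Omega)$, so
\[
 \Bigl|\int_\Omega (m_\varepsilon-m)u^2\,\dd\mu\Bigr|\le\|m_\varepsilon-m\|_{L^\infty(\Omega)}\|u\|_{L^2(\Omega)}^2\to0.
\]
For the middle term I use Cauchy--Schwarz: $\bigl|\int_\Omega (b_\varepsilon-b)u\,\dd\mu\bigr|\le\|b_\varepsilon-b\|_{L^2(\Omega)}\|u\|_{L^2(\Omega)}\to0$. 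The last term converges by the hypothesis $c_\varepsilon\to c$ in $L^1(\Omega)$. Adding the three limits gives $\int_\Omega(mu^2-2bu+c)\,\dd\mu$. When $m,b,c$ are the constants $M_0,M_1,M_2$, the integral splits directly into the displayed expression.

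The only delicate point is that the pointwise identity needs finiteness of the moments at a.e. $x$. The hypotheses supply this, since the convergence statements for small $\varepsilon$ presuppose $b_\varepsilon\in L^2(\Omega)$ and $c_\varepsilon\in L^1(\Omega)$, so these quantities are finite a.e. The integrability of the left-hand side then follows a posteriori from the identity, since each term on the right lies in $L^1(\Omega)$.
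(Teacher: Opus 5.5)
Your proposal is correct and follows the paper's proof: you expand the square to get the pointwise identity $m_\varepsilon(x)u(x)^2-2b_\varepsilon(x)u(x)+c_\varepsilon(x)$. You then handle the three terms by the $L^\infty$ convergence of $m_\varepsilon$, by Cauchy--Schwarz, and by the $L^1$ hypothesis on $c_\varepsilon$. Your remarks on the a.e. finiteness of the moments and on the integrability of $u$ against the finite measure $K_\varepsilon(x,\cdot)$ supply details that the paper leaves implicit.
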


\begin{proof}
For almost every $x\in X$ an explicit computation gives
\[
 \int_X|u(x)-u(y)|^2K_\varepsilon(x,\dd y)
 =m_\varepsilon(x)u(x)^2
  -2b_\varepsilon(x)u(x)+c_\varepsilon(x).
\]
The first term converges in $L^1$ by the $L^\infty$ convergence of
$m_\varepsilon$ and $u^2\in L^1(\Omega)$.  The mixed term converges in
$L^1$ by Cauchy--Schwarz, and the last term converges by assumption.
\end{proof}

\begin{remark}
\label{prop:DFP-interpretation}
Let
$\Omega\subset\mathbb R^d$ be bounded, $s>0$ and let
\[
 K_s(x,\dd y)
 :=s\,\one_{\Omega^c}(y)|x-y|^{-d-2s}\,\dd y.
\]
Following \cite{DavoliFanizzaPicerni2026}, for a measurable function $u:\mathbb{R}^d\to\mathbb{R}$ we denote by
\[
\alpha(u):=\lim_{s\to0^+}s\int_{B_1^c(0)}
\frac{u(y)}{|y|^{d+2s}}\,\dd y,
\]
whenever the limit exists and is finite. This is the mass at infinity introduced in \cite{DavoliFanizzaPicerni2026}. With this notation, we call
\[
M_0=\alpha(1)=\frac{\mathcal{H}^{d-1}(\mathbb{S}^{d-1})}{2},\qquad M_1=\alpha(u),\qquad M_2=\alpha(u^2).
\]
Their scalar hypotheses are the existence and finiteness of the relevant
masses at infinity.  In the notation of
\cite[Theorem~1.3]{DavoliFanizzaPicerni2026}, the assumptions on
$\alpha(u^+)$, $\alpha(u^-)$ and $\alpha(u^2)$ determine the three
coefficients in Theorem~\ref{thm:moment-only}.
The resulting limiting functional has the form
\begin{equation*}
 \alpha(1)\int_\Omega u^2
 -2\alpha(u)\int_\Omega u
 +\alpha(u^2)|\Omega|,
\end{equation*}
which is exactly the constant-coefficient functional appearing in
Theorem~\ref{thm:moment-only}.  When $u(r,\theta)\to u_\infty(\theta)$
at infinity, \cite[Theorem~2.11]{DavoliFanizzaPicerni2026} rewrites their
limit as
\[
 \frac12\int_\Omega\int_{\mathbb{S}^{d-1}}
 |u(x)-u_\infty(\theta)|^2
 \,\dd\mathcal H^{d-1}(\theta)\,\dd x.
\]
Writing $\sigma_{d-1}:=\mathcal H^{d-1}(\mathbb{S}^{d-1})$, this has exactly the
tail-law form with
\[
 \lambda_\infty
 =(u_\infty)_\#\left(
 \frac{\mathcal H^{d-1}\!\lfloor_{\mathbb{S}^{d-1}}}
 {\sigma_{d-1}}\right),
 \qquad
 m_\infty=\frac{\sigma_{d-1}}2.
\]
The corresponding tail law is the push-forward of normalized spherical
measure under $u_\infty$, together with total mass
$\sigma_{d-1}/2$. Its quadratic energy profile depends only on the first two
moments, while convergence in $W_2$ retains the entire probability law.

Theorem~\ref{thm:metric-tail-law} does not apply directly to the full kernel
$s\one_{\Omega^c}(y)|x-y|^{-d-2s}\dd y$, whose mass can blow
up as $x$ approaches $\partial\Omega$.  The next subsection introduces a
fixed cutoff, for which the mass is uniformly controlled, and treats the
remaining bounded-range interaction separately.
\end{remark}

\subsection{Angular tail laws and radial limits}

We now apply the energy-profile theorem to the Euclidean fractional kernel.
This yields a metric-valued directional Maz'ya--Shaposhnikova formula
\cite{MazyaShaposhnikova2002} for arbitrary $1\le p<\infty$ without
requiring pointwise or uniform convergence along rays.

Throughout this subsection $d\ge1$,
$\sigma_{d-1}:=\mathcal H^{d-1}(\mathbb{S}^{d-1})$, and
$\Omega\subset\mathbb R^d$ is bounded.  Let $(Y,d_Y,y_0)$ be Polish.  We write
$u\in L^p_{\mathrm{loc}}(\mathbb R^d;Y)$ if
$d_Y(u(\cdot),y_0)\in L^p_{\mathrm{loc}}(\mathbb R^d)$.  Let
$u_\infty:\mathbb{S}^{d-1}\to Y$ be Borel with
$d_Y(u_\infty(\cdot),y_0)\in L^p(\mathbb{S}^{d-1})$.  For $a\in Y$ define
the Abelian $p$-energy profile
\begin{equation}\label{eq:Abelian-p-profile}
 \mathcal A_{s,p}^u(a)
 :=s\int_{B_1^c}
 \frac{d_Y(u(y),a)^p}{|y|^{d+sp}}\,\dd y.
\end{equation}
Our tail assumption, denoted by $\mathrm{EP}_p$, is
\begin{equation}\label{eq:uniform-directional-tail}
 \mathcal A_{s,p}^u(a)
 \longrightarrow
 \frac1p\int_{\mathbb S^{d-1}}
 d_Y(a,u_\infty(\theta))^p
 \,\dd\mathcal H^{d-1}(\theta)
 \qquad\text{for every }a\in Y.
\end{equation}
Define
\begin{equation}\label{eq:directional-tail-law}
 \lambda_\infty
 :=(u_\infty)_\#\left(
 \frac{\mathcal H^{d-1}\!\lfloor_{\mathbb{S}^{d-1}}}
 {\sigma_{d-1}}\right).
\end{equation}
Then $\lambda_\infty\in \Pp(Y)$ because
\[
 \int_Y d_Y(z,y_0)^p\,\dd\lambda_\infty(z)
 =\frac1{\sigma_{d-1}}
 \int_{\mathbb S^{d-1}}d_Y(u_\infty(\theta),y_0)^p
 \,\dd\mathcal H^{d-1}(\theta)<\infty.
\]
Condition $\mathrm{EP}_p$ depends on $u_\infty$ only through the
$p$-distance profile of the push-forward law \eqref{eq:directional-tail-law}.
In particular, it does not by itself assert convergence along individual
rays, and different spherical representatives with the same profile give
the same limiting functional. Under the radial criterion below,
$u_\infty$ is also the $L^p$-limit of the radial slices.

\begin{proposition}
Let $u\in L^p_{\mathrm{loc}}(\mathbb R^d;Y)$ and suppose that
\begin{equation}\label{eq:radial-Lp-convergence}
 \int_{\mathbb S^{d-1}}
 d_Y(u(r\theta),u_\infty(\theta))^p
 \,\dd\mathcal H^{d-1}(\theta)
 \longrightarrow0
 \qquad(r\to\infty),
\end{equation}
where the convergence may be understood after changing the radial
representative on a null set of radii.  Then \eqref{eq:uniform-directional-tail}
holds.
\end{proposition}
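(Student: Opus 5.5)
Fix $a\in Y$ and work in polar coordinates $y=r\theta$, $r>1$, $\theta\in\mathbb S^{d-1}$:
\[
 \mathcal A_{s,p}^u(a)=s\int_1^\infty r^{-1-sp}\,F_a(r)\,\dd r,
 \qquad
 F_a(r):=\int_{\mathbb S^{d-1}}d_Y(u(r\theta),a)^p\,\dd\mathcal H^{d-1}(\theta).
\]
Since $s\int_1^\infty r^{-1-sp}\,\dd r=1/p$, the right-hand side of \eqref{eq:uniform-directional-tail} equals
\[
 s\int_1^\infty r^{-1-sp}F_a^\infty\,\dd r,
 \qquad
 F_a^\infty:=\int_{\mathbb S^{d-1}}d_Y(a,u_\infty(\theta))^p\,\dd\mathcal H^{d-1}.
\]
The plan is therefore an Abelian argument. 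If $F_a(r)\to F_a^\infty$ as $r\to\infty$ for a.e.\ $r$, and $F_a$ is locally integrable on $[1,\infty)$ and bounded for large $r$, then the weights $s\,r^{-1-sp}\dd r$ escape to infinity as $s\downarrow0$ with total mass $1/p$. This yields the claim.

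\textbf{Step 1: convergence of $F_a$.} Apply Minkowski's inequality in $L^p(\mathbb S^{d-1})$ to the functions $\theta\mapsto d_Y(u(r\theta),a)$ and $\theta\mapsto d_Y(u_\infty(\theta),a)$. Their pointwise difference is bounded by $d_Y(u(r\theta),u_\infty(\theta))$, so
\[
 \bigl|F_a(r)^{1/p}-(F_a^\infty)^{1/p}\bigr|
 \le\Bigl(\int_{\mathbb S^{d-1}}d_Y(u(r\theta),u_\infty(\theta))^p\,\dd\mathcal H^{d-1}\Bigr)^{1/p}.
\]
By \eqref{eq:radial-Lp-convergence} this tends to $0$ as $r\to\infty$ outside a null set of radii. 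Here $F_a^\infty<\infty$, because $d_Y(u_\infty,y_0)\in L^p$ and $d_Y(a,u_\infty)\le d_Y(a,y_0)+d_Y(y_0,u_\infty)$.

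\textbf{Step 2: splitting.} Given $\eta>0$, choose $R>1$ such that $|F_a(r)-F_a^\infty|<\eta$ for a.e.\ $r>R$. Split the integral at $R$. On $(1,R)$, bound
\[
 s\int_1^R r^{-1-sp}F_a(r)\,\dd r\le s\int_{B_R\setminus B_1}d_Y(u(y),a)^p\,\dd y\,,
\]
up to $r^{d-1}$ factors. The right-hand side is finite because $u\in L^p_{\mathrm{loc}}$, so this piece is $O(s)\to0$. The same argument applied to $F_a^\infty$ shows that $s\int_1^R r^{-1-sp}F_a^\infty\,\dd r=F_a^\infty(1-R^{-sp})/p\to0$. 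On $(R,\infty)$, the difference between the two integrals is at most $\eta\,s\int_R^\infty r^{-1-sp}\dd r\le\eta/p$. Letting $s\downarrow0$ and then $\eta\to0$ gives \eqref{eq:uniform-directional-tail}.

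\textbf{Main obstacle.} The delicate point is the measure-theoretic bookkeeping. First, one must check that the polar disintegration is compatible with the phrase ``after changing the radial representative on a null set of radii.'' Since modifying $u$ on a Lebesgue-null set of $\mathbb R^d$ does not change $\mathcal A_{s,p}^u(a)$, and $F_a$ is defined for a.e.\ $r$ by Fubini, this should be harmless. Second, local integrability of $F_a$ on $(1,R)$ follows from $u\in L^p_{\mathrm{loc}}$ together with the triangle inequality $d_Y(u,a)^p\le 2^{p-1}\bigl(d_Y(u,y_0)^p+d_Y(y_0,a)^p\bigr)$. No uniformity in $a$ is needed, since \eqref{eq:uniform-directional-tail} is required pointwise in $a$.
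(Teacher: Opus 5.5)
Your proposal is correct and follows essentially the paper's argument. Both use polar coordinates to write $\mathcal A_{s,p}^u(a)=s\int_1^\infty F_a(r)\,r^{-1-sp}\,\dd r$, apply the reverse triangle inequality in $L^p(\mathbb S^{d-1})$ to get $F_a(r)\to F_a^\infty$, and then pass to the Abelian limit; the only difference is that you prove this limit by splitting at $R$, whereas the paper quotes it as an elementary fact.
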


\begin{proof}
For $a\in Y$ set
\[
 H_a(r):=\left(\int_{\mathbb S^{d-1}}
 d_Y(a,u(r\theta))^p\,\dd\mathcal H^{d-1}(\theta)\right)^{1/p}
\]
and
\[
 H_{\infty,a}:=\left(\int_{\mathbb S^{d-1}}
 d_Y(a,u_\infty(\theta))^p
 \,\dd\mathcal H^{d-1}(\theta)\right)^{1/p}.
\]
The reverse triangle inequality in $L^p(\mathbb S^{d-1})$ gives
\[
 |H_a(r)-H_{\infty,a}|
 \le
 \left(\int_{\mathbb S^{d-1}}
 d_Y(u(r\theta),u_\infty(\theta))^p
 \,\dd\mathcal H^{d-1}(\theta)\right)^{1/p},
\]
and hence $H_a(r)^p\to H_{\infty,a}^p$.  Moreover,
$H_a(\cdot)^p\in L^1_{\mathrm{loc}}([1,\infty))$ by the local
$L^p$ assumption.  The elementary fact
\[
 h(r)\longrightarrow L
 \quad\Longrightarrow\quad
 s\int_1^\infty h(r)r^{-1-sp}\,\dd r\longrightarrow\frac Lp
\]
for locally integrable, eventually bounded $h$ now gives
\[
 \mathcal A_{s,p}^u(a)
 =s\int_1^\infty H_a(r)^p r^{-1-sp}\,\dd r
 \longrightarrow\frac1pH_{\infty,a}^p,
\]
which is \eqref{eq:uniform-directional-tail}.  
\end{proof}


\begin{remark}
If $d_Y(u(\cdot),y_0)\in L^p(\mathbb R^d)$, then
$\mathrm{EP}_p$ holds with $u_\infty\equiv y_0$, or equivalently with the
tail law $\delta_{y_0}$.  Indeed, the reverse triangle inequality in the
weighted $L^p$ space gives
\begin{align*}
 &\left|
 \mathcal A_{s,p}^u(a)^{1/p}
 -\left(\frac{\sigma_{d-1}}p\right)^{1/p}d_Y(a,y_0)
 \right|\\
 &\qquad\le
 \left(s\int_{B_1^c}
 \frac{d_Y(u(y),y_0)^p}{|y|^{d+sp}}\,\dd y\right)^{1/p}
 \le s^{1/p}\|d_Y(u(\cdot),y_0)\|_{L^p(\mathbb R^d)},
\end{align*}
which tends to zero.  In particular, the condition is automatic for
globally defined fractional Sobolev maps
$u\in W^{s_0,p}(\mathbb R^d;Y)$, understood through the metric Gagliardo
energy.
\end{remark}

Put $M_\Omega:=\sup_{x\in\Omega}|x|$ and fix
$R>\max\{1,2M_\Omega\}$.  For $0<s<1$ and $x\in\Omega$ define
\begin{equation}\label{eq:far-cutoff-kernel}
 K_{s,R}(x,\dd y)
 :=s\,\one_{\{|y|>R\}}|x-y|^{-d-sp}\,\dd y,
\end{equation}
and
\[
 m_{s,R}(x):=K_{s,R}(x,\mathbb R^d)
 =s\int_{|y|>R}\frac{\dd y}{|x-y|^{d+sp}}.
\]

The following estimate is implicit in
\cite[Lemma~2.1]{DavoliFanizzaPicerni2026} and
\cite{DavoliDiFrattaGiorgioPinamonti2025}.

\begin{lemma}
\label{lem:far-angular-equidistribution}
For $\theta\in \mathbb{S}^{d-1}$ set
\[
 w_{s,R,x}(\theta)
 :=s\int_R^\infty
 \frac{r^{d-1}}{|r\theta-x|^{d+sp}}\,\dd r.
\]
Then, uniformly for $x\in\Omega$ and $\theta\in \mathbb{S}^{d-1}$,
\begin{equation}\label{eq:w-uniform}
 w_{s,R,x}(\theta)=\frac{R^{-sp}}p+O(s)
 \qquad(s\downarrow0).
\end{equation}
Consequently
\begin{equation}\label{eq:m-far-uniform}
 m_{s,R}\longrightarrow\frac{\sigma_{d-1}}p
 \quad\text{in }L^\infty(\Omega),
\end{equation}
and the normalized angular densities
\[
 g_{s,R,x}(\theta):=\frac{w_{s,R,x}(\theta)}{m_{s,R}(x)}
\]
satisfy
\begin{equation}\label{eq:g-angular-uniform}
 \sup_{x\in\Omega,\,\theta\in \mathbb{S}^{d-1}}
 \left|g_{s,R,x}(\theta)-\frac1{\sigma_{d-1}}\right|
 \longrightarrow0.
\end{equation}
\end{lemma}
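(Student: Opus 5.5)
The plan is to compare $|r\theta-x|$ with $r$ uniformly, and then reduce everything to the explicit integral $s\int_R^\infty r^{-1-sp}\,\dd r=R^{-sp}/p$. Since $|x|\le M_\Omega<R/2\le r/2$, the triangle inequality gives
\[
 \Bigl(1-\tfrac{M_\Omega}{r}\Bigr)r\le|r\theta-x|\le\Bigl(1+\tfrac{M_\Omega}{r}\Bigr)r .
\]
Hence
\[
 \frac{r^{d-1}}{|r\theta-x|^{d+sp}}=r^{-1-sp}\,\rho(r,\theta,x),
\]
where $\rho$ lies between $(1+M_\Omega/r)^{-(d+sp)}$ and $(1-M_\Omega/r)^{-(d+sp)}$. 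Because $M_\Omega/r\le 1/2$ and $d+sp\le d+p$, we have $|\rho-1|\le C M_\Omega/r$, with $C$ depending only on $d$ and $p$, uniformly in $s\in(0,1)$, $x\in\Omega$ and $\theta\in\mathbb S^{d-1}$.

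This gives the estimate
\[
 \Bigl|w_{s,R,x}(\theta)-\frac{R^{-sp}}p\Bigr|\le s\,CM_\Omega\int_R^\infty r^{-2-sp}\,\dd r\le \frac{CM_\Omega}{R}\,s=O(s),
\]
which is \eqref{eq:w-uniform}. The key point is that the error integral $\int_R^\infty r^{-2}\,\dd r$ converges without the help of $s$, so the prefactor $s$ survives.

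Next, integrating in polar coordinates gives $m_{s,R}(x)=\int_{\mathbb S^{d-1}}w_{s,R,x}(\theta)\,\dd\mathcal H^{d-1}(\theta)$. Therefore
\[
 m_{s,R}(x)=\sigma_{d-1}R^{-sp}/p+O(s)
\]
uniformly in $x$. Since $R^{-sp}\to1$ as $s\downarrow0$ with $R$ fixed, this yields \eqref{eq:m-far-uniform}.

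Finally, for $s$ small $m_{s,R}\ge\sigma_{d-1}/(2p)>0$ uniformly, so the quotient $g_{s,R,x}(\theta)=w_{s,R,x}(\theta)/m_{s,R}(x)$ is well defined. Its numerator tends uniformly to $1/p$ and its denominator uniformly to $\sigma_{d-1}/p$. Uniform convergence of a quotient whose denominator is bounded away from zero then gives \eqref{eq:g-angular-uniform}.

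The only delicate point is keeping every constant independent of $s$, $x$ and $\theta$. The choice $R>2M_\Omega$ handles this: the ratio $M_\Omega/r$ stays at most $1/2$ throughout, so the first-order bound $|(1\pm t)^{-q}-1|\le C_q t$ for $t\le1/2$ applies with a single constant $C_q$ for all $q\in[d,d+p]$.
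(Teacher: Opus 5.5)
Your proposal is correct and follows the paper's proof essentially step by step. The one minor difference is how you get the first-order bound $\bigl||\theta-x/r|^{-d-sp}-1\bigr|\le C|x|/r$: the paper applies the mean value theorem to $a\mapsto|\theta-a|^{-d-sp}$ on $|a|\le 1/2$, while you use the triangle-inequality sandwich and the scalar estimate $|(1\pm t)^{-q}-1|\le C_q t$; the rest (integrating $r^{-2-sp}$, integrating in $\theta$ by polar coordinates, and dividing by a mass bounded away from zero) is exactly as in the paper.
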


\begin{proof}
Since $R>2M_\Omega$, for $r\ge R$ we have $|x|/r\le1/2$.  For
$0<s\le1$ the functions
$a\mapsto|\theta-a|^{-d-sp}$ have gradients uniformly bounded on
$|a|\le1/2$, uniformly also in $\theta\in \mathbb{S}^{d-1}$.  Hence, by the mean value theorem applied to the function $a\to |a-\theta|^{-d-sp}$ along the segment joining $0$ to $x/r$, we get
\[
 \left||\theta-x/r|^{-d-sp}-1\right|
 \le C\frac{|x|}{r}.
\]
Therefore,
\begin{align*}
 \left|w_{s,R,x}(\theta)-s\int_R^\infty r^{-1-sp}\,\dd r\right|
 &\le C M_\Omega s\int_R^\infty r^{-2-sp}\,\dd r\\
 &\le C_R s.
\end{align*}
and \eqref{eq:w-uniform} follows since
\[
 s\int_R^\infty r^{-1-sp}\,\dd r=\frac{R^{-sp}}p.
\]
Integrating in $\theta$ and using the coarea formula proves
\eqref{eq:m-far-uniform}.  Dividing \eqref{eq:w-uniform} by the mass, which
is bounded away from zero for small $s$, gives
\eqref{eq:g-angular-uniform}.
\end{proof}

\begin{lemma}
\label{lem:directional-Wp-tail}
Let $u\in L^p_{\mathrm{loc}}(\mathbb R^d;Y)$ satisfy
\eqref{eq:uniform-directional-tail}.  Let $\lambda_{s,R,x}$ be the
normalized target law associated with \eqref{eq:far-cutoff-kernel}.  Then
$\lambda_{s,R,x}\in\Pp(Y)$ and, for every $a\in Y$,
\begin{equation}\label{eq:directional-Wp-uniform}
 \sup_{x\in\Omega}
 \left|
 \left(\int_Yd_Y(a,z)^p\,\dd\lambda_{s,R,x}(z)\right)^{1/p}
 -\left(\int_Yd_Y(a,z)^p\,\dd\lambda_\infty(z)\right)^{1/p}
 \right|
 \longrightarrow0.
\end{equation}
\end{lemma}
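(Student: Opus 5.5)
In polar coordinates $y=r\theta$, for $x\in\Omega$ and $m_{s,R}(x)>0$ (true for small $s$ by Lemma~\ref{lem:far-angular-equidistribution}), the profile of $\lambda_{s,R,x}$ is
\[
 P_{s,x}(a)^p=\frac{1}{m_{s,R}(x)}\, s\int_{\mathbb S^{d-1}}\int_R^\infty
 \frac{d_Y(u(r\theta),a)^p\,r^{d-1}}{|r\theta-x|^{d+sp}}\,\dd r\,\dd\mathcal H^{d-1}(\theta).
\]
We compare this with $m_{s,R}(x)^{-1}\cdot s\int_{|y|>R}d_Y(u(y),a)^p|y|^{-d-sp}\,\dd y$. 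The first step is to record the pointwise kernel comparison from the proof of Lemma~\ref{lem:far-angular-equidistribution}. For $|y|=r>R$ and $x\in\Omega$,
\[
 \bigl||y-x|^{-d-sp}-|y|^{-d-sp}\bigr|\le C\frac{M_\Omega}{r}|y|^{-d-sp}\le \frac{CM_\Omega}{R}|y|^{-d-sp}.
\]
This bound is too weak: it gives a relative error of size $CM_\Omega/R$, which does not vanish as $s\to0$. I would therefore split the radial range at a large $\rho>R$. On $\{|y|>\rho\}$ the relative error is at most $CM_\Omega/\rho$, uniformly in $x$ and $s$. On $\{R<|y|\le\rho\}$ both kernels are bounded by $C_R$, so the contribution is at most $sC_R\int_{R<|y|\le\rho}d_Y(u,a)^p\,\dd y$. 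This is finite because $u\in L^p_{\mathrm{loc}}$, and it tends to $0$ as $s\to0$.

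Next, denote $T_s(a):=s\int_{|y|>R}d_Y(u(y),a)^p|y|^{-d-sp}\,\dd y$. This differs from $\mathcal A_{s,p}^u(a)$ by $s\int_{1<|y|\le R}\cdots$, which is $O(s)$ by local integrability. So $\mathrm{EP}_p$ gives
\[
 T_s(a)\to\frac1p\int_{\mathbb S^{d-1}}d_Y(a,u_\infty)^p\,\dd\mathcal H^{d-1}.
\]
Combining this with the splitting yields
\[
 \sup_{x\in\Omega}\Bigl|s\int_{|y|>R}\frac{d_Y(u(y),a)^p}{|x-y|^{d+sp}}\dd y-T_s(a)\Bigr|\le \frac{CM_\Omega}{\rho}\,T_s(a)+o_s(1).
\]
Here $\limsup_s T_s(a)<\infty$, so letting $s\to0$ and then $\rho\to\infty$ shows the left side tends to $0$. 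Dividing by $m_{s,R}(x)$, which converges uniformly to $\sigma_{d-1}/p$ by \eqref{eq:m-far-uniform}, gives
\[
 \sup_x\Bigl|P_{s,x}(a)^p-\sigma_{d-1}^{-1}\int d_Y(a,u_\infty)^p\Bigr|\to0,
\]
and the right-hand limit is exactly $P_\infty(a)^p$ for the law \eqref{eq:directional-tail-law}. Taking $p$-th roots, which is uniformly continuous on bounded sets of $[0,\infty)$, yields \eqref{eq:directional-Wp-uniform}.

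Membership $\lambda_{s,R,x}\in\Pp(Y)$ is the case $a=y_0$ of the same bound: $P_{s,x}(y_0)^p$ is finite for small $s$, since $T_s(y_0)<\infty$ for small $s$ by $\mathrm{EP}_p$, and finiteness of $T_s(y_0)$ at a given $s$ gives finiteness of $T_{s'}(y_0)$ for all $s'\ge s$. If needed, I would restrict to small $s$.

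The main obstacle is that the uniform kernel comparison is only relative with error $O(1/R)$, not $o(1)$. This is what forces the $\rho$-splitting, and it needs uniform control of the near-range part via local $L^p$ together with the factor $s$.
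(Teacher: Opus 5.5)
Your proposal is correct and follows essentially the same argument as the paper. You compare the kernel centred at $x$ with the one centred at the origin (your $T_s(a)$ is the paper's $N_{s,R,0}(a)$), split at an auxiliary radius ($\rho$ for you, $A$ in the paper), control the annulus $R<|y|\le\rho$ by bounded kernels and local $L^p$ integrability and the outer region by the mean-value relative error $CM_\Omega/\rho$, let $s\downarrow0$ and then $\rho\to\infty$, and finally divide by the uniformly convergent mass $m_{s,R}$; your explicit treatment of the $p$-th root and of the case $a=y_0$ for membership in $\Pp(Y)$ simply spells out steps the paper leaves implicit.
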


\begin{proof}
For $a\in Y$ set
\[
 N_{s,R,x}(a)
 :=s\int_{|y|>R}
 \frac{d_Y(a,u(y))^p}{|x-y|^{d+sp}}\,\dd y.
\]
Since $R>1$, local $L^p$ integrability gives
\[
 s\int_{1<|y|\le R}
 \frac{d_Y(a,u(y))^p}{|y|^{d+sp}}\,\dd y\longrightarrow0.
\]
Consequently, \eqref{eq:uniform-directional-tail} implies
\begin{equation}\label{eq:origin-profile-cutoff}
 N_{s,R,0}(a)
 \longrightarrow
 L(a):=\frac1p\int_{\mathbb S^{d-1}}
 d_Y(a,u_\infty(\theta))^p
 \,\dd\mathcal H^{d-1}(\theta).
\end{equation}

We next show that translating the centre by a bounded vector does not alter
this limit.  Fix $A>R$.  On $R<|y|\le A$, both $|y|$ and $|x-y|$ are
bounded away from zero, uniformly for $x\in\Omega$.  Hence local
$L^p$ integrability gives
\begin{equation}\label{eq:A-normalized-vanish}
 \sup_{x\in\Omega}s\int_{R<|y|\le A}d_Y(a,u(y))^p
 \left||x-y|^{-d-sp}-|y|^{-d-sp}\right|\,\dd y
 \longrightarrow0.
\end{equation}
For $|y|>A>2M_\Omega$, the mean value theorem yields, uniformly for
$0<s\le1$,
\[
 \left||x-y|^{-d-sp}-|y|^{-d-sp}\right|
 \le C\frac{M_\Omega}{|y|}|y|^{-d-sp}.
\]
By \eqref{eq:origin-profile-cutoff}, there exists $s_a\in(0,1)$ such that
$\sup_{0<s<s_a}N_{s,R,0}(a)<\infty$.  It follows that
\begin{equation}\label{eq:B-normalized-estimate}
 \limsup_{s\downarrow0}\sup_{x\in\Omega}
 |N_{s,R,x}(a)-N_{s,R,0}(a)|
 \le \frac{C M_\Omega}{A}
 \sup_{0<s<s_a}N_{s,R,0}(a).
\end{equation}
Letting $A\to\infty$ and using \eqref{eq:origin-profile-cutoff}, we obtain
\begin{equation}\label{eq:shifted-profile-uniform}
 \sup_{x\in\Omega}|N_{s,R,x}(a)-L(a)|\longrightarrow0.
\end{equation}
In particular, the case $a=y_0$ shows that
$\lambda_{s,R,x}\in\Pp(Y)$ for small $s$.  Finally,
\[
 \int_Yd_Y(a,z)^p\,\dd\lambda_{s,R,x}(z)
 =\frac{N_{s,R,x}(a)}{m_{s,R}(x)}.
\]
Combining \eqref{eq:shifted-profile-uniform} with
\eqref{eq:m-far-uniform}, and observing that
\[
 \int_Yd_Y(a,z)^p\,\dd\lambda_\infty(z)
 =\frac1{\sigma_{d-1}}
 \int_{\mathbb S^{d-1}}d_Y(a,u_\infty(\theta))^p
 \,\dd\mathcal H^{d-1}(\theta),
\]
proves \eqref{eq:directional-Wp-uniform}.
\end{proof}

For $0<s<1$ define the \emph{localized fractional interaction}
\begin{equation*}
 \mathcal E_{s,p}^\Omega(u)
 :=\iint_{Q_\Omega}
 \frac{d_Y(u(x),u(y))^p}{|x-y|^{d+sp}}\,\dd y\,\dd x,
 \qquad
 Q_\Omega=(\mathbb R^d\times\mathbb R^d)
 \setminus(\Omega^c\times\Omega^c).
\end{equation*}

\begin{theorem}
\label{thm:directional-metric-MS}
Let $1\le p<\infty$, let $u\in L^p_{\mathrm{loc}}(\mathbb R^d;Y)$, and let
$u_\infty:\mathbb S^{d-1}\to Y$ be Borel with
$d_Y(u_\infty(\cdot),y_0)\in L^p(\mathbb S^{d-1})$.  Assume
$\mathrm{EP}_p$, namely \eqref{eq:uniform-directional-tail}.  Suppose that
for some $s_0\in(0,1)$,
\begin{equation*}
 \mathcal E_{s_0,p}^\Omega(u)<\infty.
\end{equation*}
Then
\begin{equation}\label{eq:directional-metric-MS}
 \lim_{s\downarrow0}\frac{s}{2}\mathcal E_{s,p}^\Omega(u)
 =\frac1p\int_\Omega\int_{\mathbb{S}^{d-1}}
 d_Y(u(x),u_\infty(\theta))^p
 \,\dd\mathcal H^{d-1}(\theta)\,\dd x.
\end{equation}
\end{theorem}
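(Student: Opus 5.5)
By symmetry of the integrand, the plan is to split $Q_\Omega$ into $\Omega\times\Omega$ plus twice $\Omega\times\Omega^c$:
\[
\tfrac{s}{2}\mathcal E_{s,p}^\Omega(u)=\tfrac s2\iint_{\Omega\times\Omega}\frac{d_Y(u(x),u(y))^p}{|x-y|^{d+sp}}\,\dd y\,\dd x+s\int_\Omega\int_{\Omega^c}\frac{d_Y(u(x),u(y))^p}{|x-y|^{d+sp}}\,\dd y\,\dd x .
\]
With $R>\max\{1,2M_\Omega\}$ fixed as in \eqref{eq:far-cutoff-kernel}, we further split the second term according to $|y|>R$ or $y\in\Omega^c\cap\{|y|\le R\}$. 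The far part is exactly $\int_\Omega\int d_Y(u(x),u(y))^pK_{s,R}(x,\dd y)\,\dd x$. All remaining pieces are short-range and should vanish after multiplication by $s$.

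For the far part we apply Theorem~\ref{thm:metric-tail-law} with $K_\varepsilon=K_{s,R}$ and $\mu$ the Lebesgue measure on $\Omega$. Lemma~\ref{lem:far-angular-equidistribution} gives $m_{s,R}\to\sigma_{d-1}/p$ in $L^\infty(\Omega)$, with uniformly bounded masses. Lemma~\ref{lem:directional-Wp-uniform} gives $\lambda_{s,R,x}\in\Pp(Y)$ for small $s$ and uniform convergence of the $p$-profiles, hence \eqref{eq:integrated-profile-tail} since $\Omega$ is bounded. Also $u|_\Omega\in L^p(\Omega;Y)$ by local integrability. The theorem then yields
\[
\frac{\sigma_{d-1}}{p}\int_\Omega\int_Y d_Y(u(x),z)^p\,\dd\lambda_\infty(z)\,\dd x=\frac1p\int_\Omega\int_{\mathbb S^{d-1}}d_Y(u(x),u_\infty(\theta))^p\,\dd\mathcal H^{d-1}(\theta)\,\dd x ,
\]
using \eqref{eq:directional-tail-law}. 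This is the right-hand side of \eqref{eq:directional-metric-MS}.

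The main obstacle is the short-range remainder, i.e.\ pairs $(x,y)\in\Omega\times(\Omega\cup\{|y|\le R\})$, which lie in the bounded set $B_R\times B_R$. On this set we use the finiteness of $\mathcal E_{s_0,p}^\Omega(u)$, since every such pair with $x\in\Omega$ belongs to $Q_\Omega$. Splitting according to $|x-y|\le1$ or $|x-y|>1$:
\begin{itemize}
\item For $|x-y|\le1$ and $s<s_0$, we have $|x-y|^{-d-sp}\le|x-y|^{-d-s_0p}$. The contribution is therefore bounded by $s\,\mathcal E_{s_0,p}^\Omega(u)\to0$.
\item For $|x-y|>1$ with $x,y\in B_R$, the kernel is at most $1$. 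The contribution is bounded by $s\,2^{p-1}\int_\Omega\int_{B_R}\bigl(d_Y(u(x),y_0)^p+d_Y(u(y),y_0)^p\bigr)\,\dd y\,\dd x$, which is finite by $u\in L^p_{\mathrm{loc}}$, so it is $O(s)$.
\end{itemize}

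Thus the remainder is $o(1)$. The only care needed is that the near-diagonal part uses monotonicity in $s$ of $|x-y|^{-sp}$ for $|x-y|\le1$; this is exactly where the hypothesis at order $s_0$ enters, and it also covers the boundary layer where the full kernel mass blows up. Summing the three contributions proves \eqref{eq:directional-metric-MS}.
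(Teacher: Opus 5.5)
Your proposal is correct and follows the paper's proof essentially step by step. You use the same symmetric split into the $\Omega\times\Omega$ term, the bounded exterior term over $\Omega^c\cap\{|y|\le R\}$, and the far term over $\{|y|>R\}$; the far term is handled via Lemma~\ref{lem:far-angular-equidistribution}, Lemma~\ref{lem:directional-Wp-tail} and Theorem~\ref{thm:metric-tail-law}, and the short-range pieces vanish after splitting at $|x-y|=1$ and using $\mathcal E_{s_0,p}^\Omega(u)<\infty$ together with local $L^p$ bounds (the only slip is a label: the lemma you cite is Lemma~\ref{lem:directional-Wp-tail}, whose conclusion is \eqref{eq:directional-Wp-uniform}).
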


\begin{proof}
Fix $R>\max\{1,2M_\Omega\}$.  By symmetry,
\begin{align*}
 \frac{s}{2}\mathcal E_{s,p}^\Omega(u)
 &=\frac{s}{2}\iint_{\Omega\times\Omega}
 \frac{d_Y(u(x),u(y))^p}{|x-y|^{d+sp}}\,\dd y\,\dd x\quad (=:\mathcal{A}_s)\\
 &\quad+s\int_\Omega\int_{\Omega^c\cap B_R}
 \frac{d_Y(u(x),u(y))^p}{|x-y|^{d+sp}}\,\dd y\,\dd x\quad (=:\mathcal{B}_s)\\
 &\quad+s\int_\Omega\int_{\Omega^c\cap \{|y|>R\}}
 \frac{d_Y(u(x),u(y))^p}{|x-y|^{d+sp}}\,\dd y\,\dd x.\quad (=:\mathcal{C}_s)
\end{align*}
Now, splitting at $|x-y|=1$ and using $s<s_0$ on the short-range part,
\begin{align*}
\mathcal A_s
&\le \frac{s}{2}\mathcal E_{s_0,p}^\Omega(u)
 +2^{p-1}s\,|\Omega|
 \int_\Omega d_Y(u(x),y_0)^p\,\dd x
 \longrightarrow0.
\end{align*}
Similarly, with $D_R:=\Omega^c\cap B_R$,
\begin{align*}
\mathcal B_s
&\le s\mathcal E_{s_0,p}^\Omega(u)
 +2^{p-1}s\,|\Omega|
 \int_{D_R}d_Y(u(y),y_0)^p\,\dd y\\
&\quad
 +2^{p-1}s\,|D_R|
 \int_\Omega d_Y(u(x),y_0)^p\,\dd x
 \longrightarrow0.
\end{align*}
For the last term use the kernel \eqref{eq:far-cutoff-kernel}.  By
Lemma~\ref{lem:far-angular-equidistribution},
$m_{s,R}\to\sigma_{d-1}/p$ in $L^\infty(\Omega)$, and, in particular,
\[
\sup_{0<s<s_*}\|m_{s,R}\|_{L^\infty(\Omega)}<\infty
\]
for some $s_*>0$.  Lemma~\ref{lem:directional-Wp-tail} gives the
$p$-energy-profile convergence \eqref{eq:integrated-profile-tail}, in fact
uniformly in $x$.  Therefore Theorem~\ref{thm:metric-tail-law}
gives\footnote{Since $R>2M_\Omega$, we have
$\{|y|>R\}\subset\Omega^c$ and, up to the null sphere $\partial B_R$,
$\Omega^c=(\Omega^c\cap B_R)\cup\{|y|>R\}$.}
\begin{align*}
 \lim_{s\downarrow 0} \mathcal C_s&=\lim_{s\downarrow0}s\int_\Omega\int_{\{|y|>R\}}
 \frac{d_Y(u(x),u(y))^p}{|x-y|^{d+sp}}\,\dd y\,\dd x\\
 &=\frac{\sigma_{d-1}}p
 \int_\Omega\int_Yd_Y(u(x),z)^p\,\dd\lambda_\infty(z)\,\dd x\\
 &=\frac1p\int_\Omega\int_{\mathbb{S}^{d-1}}
 d_Y(u(x),u_\infty(\theta))^p
 \,\dd\mathcal H^{d-1}(\theta)\,\dd x.
\end{align*}
Combining the three contributions proves \eqref{eq:directional-metric-MS}.
\end{proof}
\begin{remark}[Comparison with \cite{DavoliFanizzaPicerni2026}]
For $Y=\mathbb R$ and $p=2$, Theorem~\ref{thm:directional-metric-MS} gives
\[
 \lim_{s\downarrow0}\frac{s}{2}\mathcal E_{s,2}^\Omega(u)
 =\frac12\int_\Omega\int_{\mathbb{S}^{d-1}}
 |u(x)-u_\infty(\theta)|^2
 \,\dd\mathcal H^{d-1}(\theta)\,\dd x,
\]
which is the directional formula of
\cite[Theorem~2.11]{DavoliFanizzaPicerni2026}.  More precisely, writing
\[
 \alpha_s(f):=s\int_{B_1^c}\frac{f(y)}{|y|^{d+2s}}\,\dd y,
\]
we have
\[
 \mathcal A_{s,2}^u(a)
 =\frac{\sigma_{d-1}}2a^2-2a\alpha_s(u)+\alpha_s(u^2).
\]
Consequently $\mathrm{EP}_2$ is equivalent to
\[
 \alpha_s(u)\longrightarrow
 \frac12\int_{\mathbb S^{d-1}}u_\infty\,\dd\mathcal H^{d-1},
 \qquad
 \alpha_s(u^2)\longrightarrow
 \frac12\int_{\mathbb S^{d-1}}u_\infty^2\,\dd\mathcal H^{d-1}.
\]
These are precisely the first two moments entering the quadratic
mass-at-infinity formula.  The hypotheses of
\cite[Theorem~2.11]{DavoliFanizzaPicerni2026}, through
\cite[Lemma~2.9]{DavoliFanizzaPicerni2026}, imply these identities, but also
require bounded exterior data and uniform radial convergence. Thus the
present hypotheses give their pointwise directional formula and also cover
tails specified only through energy profiles. For arbitrary $p$ and Polish targets,
the energy-profile condition replaces the quadratic moment expansion by the
family of costs $a\mapsto d_Y(a,\cdot)^p$.
\end{remark}

\subsubsection{Anisotropic consequences}

The preceding directional theorem is stable under replacing the Euclidean norm by a norm induced by a symmetric convex body. This yields the following anisotropic form and, as a scalar specialization, Ludwig's limit \cite[Theorem~2]{Ludwig2014} when the state at infinity vanishes.

\begin{proposition}
\label{prop:anisotropic-directional-MS}
Let $K\subset\mathbb R^n$ be an origin-symmetric convex body with nonempty
interior, and let $\|\cdot\|_K$ be its Minkowski norm. Set
\[
 S_K:=\{\omega\in\mathbb R^n:\|\omega\|_K=1\},
\]
and let $\sigma_K$ be the polar measure on $S_K$, characterized by
\begin{equation}\label{eq:K-polar-formula}
 \int_{\mathbb R^n} f(z)\,\dd z
 =
 \int_0^\infty\int_{S_K}
 f(r\omega)\,\dd\sigma_K(\omega)\,r^{n-1}\,\dd r
\end{equation}
for every nonnegative Borel function $f$. In particular,
\begin{equation*}
 \sigma_K(S_K)=n|K|.
\end{equation*}

Let $(Y,d_Y,y_0)$ be a Polish pointed metric space,
let $\Omega\subset\mathbb R^n$ be bounded and Borel, and let
$u\in L^p_{\rm loc}(\mathbb R^n;Y)$, where $1\le p<\infty$.
Assume that there exists a Borel map
\[
 u_\infty:S_K\to Y,
 \qquad
 \int_{S_K}d_Y(u_\infty(\omega),y_0)^p
 \,\dd\sigma_K(\omega)<\infty,
\]
such that the anisotropic energy-profile condition
\begin{equation}\label{eq:anisotropic-energy-profile}
 s\int_{\{\|y\|_K>1\}}
 \frac{d_Y(u(y),a)^p}{\|y\|_K^{n+sp}}\,\dd y
 \longrightarrow
 \frac1p\int_{S_K}d_Y(a,u_\infty(\omega))^p\,\dd\sigma_K(\omega)
 \qquad\text{for every }a\in Y
\end{equation}
holds.

Define
\begin{equation*}
 \mathcal E_{s,p}^{\Omega,K}(u)
 :=
 \iint_{Q_\Omega}
 \frac{d_Y(u(x),u(y))^p}
      {\|x-y\|_K^{n+sp}}
 \,\dd y\,\dd x,
 \qquad
 Q_\Omega
 :=
 (\mathbb R^n\times\mathbb R^n)
 \setminus(\Omega^c\times\Omega^c).
\end{equation*}
If
\begin{equation}\label{eq:anisotropic-positive-order}
 \mathcal E_{s_0,p}^{\Omega,K}(u)<\infty
\end{equation}
for some $s_0\in(0,1)$, then
\begin{equation}\label{eq:anisotropic-directional-MS}
 \lim_{s\downarrow0}
 \frac{s}{2}\mathcal E_{s,p}^{\Omega,K}(u)
 =
 \frac1p
 \int_\Omega\int_{S_K}
 d_Y\bigl(u(x),u_\infty(\omega)\bigr)^p
 \,\dd\sigma_K(\omega)\,\dd x.
\end{equation}
\end{proposition}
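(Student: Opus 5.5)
The plan is to repeat the proof of Theorem~\ref{thm:directional-metric-MS} verbatim, replacing the Euclidean norm by $\|\cdot\|_K$ and the spherical polar formula by \eqref{eq:K-polar-formula}. Two facts are used throughout. First, by norm equivalence there are constants $0<c_K\le C_K$ with $c_K|z|\le\|z\|_K\le C_K|z|$. Second, $\|\cdot\|_K$ is symmetric, because $K$ is origin-symmetric, so $\|x-y\|_K=\|y-x\|_K$. Put $M_\Omega^K:=\sup_{x\in\Omega}\|x\|_K$ and fix $R>\max\{1,2M_\Omega^K\}$. The symmetry lets us split
\[
\frac s2\mathcal E_{s,p}^{\Omega,K}(u)=\mathcal A_s+\mathcal B_s+\mathcal C_s,
\]
with the pieces defined as in that theorem:
\begin{itemize}
\item $\mathcal A_s$ is the $\Omega\times\Omega$ part;
\item $\mathcal B_s$ is the part with $y\in\Omega^c\cap\{\|y\|_K\le R\}$;
\item $\mathcal C_s$ is the part with $\|y\|_K>R$, which forces $y\in\Omega^c$.
\end{itemize}

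The pieces $\mathcal A_s$ and $\mathcal B_s$ are handled exactly as before. Split each at $\|x-y\|_K=1$. On the short-range part, $\|x-y\|_K^{-n-sp}\le\|x-y\|_K^{-n-s_0p}$, so this part is bounded by $s\,\mathcal E^{\Omega,K}_{s_0,p}(u)$ using \eqref{eq:anisotropic-positive-order}. On the long-range part the kernel is at most $1$. The bound $d_Y(u(x),u(y))^p\le 2^{p-1}\bigl(d_Y(u(x),y_0)^p+d_Y(u(y),y_0)^p\bigr)$ and local $L^p$ integrability on bounded sets then give $O(s)$. Hence $\mathcal A_s,\mathcal B_s\to0$.

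For $\mathcal C_s$ we apply Theorem~\ref{thm:metric-tail-law} with the kernel $K_{s,R}(x,\dd y):=s\one_{\{\|y\|_K>R\}}\|x-y\|_K^{-n-sp}\,\dd y$. Two anisotropic analogues of earlier lemmas are needed. The first is the analogue of Lemma~\ref{lem:far-angular-equidistribution}. Write $y=r\omega$ with $\omega\in S_K$ and $r>R$. Then $\|\omega-x/r\|_K$ lies in $[1-\|x\|_K/r,\,1+\|x\|_K/r]$ by the triangle inequality for $\|\cdot\|_K$. Hence $\bigl|\|\omega-x/r\|_K^{-n-sp}-1\bigr|\le C\,M^K_\Omega/r$ uniformly for $0<s\le1$. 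This one-line bound replaces the smooth mean value theorem, which is unavailable because $\|\cdot\|_K$ need not be differentiable. The formula \eqref{eq:K-polar-formula} then gives $m_{s,R}\to\sigma_K(S_K)/p$ in $L^\infty(\Omega)$.

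The second is the analogue of Lemma~\ref{lem:directional-Wp-tail}. The hypothesis \eqref{eq:anisotropic-energy-profile} controls the profile centred at $0$ on $\{\|y\|_K>1\}$. We discard the shell $1<\|y\|_K\le R$, which contributes $O(s)$ by local integrability. We then compare the kernel centred at $x$ with the kernel centred at $0$ via $\bigl|\|x-y\|_K^{-n-sp}-\|y\|_K^{-n-sp}\bigr|\le C M^K_\Omega\|y\|_K^{-1}\|y\|_K^{-n-sp}$ for $\|y\|_K>A$. This follows from the same triangle-inequality bound. The bounded region $R<\|y\|_K\le A$ vanishes, and letting $A\to\infty$ yields
\[
\sup_{x\in\Omega}\Bigl|N_{s,R,x}(a)-\tfrac1p\textstyle\int_{S_K}d_Y(a,u_\infty)^p\,\dd\sigma_K\Bigr|\to0 .
\]
Dividing by $m_{s,R}$ gives uniform, hence $L^p(\Omega)$, profile convergence to $\lambda_\infty:=(u_\infty)_\#(\sigma_K/\sigma_K(S_K))$, which lies in $\Pp(Y)$ by the integrability of $u_\infty$.

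Theorem~\ref{thm:metric-tail-law} then gives
\[
\mathcal C_s\to\frac{\sigma_K(S_K)}p\int_\Omega\int_Y d_Y(u(x),z)^p\,\dd\lambda_\infty\,\dd x,
\]
which is exactly the right-hand side of \eqref{eq:anisotropic-directional-MS}. For the identity $\sigma_K(S_K)=n|K|$, apply \eqref{eq:K-polar-formula} to $f=\one_K$: this gives $|K|=\sigma_K(S_K)\int_0^1r^{n-1}\dd r$. The main obstacle is the uniform shift estimate in the absence of smoothness of $\|\cdot\|_K$. The triangle inequality bound $\bigl|\|y-x\|_K-\|y\|_K\bigr|\le\|x\|_K$, combined with the elementary estimate $|t^{-q}-1|\le C|t-1|$ for $t\in[1/2,3/2]$ and $q\le n+p$, should suffice.
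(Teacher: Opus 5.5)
Your proposal is correct and follows the paper's proof essentially step by step. It uses the same split of $\frac s2\mathcal E_{s,p}^{\Omega,K}(u)$ at a fixed radius $R>\max\{1,2\sup_{x\in\Omega}\|x\|_K\}$, the same triangle-inequality bound combined with the Lipschitz property of $t\mapsto t^{-n-sp}$ on $[1/2,3/2]$ (for both the far mass $\sigma_K(S_K)/p$ and the uniform shift of the profile from centre $0$ to centre $x$), and the same appeal to Theorem~\ref{thm:metric-tail-law} for the far term; your check of $\sigma_K(S_K)=n|K|$ via $f=\one_K$ simply makes explicit what the paper leaves implicit.
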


\begin{proof}
Set
\[
 M_{\Omega,K}:=\sup_{x\in\Omega}\|x\|_K
\]
and fix $R>\max\{1,2M_{\Omega,K}\}$. For $x\in\Omega$ consider the far kernel
\[
 K^K_{s,R}(x,\dd y)
 :=
 s\,\one_{\{\|y\|_K>R\}}
 \|x-y\|_K^{-n-sp}\,\dd y.
\]
Using the polar formula \eqref{eq:K-polar-formula}, its angular density is
\[
 w^K_{s,R,x}(\omega)
 :=
 s\int_R^\infty
 \frac{r^{n-1}}
      {\|r\omega-x\|_K^{n+sp}}
 \,\dd r.
\]
Since
\[
 \|r\omega-x\|_K
 =
 r\|\omega-x/r\|_K
\]
and
\[
 \left|
 \|\omega-x/r\|_K-1
 \right|
 \le \frac{\|x\|_K}{r}
 \le\frac12,
\]
the function $t\mapsto t^{-n-sp}$ is uniformly Lipschitz on the relevant
compact interval, uniformly for $0<s\le1$. Hence
\[
 \left|
 \|\omega-x/r\|_K^{-n-sp}-1
 \right|
 \le C\frac{\|x\|_K}{r},
\]
and therefore, uniformly for $x\in\Omega$ and $\omega\in S_K$,
\begin{equation}\label{eq:anisotropic-w-asymptotic}
 w^K_{s,R,x}(\omega)
 =
 s\int_R^\infty r^{-1-sp}\,\dd r+O(s)
 =
 \frac{R^{-sp}}p+O(s).
\end{equation}
Consequently, if
\[
 m^K_{s,R}(x):=K^K_{s,R}(x,\mathbb R^n),
\]
then
\begin{equation}\label{eq:anisotropic-far-mass}
 m^K_{s,R}
 \longrightarrow
 \frac{\sigma_K(S_K)}p
 =
 \frac{n|K|}{p}
 \qquad\text{in }L^\infty(\Omega),
\end{equation}
and the corresponding normalized angular densities converge uniformly to
\[
 \frac1{\sigma_K(S_K)}=\frac1{n|K|}.
\]
Define
\begin{equation*}
 \lambda_\infty^K
 :=
 (u_\infty)_\#
 \left(
 \frac{\sigma_K}{n|K|}
 \right).
\end{equation*}
The assumed $p$-integrability of $u_\infty$ gives
$\lambda_\infty^K\in\Pp(Y)$.
Let $\lambda^K_{s,R,x}$ denote the normalized target law of
$K^K_{s,R}(x,\cdot)$.  We claim that, for every $a\in Y$,
\begin{equation}\label{eq:anisotropic-profile-uniform}
 \sup_{x\in\Omega}
 \left|
 \left(\int_Yd_Y(a,z)^p\,\dd\lambda^K_{s,R,x}(z)\right)^{1/p}
 -\left(\int_Yd_Y(a,z)^p\,\dd\lambda_\infty^K(z)\right)^{1/p}
 \right|
 \longrightarrow0.
\end{equation}
Indeed, set
\[
 N^K_{s,R,x}(a)
 :=s\int_{\{\|y\|_K>R\}}
 \frac{d_Y(a,u(y))^p}{\|x-y\|_K^{n+sp}}\,\dd y.
\]
Condition \eqref{eq:anisotropic-energy-profile} and local $L^p$
integrability on the fixed annulus $1<\|y\|_K\le R$ give
\[
 N^K_{s,R,0}(a)
 \longrightarrow
 \frac1p\int_{S_K}d_Y(a,u_\infty(\omega))^p\,\dd\sigma_K(\omega).
\]
On every fixed annulus $R<\|y\|_K\le A$, the difference between the kernels
centred at $x$ and at the origin contributes $o(1)$, uniformly in
$x\in\Omega$.  For $\|y\|_K>A>2M_{\Omega,K}$, the triangle inequality for
$\|\cdot\|_K$ and the mean value theorem give
\[
 \left|
 \|x-y\|_K^{-n-sp}-\|y\|_K^{-n-sp}
 \right|
 \le C\frac{M_{\Omega,K}}{\|y\|_K}
 \|y\|_K^{-n-sp}.
\]
The same cutoff argument as in the proof of
Lemma~\ref{lem:directional-Wp-tail} therefore yields
\[
 \sup_{x\in\Omega}
 \left|N^K_{s,R,x}(a)-N^K_{s,R,0}(a)\right|\longrightarrow0.
\]
After division by $m^K_{s,R}(x)$ and use of
\eqref{eq:anisotropic-far-mass}, this proves
\eqref{eq:anisotropic-profile-uniform}; the case $a=y_0$ also shows that
$\lambda^K_{s,R,x}\in\Pp(Y)$ for small $s$.
By symmetry of $\|\cdot\|_K$,
\[
 \frac{s}{2}\mathcal E_{s,p}^{\Omega,K}(u)
 =
 \frac{s}{2}
 \iint_{\Omega\times\Omega}
 \frac{d_Y(u(x),u(y))^p}{\|x-y\|_K^{n+sp}}
 \,\dd y\,\dd x
\]
\[
 \qquad
 +s\int_\Omega\int_{\Omega^c\cap\{\|y\|_K<R\}}
 \frac{d_Y(u(x),u(y))^p}{\|x-y\|_K^{n+sp}}
 \,\dd y\,\dd x
 +s\int_\Omega\int_{\{\|y\|_K>R\}}
 \frac{d_Y(u(x),u(y))^p}{\|x-y\|_K^{n+sp}}
 \,\dd y\,\dd x .
\]
The first two terms tend to zero. Indeed, on $\|x-y\|_K<1$ this follows
from \eqref{eq:anisotropic-positive-order}, while on
$\|x-y\|_K\ge1$ both variables range in a bounded set and local
$L^p$ integrability provides an $s$-independent integrable bound.
For the far term, Theorem~\ref{thm:metric-tail-law},
\eqref{eq:anisotropic-far-mass}, and
\eqref{eq:anisotropic-profile-uniform} give
\begin{align*}
 &\lim_{s\downarrow0}
 s\int_\Omega\int_{\{\|y\|_K>R\}}
 \frac{d_Y(u(x),u(y))^p}{\|x-y\|_K^{n+sp}}
 \,\dd y\,\dd x
 \\
 &\qquad=
 \frac{n|K|}{p}
 \int_\Omega\int_Y
 d_Y(u(x),z)^p
 \,\dd\lambda_\infty^K(z)\,\dd x
 \\
 &\qquad=
 \frac1p
 \int_\Omega\int_{S_K}
 d_Y\bigl(u(x),u_\infty(\omega)\bigr)^p
 \,\dd\sigma_K(\omega)\,\dd x.
\end{align*}
This proves \eqref{eq:anisotropic-directional-MS}.
\end{proof}

\begin{corollary}
\label{cor:scalar-anisotropic-directional-MS}
Let $(Y,d_Y,y_0)=(\mathbb R,|\cdot|,0)$. Under the assumptions of Proposition \ref{prop:anisotropic-directional-MS} the following formula holds:
\begin{equation}\label{eq:scalar-anisotropic-directional-MS}
\lim_{s\downarrow0}
\frac{s}{2}
\iint_{Q_\Omega}
\frac{|u(x)-u(y)|^p}
     {\|x-y\|_K^{n+sp}}
\,\dd y\,\dd x
=
\frac1p
\int_\Omega\int_{S_K}
|u(x)-u_\infty(\omega)|^p
\,\dd\sigma_K(\omega)\,\dd x.
\end{equation}
\end{corollary}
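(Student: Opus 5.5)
This is a direct specialization of Proposition~\ref{prop:anisotropic-directional-MS}, so the plan is to verify its hypotheses in the scalar case and then read off the conclusion. First we check that $(\mathbb R,|\cdot|,0)$ is an admissible target: it is complete and separable, hence Polish, and pointed at $y_0=0$. Then $d_Y(u(y),a)=|u(y)-a|$ and $d_Y(u(x),u(y))=|u(x)-u(y)|$. Membership $u\in L^p_{\rm loc}(\mathbb R^n;\mathbb R)$ in the metric sense (namely $|u|\in L^p_{\rm loc}$) coincides with the usual scalar one. The integrability of $u_\infty$ becomes $u_\infty\in L^p(S_K,\sigma_K)$.

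Next, the anisotropic energy-profile condition \eqref{eq:anisotropic-energy-profile} reads, for every $a\in\mathbb R$,
\[
s\int_{\{\|y\|_K>1\}}\frac{|u(y)-a|^p}{\|y\|_K^{n+sp}}\,\dd y\longrightarrow\frac1p\int_{S_K}|a-u_\infty(\omega)|^p\,\dd\sigma_K(\omega).
\]
The positive-order finiteness \eqref{eq:anisotropic-positive-order} is the finiteness of the scalar localized Gagliardo-type energy at some $s_0\in(0,1)$. These are exactly the assumptions being carried over. Since the assumptions are literally those of Proposition~\ref{prop:anisotropic-directional-MS} with $Y=\mathbb R$, we substitute $d_Y(u(x),u_\infty(\omega))=|u(x)-u_\infty(\omega)|$ into \eqref{eq:anisotropic-directional-MS}, and \eqref{eq:scalar-anisotropic-directional-MS} follows.

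There is no genuine obstacle. The only point requiring a sentence is that the metric notions (Borel measurability, $L^p_{\rm loc}$ relative to the base point, and the Polish hypothesis) reduce to the classical scalar ones for $\mathbb R$ with base point $0$. A remark could add that when $u_\infty\equiv0$ and $u$ has compact support, the right-hand side becomes $\frac{n|K|}{p}\int_\Omega|u|^p\,\dd x$, using $\sigma_K(S_K)=n|K|$. This recovers Ludwig's formula, but that observation is not needed for the proof itself.
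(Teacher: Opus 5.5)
Your proposal is correct and matches the paper. The corollary is the immediate specialization of Proposition~\ref{prop:anisotropic-directional-MS} to the Polish pointed target $(\mathbb R,|\cdot|,0)$, with $d_Y(u(x),u_\infty(\omega))=|u(x)-u_\infty(\omega)|$ substituted into the conclusion, and the paper gives no argument beyond this.
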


\begin{remark}
If $u_\infty\equiv0$, then \eqref{eq:scalar-anisotropic-directional-MS}
reduces to
\[
\lim_{s\downarrow0}
\frac{s}{2}
\iint_{Q_\Omega}
\frac{|u(x)-u(y)|^p}
     {\|x-y\|_K^{n+sp}}
\,\dd y\,\dd x
=
\frac{n|K|}{p}
\int_\Omega |u(x)|^p\,\dd x,
\]
since $\sigma_K(S_K)=n|K|$. When $u$ is compactly supported in $\Omega$,
the formula is Ludwig's anisotropic Maz'ya--Shaposhnikova theorem
\cite[Theorem~2]{Ludwig2014}. The localized statement also allows
nontrivial exterior data and retains the $p$-distance profile of the
anisotropic distribution at infinity.
\end{remark}
\section{Abstract small-order nonlocal energies}\label{abstract}
We formulate the short-range/far-range decomposition for singular nonlocal
energies. Let $\varepsilon\downarrow0$ be a parameter and let
$\Lambda_\varepsilon$ be a positive Borel measure on $X\times X$.  Define
\begin{equation*}
 \mathcal E_\varepsilon(u)
 :=\int_{X\times X}d_Y(u(x),u(y))^p\,\dd\Lambda_\varepsilon(x,y).
\end{equation*}
We assume that
\begin{equation}\label{eq:near-far-decomposition}
 \Lambda_\varepsilon
 =\Lambda_\varepsilon^{\mathrm{near}}
 +\Lambda_\varepsilon^{\mathrm{far}}
\end{equation}
with both terms positive, and let $c_\varepsilon>0$ be a normalization factor chosen so that the rescaled energies
$c_\varepsilon \mathcal{E}_\varepsilon$ admit a nontrivial limit as $\varepsilon \downarrow0$.

\begin{proposition}
\label{thm:near-far}
Let $u\in L^p(X;Y)$.  Assume:
\begin{enumerate}[label=\textup{(A\arabic*)}]
\item \label{ass:near}
\begin{equation}\label{eq:minimal-near}
 c_\varepsilon
 \int_{X\times X}d_Y(u(x),u(y))^p
 \,\dd\Lambda_\varepsilon^{\mathrm{near}}(x,y)
 \longrightarrow0;
\end{equation}
\item \label{ass:far-marginals}
$c_\varepsilon\Lambda_\varepsilon^{\mathrm{far}}$ has marginals
$a_\varepsilon\mu$ and $b_\varepsilon\mu$, where
$a_\varepsilon,b_\varepsilon,a_\infty,b_\infty\in L^\infty_+(X,\mu)$ and
\[
 \sup_\varepsilon
 \bigl(\|a_\varepsilon\|_\infty+\|b_\varepsilon\|_\infty\bigr)<\infty,
 \qquad
 a_\varepsilon\stackrel{*}{\rightharpoonup}a_\infty,
 \qquad
 b_\varepsilon\stackrel{*}{\rightharpoonup}b_\infty
 \quad\text{in }L^\infty(X,\mu);
\]
\item \label{ass:far-escape}
$c_\varepsilon\Lambda_\varepsilon^{\mathrm{far}}$ escapes from bounded
rectangles.
\end{enumerate}
Then
\begin{equation}\label{eq:near-far-limit}
 c_\varepsilon\mathcal E_\varepsilon(u)
 \longrightarrow
 \int_X(a_\infty+b_\infty)d_Y(u,y_0)^p\,\dd\mu.
\end{equation}
\end{proposition}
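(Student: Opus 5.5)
The plan is to split the rescaled energy along the decomposition \eqref{eq:near-far-decomposition} and reduce the far part to Theorem~\ref{thm:abstract-decoupling}. Since both pieces are positive measures and the integrand is nonnegative, linearity of the integral gives
\[
 c_\varepsilon\mathcal E_\varepsilon(u)
 =c_\varepsilon\int_{X\times X}d_Y(u(x),u(y))^p\,\dd\Lambda_\varepsilon^{\mathrm{near}}
 +J_{c_\varepsilon\Lambda_\varepsilon^{\mathrm{far}}}(u),
\]
where $J_\Pi$ is the increment energy of Section~\ref{metric}. Both terms are in $[0,\infty]$. By \ref{ass:far-marginals}, $c_\varepsilon\Lambda^{\mathrm{far}}_\varepsilon$ is an $(a_\varepsilon,b_\varepsilon)$-marginal coupling. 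Hence \eqref{eq:J-Pi-bound} shows the second term is finite, and the first term is finite for small $\varepsilon$ by \ref{ass:near}. So the splitting involves no $\infty-\infty$ ambiguity.

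For the near term, assumption \ref{ass:near} says directly that it tends to $0$. For the far term, the family $(c_\varepsilon\Lambda^{\mathrm{far}}_\varepsilon)_\varepsilon$ is indexed by the directed set $\varepsilon\downarrow0$. It satisfies the marginal hypotheses \eqref{eq:marginal-assumptions} by \ref{ass:far-marginals}, and it escapes from bounded rectangles by \ref{ass:far-escape}. The implication \ref{item:escape-iff}$\Rightarrow$\ref{item:allmaps-iff} of Theorem~\ref{thm:abstract-decoupling} then gives
\[
 J_{c_\varepsilon\Lambda_\varepsilon^{\mathrm{far}}}(u)\longrightarrow\int_X(a_\infty+b_\infty)d_Y(u,y_0)^p\,\dd\mu .
\]
That implication uses only the marginal structure, so the two-point hypothesis on $Y$ is not needed here. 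If $Y$ is a single point, both sides are zero anyway.

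Adding the two limits yields \eqref{eq:near-far-limit}. There is no real obstacle. The only points to check are that the measure-theoretic splitting is legitimate, which holds because nonnegative integrands are additive in the measure, and that the weak-* hypotheses are stated for the rescaled far measure rather than for $\Lambda^{\mathrm{far}}_\varepsilon$ itself, which is exactly the form Theorem~\ref{thm:abstract-decoupling} needs.
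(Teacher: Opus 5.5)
Your proposal is correct and matches the paper's proof: you split along the near/far decomposition, use (A1) to send the near term to zero, and apply the implication (i)$\Rightarrow$(ii) of Theorem~\ref{thm:abstract-decoupling} to the rescaled far couplings $c_\varepsilon\Lambda_\varepsilon^{\mathrm{far}}$. Your side remarks are accurate points the paper leaves implicit: there is no $\infty-\infty$ issue, and the two-point hypothesis on $Y$ is used only in the direction (iii)$\Rightarrow$(i).
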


\begin{proof}
By \eqref{eq:near-far-decomposition},
\begin{align*}
 c_\varepsilon\mathcal E_\varepsilon(u)
 =c_\varepsilon
 \int d_Y(u(x),u(y))^p\,\dd\Lambda_\varepsilon^{\mathrm{near}} +
 \int d_Y(u(x),u(y))^p
 \,\dd(c_\varepsilon\Lambda_\varepsilon^{\mathrm{far}}).
\end{align*}
The first term tends to zero by \eqref{eq:minimal-near}. The second tends
to the right-hand side of \eqref{eq:near-far-limit} by
Theorem~\ref{thm:abstract-decoupling}.  This proves \eqref{eq:near-far-limit}.
\end{proof}

Besides $u\in L^p(X;Y)$, the only regularity assumption is
\eqref{eq:minimal-near}; belonging to a specified Sobolev space is not
required. Fractional Sobolev regularity at one positive order is sufficient
for a large class of kernels.

\subsection{A general fractional sufficient condition}

Let $k:X\times X\to[0,\infty]$ be a Borel function independent of
$s$; no symmetry is required.  For $0<s<1$ define the functional
\begin{equation}\label{eq:k-fractional}
 [u]_{s,p;k,<1}^p
 :=\int_{\{0<d_X(x,y)<1\}}
 \frac{d_Y(u(x),u(y))^p}{d_X(x,y)^{sp}}
 k(x,y)\,\dd\mu(y)\,\dd\mu(x).
\end{equation}

\begin{proposition}
\label{prop:fractional-general}
Assume that for some $0<\theta<1$,
\begin{equation}\label{eq:theta-finite-general}
 [u]_{\theta,p;k,<1}<\infty.
\end{equation}
Then
\begin{equation*}
 \lim_{s\downarrow0}s[u]_{s,p;k,<1}^p=0.
\end{equation*}
\end{proposition}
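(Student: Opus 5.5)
The argument is a pointwise monotonicity comparison in $s$. On the integration region $\{0<d_X(x,y)<1\}$ we have $d_X(x,y)^{-sp}\le d_X(x,y)^{-\theta p}$ whenever $0<s\le\theta$, because $t\mapsto t^{-sp}$ is increasing in $s$ for $t\in(0,1)$. The integrand $d_Y(u(x),u(y))^p k(x,y)\ge0$ does not depend on $s$, so
\[
 [u]_{s,p;k,<1}^p\le [u]_{\theta,p;k,<1}^p<\infty
 \qquad\text{for every }0<s\le\theta .
\]
Multiplying by $s$ then gives
\[
 0\le s[u]_{s,p;k,<1}^p\le s[u]_{\theta,p;k,<1}^p\longrightarrow0
 \qquad(s\downarrow0),
\]
which is the claim.

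The steps, in order, are the following. First, observe that the domain of integration in \eqref{eq:k-fractional} is fixed and independent of $s$. Second, establish the pointwise kernel comparison $d_X(x,y)^{-sp}\le d_X(x,y)^{-\theta p}$ there for $s\le\theta$. Third, integrate this inequality against the nonnegative Borel weight $d_Y(u(x),u(y))^p k(x,y)\,\dd\mu(y)\,\dd\mu(x)$; this is legitimate by monotonicity of the integral for $[0,\infty]$-valued functions. Finally, use hypothesis \eqref{eq:theta-finite-general} to conclude that the resulting bound is finite and independent of $s$, so the factor $s$ drives the product to zero.

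There is essentially no obstacle. The only points needing care are that no symmetry of $k$ and no integrability of $u$ beyond \eqref{eq:theta-finite-general} is used, and that measurability of the integrand holds since $u$, $k$ and $d_X$ are Borel.
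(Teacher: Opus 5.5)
Your proposal is correct and follows the paper's own proof: on $\{0<d_X(x,y)<1\}$ you use $d_X(x,y)^{-sp}\le d_X(x,y)^{-\theta p}$ for $s\le\theta$ to get $0\le s[u]_{s,p;k,<1}^p\le s[u]_{\theta,p;k,<1}^p\to0$. The only differences are cosmetic: you allow $s=\theta$, while the paper takes $s<\theta$, and you add remarks on measurability.
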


\begin{proof}
For $0<s<\theta$ and $0<d_X(x,y)<1$,
\[
 d_X(x,y)^{-sp}\le d_X(x,y)^{-\theta p}.
\]
Therefore
\[
 0\le s[u]_{s,p;k,<1}^p
 \le s[u]_{\theta,p;k,<1}^p\longrightarrow0.
\]
\end{proof}
\begin{remark}
Proposition~\ref{prop:fractional-general} is stated under the local
assumption \eqref{eq:theta-finite-general}, since only the interaction near
the diagonal is relevant for \eqref{eq:minimal-near}. In many standard
situations one may impose the stronger global assumption

$$
u\in L^p(X;Y), \qquad
\int_{X\times X}
\frac{d_Y(u(x),u(y))^p}{d_X(x,y)^{\theta p}}
\,k(x,y)\,d\mu(y)\,d\mu(x)<\infty.
$$

This hypothesis implies \eqref{eq:theta-finite-general}, and hence the
conclusion of Proposition~\ref{prop:fractional-general}. For the usual
Gagliardo-type kernels on spaces with suitable volume growth, the
contribution of \(d_X(x,y)\ge1\) is controlled by the \(L^p\)-norm of
\(u\). In that setting the global formulation is equivalent, up to the
\(L^p\) requirement, to the local assumption above.
\end{remark}
\begin{remark}
A common metric-measure choice, for $x\ne y$, is
\begin{equation*}
 k_X(x,y)
 :=\frac12\left(
 \frac1{\mu(B(x,d_X(x,y)))}
 +\frac1{\mu(B(y,d_X(x,y)))}
 \right),
\end{equation*}
and one may set $k_X(x,x)=0$.  This formula is used whenever the ball
measures are positive and finite.  On a $Q$-Ahlfors
regular space this kernel is comparable to $d_X(x,y)^{-Q}$, so
\eqref{eq:k-fractional} is comparable with the usual Gagliardo kernel
$d_X(x,y)^{-Q-sp}$. 
\end{remark}
\subsection{Application: fractional perimeters as Bernoulli tail laws}

For characteristic functions, the target law is a Bernoulli law. The
following proposition identifies its parameter with the mass at infinity.

\begin{proposition}
\label{prop:Bernoulli-tail-perimeter}
Let $\Omega\subset\mathbb R^n$ be bounded and open and let
$E\subset\mathbb R^n$ be measurable.  For $0<s<1$ write
\[
 L_s(A,B)
 :=
 \int_A\int_B\frac{1}{|x-y|^{n+s}}\,\dd y\,\dd x
\]
and
\[
 \operatorname{Per}_s(E;\Omega)
 :=
 L_s(E\cap\Omega,E^c\cap\Omega)
 +L_s(E\cap\Omega,E^c\cap\Omega^c)
 +L_s(E\cap\Omega^c,E^c\cap\Omega).
\]
Assume that
\[
 \operatorname{Per}_{s_0}(E;\Omega)<\infty
\]
for some $s_0\in(0,1)$ and that the limit
\begin{equation}\label{eq:alpha-E-tail}
 \alpha(E)
 :=
 \lim_{s\downarrow0}
 s\int_{E\cap B_1^c}\frac{\dd y}{|y|^{n+s}}
\end{equation}
exists.  Set
\[
 \sigma_{n-1}:=\mathcal H^{n-1}(S^{n-1}),
 \qquad
 \vartheta_E:=\frac{\alpha(E)}{\sigma_{n-1}}\in[0,1].
\]
Then
\begin{equation}\label{eq:DFPV-Bernoulli-limit}
 \lim_{s\downarrow0}
 s\,\operatorname{Per}_s(E;\Omega)
 =
 \sigma_{n-1}
 \left[
 (1-\vartheta_E)|E\cap\Omega|
 +
 \vartheta_E|E^c\cap\Omega|
 \right].
\end{equation}
Equivalently, if
\begin{equation}\label{eq:Bernoulli-tail-law}
 \lambda_\infty^E
 :=
 (1-\vartheta_E)\delta_0+\vartheta_E\delta_1,
\end{equation}
then
\begin{equation}\label{eq:DFPV-tail-law-form}
 \lim_{s\downarrow0}
 s\,\operatorname{Per}_s(E;\Omega)
 =
 \sigma_{n-1}
 \int_\Omega\int_{\{0,1\}}
 |\one_E(x)-z|
 \,\dd\lambda_\infty^E(z)\,\dd x.
\end{equation}
In particular, the mass at infinity appearing in the fractional-perimeter
formula of Dipierro--Figalli--Palatucci--Valdinoci
\cite[Theorem~2.5]{DipierroFigalliPalatucciValdinoci2013}
is precisely the parameter of a Bernoulli target law at infinity.
\end{proposition}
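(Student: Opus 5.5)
The plan is to reduce the statement to Theorem~\ref{thm:directional-metric-MS} with $d=n$, $p=1$, $Y=\{0,1\}\subset\mathbb R$ (Polish, base point $y_0=0$), and $u=\one_E$. Since $|\one_E(x)-\one_E(y)|=1$ exactly when one of $x,y$ lies in $E$ and the other in $E^c$, we get
\[
\mathcal E^{\Omega}_{s,1}(\one_E)=2\operatorname{Per}_s(E;\Omega).
\]
Indeed, in the double integral over $Q_\Omega$, the pairs in $\Omega\times\Omega$ with $x\in E$, $y\in E^c$ and those with $x\in E^c$, $y\in E$ contribute $2L_s(E\cap\Omega,E^c\cap\Omega)$. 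The pairs with exactly one point in $\Omega$ contribute, after using the symmetry of the kernel, $2L_s(E\cap\Omega,E^c\cap\Omega^c)+2L_s(E\cap\Omega^c,E^c\cap\Omega)$. Hence $\frac s2\mathcal E^\Omega_{s,1}(\one_E)=s\operatorname{Per}_s(E;\Omega)$, and finiteness at $s_0$ transfers directly. Local $L^1$ integrability is trivial.

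The main step is to verify $\mathrm{EP}_1$ for a suitable $u_\infty$. For $a\in\{0,1\}$ we have $d_Y(\one_E(y),a)=\one_{E}(y)$ if $a=0$ and $\one_{E^c}(y)$ if $a=1$. Therefore
\[
\mathcal A^{u}_{s,1}(0)\to\alpha(E),\qquad \mathcal A^u_{s,1}(1)=s\int_{B_1^c}|y|^{-n-s}\,\dd y-\mathcal A^u_{s,1}(0)\to\sigma_{n-1}-\alpha(E),
\]
using $s\int_{B_1^c}|y|^{-n-s}\dd y=\sigma_{n-1}$. This also gives $0\le\alpha(E)\le\sigma_{n-1}$, so $\vartheta_E\in[0,1]$.

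Next we choose a Borel $u_\infty:\mathbb S^{n-1}\to\{0,1\}$ with $\mathcal H^{n-1}(\{u_\infty=1\})=\alpha(E)$. For instance, take $u_\infty=\one_{\{\theta_n>t\}}$ with $t$ chosen by continuity of the cap measure in $t$. Then
\[
\int_{\mathbb S^{n-1}}|0-u_\infty|\,\dd\mathcal H^{n-1}=\alpha(E),\qquad \int_{\mathbb S^{n-1}}|1-u_\infty|\,\dd\mathcal H^{n-1}=\sigma_{n-1}-\alpha(E),
\]
which is exactly \eqref{eq:uniform-directional-tail} with $p=1$. The subtle point is that $E$ need not have an angular limit; this does not matter, because the profile condition only needs the pushforward law $\lambda_\infty=(u_\infty)_\#(\mathcal H^{n-1}/\sigma_{n-1})=\lambda_\infty^E$. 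This is the one place where the argument relies on the remark that only the energy profile matters.

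Theorem~\ref{thm:directional-metric-MS} then yields
\[
\lim_{s\downarrow 0} s\operatorname{Per}_s(E;\Omega)=\int_\Omega\int_{\mathbb S^{n-1}}|\one_E(x)-u_\infty(\theta)|\,\dd\mathcal H^{n-1}(\theta)\,\dd x=\sigma_{n-1}\int_\Omega\int|\one_E(x)-z|\,\dd\lambda^E_\infty(z)\,\dd x,
\]
which is \eqref{eq:DFPV-tail-law-form}. Splitting according to $x\in E$ or $x\in E^c$ gives $\sigma_{n-1}[(1-\vartheta_E)|E\cap\Omega|+\vartheta_E|E^c\cap\Omega|]$, i.e.\ \eqref{eq:DFPV-Bernoulli-limit}.

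The main points to check are the combinatorial identity relating $\mathcal E^\Omega_{s,1}$ to $\operatorname{Per}_s$, and the existence of $u_\infty$ realizing an arbitrary mass $\alpha(E)\in[0,\sigma_{n-1}]$. For $n=1$ the second point fails, since $\mathbb S^0$ consists of two atoms and the continuity argument is unavailable. In that case we would apply Theorem~\ref{thm:metric-tail-law} directly with the far kernel and the Bernoulli law instead, following the proof of Theorem~\ref{thm:directional-metric-MS} and replacing Lemma~\ref{lem:directional-Wp-tail} by the same shift argument applied to the two profiles computed above.
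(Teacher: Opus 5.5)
Your argument is correct. For $n\ge2$ it takes a different route from the paper; for $n=1$ your fallback is essentially the paper's argument.

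The paper never passes through Theorem~\ref{thm:directional-metric-MS}. It works directly with the far kernel $K_{s,R}(x,\dd y)=s\one_{\{|y|>R\}}|x-y|^{-n-s}\dd y$. It shows that $K_{s,R}(x,\mathbb R^n)\to\sigma_{n-1}$ and $K_{s,R}(x,E)\to\alpha(E)$ uniformly in $x\in\Omega$, using a mean-value shift estimate. Hence the Bernoulli parameter of the normalized push-forward under $\one_E$ converges uniformly to $\vartheta_E$. The near part is removed using $\operatorname{Per}_{s_0}(E;\Omega)<\infty$ and the boundedness of $\Omega\times B_R$. The paper then applies Theorem~\ref{thm:metric-tail-law} with $Y=\{0,1\}$.

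You instead verify $\mathrm{EP}_1$ by computing the two profiles, $\alpha(E)$ and $\sigma_{n-1}-\alpha(E)$. You then build a fictitious angular limit, the indicator of a spherical cap of measure $\alpha(E)$, whose push-forward of normalized spherical measure is $\lambda_\infty^E$. The shift estimate, the far cutoff and the near-field removal are then all inherited from Theorem~\ref{thm:directional-metric-MS}.

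Comparing the two:
\begin{enumerate}[label=\textup{(\roman*)}]
\item Your route is shorter once the directional theorem is available. It also makes explicit that the Dipierro--Figalli--Palatucci--Valdinoci formula is a special case of that theorem, with the cap standing in for an angular representative that $E$ itself need not have.
\item The cost is that you need an atomless sphere. This forces the separate treatment of $n=1$, where only $\vartheta_E\in\{0,\tfrac12,1\}$ can be realized on $\mathbb S^0$.
\item The paper's route is uniform in $n\ge1$ and needs no artificial $u_\infty$.
\end{enumerate}

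You could remove the case split altogether. The proof of Theorem~\ref{thm:directional-metric-MS} uses $u_\infty$ only through the limit profile $a\mapsto\lim_{s}\mathcal A^u_{s,p}(a)$. It therefore holds verbatim with an arbitrary $\lambda_\infty\in\mathcal P_p(Y)$ in place of the push-forward of normalized spherical measure, and that version is precisely what the paper's direct argument uses.
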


\begin{proof}
Observe first that
\begin{equation}\label{eq:perimeter-characteristic-energy}
 \iint_{Q_\Omega}
 \frac{|\one_E(x)-\one_E(y)|}{|x-y|^{n+s}}
 \,\dd y\,\dd x
 =
 2\,\operatorname{Per}_s(E;\Omega).
\end{equation}
Fix $R>1$ so large that
$R>2\sup_{x\in\Omega}|x|$.
For $x\in\Omega$ consider the kernel
\[
 K_{s,R}(x,\dd y)
 :=
 s\,\one_{\{|y|>R\}}|x-y|^{-n-s}\,\dd y.
\]
As in Lemma~\ref{lem:far-angular-equidistribution},
\begin{equation}\label{eq:perimeter-far-mass}
 K_{s,R}(x,\mathbb R^n)
 \longrightarrow\sigma_{n-1}
\end{equation}
uniformly in $x\in\Omega$.
Since $R$ is fixed,
\[
 s\int_{E\cap\{1<|y|\le R\}}
 |y|^{-n-s}\,\dd y\longrightarrow0,
\]
and hence \eqref{eq:alpha-E-tail} implies
\begin{equation*}
 s\int_{E\cap\{|y|>R\}}
 |y|^{-n-s}\,\dd y
 \longrightarrow\alpha(E).
\end{equation*}
Moreover, uniformly for $x\in\Omega$,
\[
 s\int_{\{|y|>R\}}
 \left|
 |x-y|^{-n-s}-|y|^{-n-s}
 \right|
 \,\dd y
 \longrightarrow0.
\]
Indeed, for $|y|>R$ the mean value theorem and
$|x|\le R/2$ give
\[
 \left|
 |x-y|^{-n-s}-|y|^{-n-s}
 \right|
 \le C|x|\,|y|^{-n-s-1},
\]
and the last expression is integrable with an additional factor $s$.
Consequently,
\begin{equation}\label{eq:E-far-mass-alpha}
 \sup_{x\in\Omega}
 \left|
 s\int_{E\cap\{|y|>R\}}
 |x-y|^{-n-s}\,\dd y-\alpha(E)
 \right|
 \longrightarrow0.
\end{equation}

The normalized push-forward of $K_{s,R}(x,\cdot)$ under $\one_E$ is a Bernoulli probability measure. By
\eqref{eq:perimeter-far-mass} and \eqref{eq:E-far-mass-alpha},
its parameter converges uniformly to
$\vartheta_E=\alpha(E)/\sigma_{n-1}$; hence the corresponding target
laws converge to \eqref{eq:Bernoulli-tail-law}.

It remains only to remove the far cutoff.  On
$|x-y|<1$, for $0<s<s_0$,
\[
 |x-y|^{-n-s}\le |x-y|^{-n-s_0},
\]
and therefore the bounded-range contribution, multiplied by $s$, tends
to zero by the assumption
$\operatorname{Per}_{s_0}(E;\Omega)<\infty$.  On the remaining bounded
region $|x-y|\ge1$, the same conclusion follows directly from boundedness
of the integration region.

Applying Theorem~\ref{thm:metric-tail-law} with the target
$Y=\{0,1\}$, and using \eqref{eq:perimeter-characteristic-energy}, gives
\eqref{eq:DFPV-tail-law-form}.  Finally,
\[
 \int_{\{0,1\}}|\one_E(x)-z|\,\dd\lambda_\infty^E(z)
 =
 (1-\vartheta_E)\one_E(x)
 +\vartheta_E\one_{E^c}(x),
\]
which yields \eqref{eq:DFPV-Bernoulli-limit}.
\end{proof}

\section{Escaping transition kernels and semigroup Besov energies}
\label{sec:semigroups}

We now specialize the abstract coupling theorem to transition kernels.  Let
$q_t:X\times X\to[0,\infty)$ be a Borel kernel for every $t>0$, and define the
measure
\begin{equation}\label{eq:Pi-qt}
 \dd\Pi_t(x,y):=q_t(x,y)\,\dd\mu(x)\,\dd\mu(y).
\end{equation}
For $t\ge1$, define the marginal densities
\begin{equation*}
 a_t(x):=\int_Xq_t(x,y)\,\dd\mu(y),
 \qquad
 b_t(y):=\int_Xq_t(x,y)\,\dd\mu(x),
\end{equation*}
and assume that $a_t,b_t\in L^\infty_+(X,\mu)$. Then $\Pi_t$ is an
$(a_t,b_t)$-marginal coupling.
For $u\in L^p(X;Y)$ set
\begin{equation*}
 J_p(t,u):=
 \int_X\int_Xd_Y(u(x),u(y))^p
 q_t(x,y)\,\dd\mu(y)\,\dd\mu(x).
\end{equation*}

\begin{corollary}
\label{cor:transition-decoupling}
Assume that $a_t,b_t,a_\infty,b_\infty\in L^\infty_+(X,\mu)$ and
\[
 \sup_{t\ge1}\bigl(\|a_t\|_\infty+\|b_t\|_\infty\bigr)<\infty,
 \qquad
 a_t\stackrel{*}{\rightharpoonup}a_\infty,
 \qquad
 b_t\stackrel{*}{\rightharpoonup}b_\infty
 \quad\text{in }L^\infty(X,\mu),
\]
and, for every bounded $K\subset X$,
\begin{equation*}
 \int_K\int_Kq_t(x,y)\,\dd\mu(y)\,\dd\mu(x)
 \longrightarrow0.
\end{equation*}
Then for every $u\in L^p(X;Y)$,
\begin{equation*}
 J_p(t,u)\longrightarrow
 \int_X(a_\infty+b_\infty)d_Y(u,y_0)^p\,\dd\mu
 \qquad(t\to\infty).
\end{equation*}
\end{corollary}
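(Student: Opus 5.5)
The plan is to reduce the statement to Theorem~\ref{thm:abstract-decoupling}, applied to the net $(\Pi_t)_{t\ge1}$ defined in \eqref{eq:Pi-qt}. We index by the directed set $[1,\infty)$ with its usual order, so that limits along the net are exactly limits as $t\to\infty$.

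\emph{Marginals.} First we check that each $\Pi_t$ is an $(a_t,b_t)$-marginal coupling in the sense of Definition~\ref{def:bounded-marginal-coupling}. Take a nonnegative Borel function $\varphi$. Since $q_t\ge0$ is Borel and $\mu$ is $\sigma$-finite (it is Radon and finite on bounded sets of the proper space $X$), Tonelli's theorem gives
\[
\int_{X\times X}\varphi(x)\,\dd\Pi_t=\int_X\varphi(x)\Bigl(\int_Xq_t(x,y)\,\dd\mu(y)\Bigr)\dd\mu(x)=\int_Xa_t\varphi\,\dd\mu .
\]
The same computation in the other variable gives the second marginal $b_t\mu$. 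Measurability of $a_t$ and $b_t$ also follows from Tonelli.

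\emph{Hypotheses of the theorem.} The uniform bound $\sup_{t\ge1}(\|a_t\|_\infty+\|b_t\|_\infty)<\infty$ and the weak-* convergences $a_t\stackrel{*}{\rightharpoonup}a_\infty$, $b_t\stackrel{*}{\rightharpoonup}b_\infty$ are assumed, so \eqref{eq:marginal-assumptions} holds along the net. The hypothesis on double integrals over $K\times K$ says, again by Tonelli, exactly that $\Pi_t(K\times K)\to0$ for every bounded Borel $K$. This is escape from bounded rectangles in the sense of Definition~\ref{def:escape-rectangles}. The assumption is stated for bounded $K$, and we use it only for Borel ones, which is what the definition requires.

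\emph{Conclusion.} Since $J_p(t,u)=J_{\Pi_t}(u)$, the implication (i)$\Rightarrow$(ii) of Theorem~\ref{thm:abstract-decoupling} gives the claimed limit for every $u\in L^p(X;Y)$.

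One detail needs care. Theorem~\ref{thm:abstract-decoupling} is stated with the assumption that $Y$ has at least two points, but that assumption is used only for the converse implications. If $Y$ is a single point, both sides of the claimed limit are zero, so the case is trivial. Alternatively, we can quote only the proof of (i)$\Rightarrow$(ii), which never uses this hypothesis. Apart from this, there is no real obstacle: the only points to verify are the Tonelli identifications and the fact that the continuous parameter $t\to\infty$ is a directed-set limit.
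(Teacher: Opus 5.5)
Your proposal is correct and takes the same route as the paper, whose proof simply applies Theorem~\ref{thm:abstract-decoupling} to the couplings \eqref{eq:Pi-qt}. Your Tonelli checks of the marginals and of escape from bounded rectangles, and your remark on a one-point target, make explicit what the paper leaves implicit; the paper treats the one-point case the same way in Lemma~\ref{lem:Carnot-translation-decoupling}.
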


\begin{proof}
Apply Theorem~\ref{thm:abstract-decoupling} to the measures \eqref{eq:Pi-qt}.
\end{proof}

The following semigroup-type Besov functional converts the large-time
behaviour of $J_p(t,u)$ into a small-order asymptotic problem.

\begin{definition}
For $s>0$ and $\beta>0$ define
\begin{equation}\label{eq:Besov-beta}
 \mathscr N_{s,p}^{q,\beta}(u)^p
 :=\int_0^\infty t^{-\beta sp-1}J_p(t,u)\,\dd t,
\end{equation}
whenever the right-hand side is finite.
\end{definition}

\begin{theorem}
\label{thm:Abel-semigroup}
Assume the hypotheses of Corollary~\ref{cor:transition-decoupling}.  Let
$u\in L^p(X;Y)$ and suppose that
\begin{equation}\label{eq:short-time-minimal-beta}
 \lim_{s\downarrow0}
 s\int_0^1t^{-\beta sp-1}J_p(t,u)\,\dd t=0.
\end{equation}
Then
\begin{equation}\label{eq:Abel-limit}
 \lim_{s\downarrow0}s\mathscr N_{s,p}^{q,\beta}(u)^p
 =\frac1{\beta p}\int_X(a_\infty+b_\infty)
 d_Y(u,y_0)^p\,\dd\mu.
\end{equation}
Since only the limit as $s\downarrow0$ is considered, it is enough to
assume that the integral defining $\mathscr N^{q,\beta}_{s,p}(u)$ is finite for
all sufficiently small $s>0$.
\end{theorem}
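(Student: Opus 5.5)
The plan is to split the $t$-integral at $t=1$ and treat the large-time part by an Abelian argument. Write
\[
 s\mathscr N_{s,p}^{q,\beta}(u)^p
 = s\int_0^1 t^{-\beta sp-1}J_p(t,u)\,\dd t
 + s\int_1^\infty t^{-\beta sp-1}J_p(t,u)\,\dd t .
\]
By hypothesis \eqref{eq:short-time-minimal-beta}, the first term tends to zero. It remains to show that the second term converges to $\frac{1}{\beta p}J_\infty$, where
\[
 J_\infty:=\int_X(a_\infty+b_\infty)d_Y(u,y_0)^p\,\dd\mu .
\]
The finiteness caveat in the statement is used only so that the whole expression makes sense for small $s$. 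The large-time piece is finite anyway, by the bound below.

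For the large-time piece, Corollary~\ref{cor:transition-decoupling} gives $J_p(t,u)\to J_\infty$ as $t\to\infty$. Moreover, \eqref{eq:J-Pi-bound} gives
\[
 0\le J_p(t,u)\le 2^{p-1}\sup_{t\ge1}\bigl(\|a_t\|_\infty+\|b_t\|_\infty\bigr)M_p(u)=:C<\infty
 \qquad\text{for all } t\ge1 .
\]
The key identity is
\[
 s\int_1^\infty t^{-\beta sp-1}\,\dd t=\frac1{\beta p}.
\]
So we need
\[
 s\int_1^\infty t^{-\beta sp-1}\bigl(J_p(t,u)-J_\infty\bigr)\,\dd t\to0 .
\]
Given $\eta>0$, choose $T\ge1$ with $|J_p(t,u)-J_\infty|\le\eta$ for $t\ge T$. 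Split the integral at $T$:
\begin{itemize}
\item On $[1,T]$ the integrand is bounded by $(C+J_\infty)$ times a bounded weight. The contribution is therefore at most $s(C+J_\infty)\log T\to0$.
\item On $[T,\infty)$ the contribution is at most $\eta\, s\int_1^\infty t^{-\beta sp-1}\,\dd t=\eta/(\beta p)$.
\end{itemize}
Taking $\limsup_{s\downarrow0}$ and then letting $\eta\to0$ gives the claim.

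Measurability of $t\mapsto J_p(t,u)$ follows from Tonelli, assuming $(t,x,y)\mapsto q_t(x,y)$ is jointly Borel, as is implicit in the definition of \eqref{eq:Besov-beta}.

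There is no real obstacle. The only delicate points are the uniform bound on $J_p(t,u)$ for $t\ge1$, which is where the uniform $L^\infty$ control of the marginals enters, and isolating the short-time contribution, which is exactly what assumption \eqref{eq:short-time-minimal-beta} is designed to handle.
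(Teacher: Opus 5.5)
Your proof is correct and follows the paper's argument. The paper splits at $t=1$ and normalizes the large-time weight into the probability measure $\beta sp\,t^{-\beta sp-1}\one_{(1,\infty)}(t)\,\dd t$. It then combines the uniform bound on $J_p(t,u)$ for $t\ge1$ with the fact that this measure gives vanishing mass to $(1,T)$, which is exactly your splitting at $T$ with the $s\log T$ estimate.
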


\begin{proof}
Split the integral at $t=1$.  The contribution from $(0,1)$ tends to zero
by \eqref{eq:short-time-minimal-beta}.  On $(1,\infty)$ define
\begin{equation*}
 \dd\eta_s(t):=
 \beta sp\,t^{-\beta sp-1}\one_{(1,\infty)}(t)\,\dd t.
\end{equation*}
Then $\eta_s$ is a probability measure because
\[
 \beta sp\int_1^\infty t^{-\beta sp-1}\,\dd t=1.
\]
For every $T>1$,
\[
 \eta_s((1,T))
 =1-T^{-\beta sp}\longrightarrow0,
\]
so $\eta_s$ escapes to $+\infty$.  By
Corollary~\ref{cor:transition-decoupling},
\[
 J_p(t,u)\longrightarrow
 L:=\int_X(a_\infty+b_\infty)d_Y(u,y_0)^p\,\dd\mu.
\]
The marginal bound and \eqref{eq:J-Pi-bound} give
\[
 0\le J_p(t,u)
 \le2^{p-1}\bigl(\|a_t\|_\infty+\|b_t\|_\infty\bigr)\Mp(u),
\]
so $J_p(t,u)$ is bounded for $t\ge1$. Splitting at a fixed $T>1$, using
uniform convergence to $L$ on $(T,\infty)$ and
$\eta_s((1,T))\to0$, yields
\[
 \int_1^\infty J_p(t,u)\,\dd\eta_s(t)\longrightarrow L.
\]
Finally,
\begin{align*}
 s\int_1^\infty t^{-\beta sp-1}J_p(t,u)\,\dd t
 &=\frac1{\beta p}
 \int_1^\infty J_p(t,u)\,\dd\eta_s(t)\\
 &\longrightarrow\frac{L}{\beta p}.
\end{align*}
Adding the short-time contribution proves \eqref{eq:Abel-limit}.
\end{proof}

\begin{proposition}
\label{prop:Besov-positive-order}
If
\begin{equation*}
 \int_0^1t^{-\beta s_0p-1}J_p(t,u)\,\dd t<\infty
\end{equation*}
for some $s_0>0$, then \eqref{eq:short-time-minimal-beta} holds.  Moreover,
\[
 \int_1^\infty t^{-\beta sp-1}J_p(t,u)\,\dd t
\]
is finite for every $s>0$.
\end{proposition}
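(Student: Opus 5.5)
The plan is to argue by monotonicity in $s$ on the short-time interval, exactly as in Proposition~\ref{prop:fractional-general}. For the long-time part we use the uniform bound \eqref{eq:J-Pi-bound}.

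For the first claim, fix $0<s<s_0$. For $t\in(0,1)$ we have $t^{-1}\ge1$. Since $\beta sp<\beta s_0p$, this gives
\[
 t^{-\beta sp-1}\le t^{-\beta s_0p-1}.
\]
As $J_p(t,u)\ge0$, we obtain
\[
 0\le s\int_0^1t^{-\beta sp-1}J_p(t,u)\,\dd t
 \le s\int_0^1t^{-\beta s_0p-1}J_p(t,u)\,\dd t.
\]
The last integral is a finite constant independent of $s$, so the right-hand side tends to $0$ as $s\downarrow0$. This is exactly \eqref{eq:short-time-minimal-beta}.

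For the second claim, recall that $\Pi_t$ is an $(a_t,b_t)$-marginal coupling. By \eqref{eq:J-Pi-bound}, for $t\ge1$,
\[
 0\le J_p(t,u)\le 2^{p-1}\bigl(\|a_t\|_\infty+\|b_t\|_\infty\bigr)\Mp(u)\le C\,\Mp(u),
\]
where $C:=2^{p-1}\sup_{t\ge1}(\|a_t\|_\infty+\|b_t\|_\infty)<\infty$. Here we use the uniform marginal bound from the hypotheses of Corollary~\ref{cor:transition-decoupling}, which is in force in this section, together with $u\in L^p(X;Y)$. For every $s>0$ the exponent $-\beta sp-1<-1$, so
\[
 \int_1^\infty t^{-\beta sp-1}J_p(t,u)\,\dd t\le C\,\Mp(u)\int_1^\infty t^{-\beta sp-1}\,\dd t=\frac{C\,\Mp(u)}{\beta sp}<\infty.
\]

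There is no real obstacle in either step. The only point needing care is to state which standing assumptions are used: the uniform $L^\infty$ bound on $a_t,b_t$ for $t\ge1$ and $\Mp(u)<\infty$. Measurability of $t\mapsto J_p(t,u)$ should also be noted; it follows from Tonelli's theorem, given that $(t,x,y)\mapsto q_t(x,y)$ is jointly measurable, which the definition \eqref{eq:Besov-beta} implicitly presupposes.
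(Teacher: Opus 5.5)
Your proof is correct and follows the same route as the paper. You bound $t^{-\beta sp-1}\le t^{-\beta s_0p-1}$ on $(0,1)$ for $0<s<s_0$ to get the short-time limit, and you use the uniform marginal bound via \eqref{eq:J-Pi-bound} together with $\int_1^\infty t^{-\beta sp-1}\,\dd t=1/(\beta sp)$ for the long-time part; your measurability remark is a harmless addition that the paper leaves implicit.
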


\begin{proof}
For $0<s<s_0$ and $0<t<1$,
$t^{-\beta sp-1}\le t^{-\beta s_0p-1}$.  Hence
\[
 0\le
 s\int_0^1t^{-\beta sp-1}J_p(t,u)\,\dd t
 \le
 s\int_0^1t^{-\beta s_0p-1}J_p(t,u)\,\dd t
 \longrightarrow0.
\]
For $t\ge1$, the uniform $L^\infty$ bound on the marginal densities gives
$J_p(t,u)\le C\Mp(u)$, and
\[
 \int_1^\infty t^{-\beta sp-1}\,\dd t=\frac1{\beta sp}<\infty.
\]
\end{proof}

\subsection{Applications of the Abel principle}

We apply Theorem~\ref{thm:Abel-semigroup} to the non-symmetric
hypoelliptic semigroups of Buseghin--Garofalo--Tralli
\cite{BuseghinGarofaloTralli2022}, Riemannian heat semigroups, weighted
graphs, and symmetric conservative ultracontractive semigroups, including
the Baouendi--Grushin heat flow.

\subsubsection{Buseghin--Garofalo--Tralli and the two marginal masses}

Theorem~\ref{thm:Abel-semigroup} recovers the result of Buseghin, Garofalo,
and Tralli \cite{BuseghinGarofaloTralli2022}, including the change of
constant caused by the drift.

Let $Q=Q^*\ge0$ and $B\in\mathbb R^{N\times N}$, where $B^*$ denotes
transpose, and let
\begin{equation*}
 \mathscr A f(X)
 =\operatorname{tr}(Q\nabla^2f(X))
 +\langle BX,\nabla f(X)\rangle.
\end{equation*}
This is the H\"ormander-type operator considered in
\cite{BuseghinGarofaloTralli2022}.  We assume the
hypoellipticity condition used in \cite{BuseghinGarofaloTralli2022}, namely
\[
 K(t):=\frac1t\int_0^t e^{sB}Qe^{sB^*}\,\dd s>0
 \qquad\text{for every }t>0,
\]
together with the assumption
\begin{equation}\label{eq:trace-nonnegative}
 \operatorname{tr}B\ge0.
\end{equation}
Let $p(X,Z,t)$ be the heat kernel of the corresponding semigroup
$P_t^{\mathscr A}$.  Two identities recorded in
\cite[(2.1)--(2.2)]{BuseghinGarofaloTralli2022} are
\begin{equation}\label{eq:BGT-two-marginals}
 \int_{\mathbb R^N}p(X,Z,t)\,\dd Z=1,
 \qquad
 \int_{\mathbb R^N}p(X,Z,t)\,\dd X
 =e^{-t\operatorname{tr}B}.
\end{equation}
Thus
\begin{equation*}
 a_t=1,
 \qquad
 b_t=e^{-t\operatorname{tr}B}.
\end{equation*}

The kernel has the Gaussian form
\begin{equation*}
 p(X,Z,t)
 =\frac{c_N}{V(t)}
 \exp\left(-\frac{m_t(X,Z)^2}{4t}\right),
\end{equation*}
where $V(t)$ is the volume function introduced in that paper.  In
particular,
\begin{equation*}
 0\le p(X,Z,t)\le\frac{c_N}{V(t)}.
\end{equation*}
The large-time volume estimate proved in
\cite[Proposition~2.3]{BuseghinGarofaloTralli2022} implies
\begin{equation*}
 V(t)\longrightarrow\infty.
\end{equation*}
Therefore, if $K\subset\mathbb R^N$ is bounded,
\begin{equation*}
 \int_K\int_Kp(X,Z,t)\,\dd Z\,\dd X
 \le\frac{c_N|K|^2}{V(t)}\longrightarrow0.
\end{equation*}
Thus the transition couplings escape from bounded rectangles.

For a pointed metric target $(Y,d_Y,y_0)$ and
$u\in L^p(\mathbb R^N;Y)$ define
\begin{equation*}
 J_p^{\mathscr A}(t,u)
 :=\int_{\mathbb R^N}\int_{\mathbb R^N}
 d_Y(u(X),u(Z))^p p(X,Z,t)\,\dd Z\,\dd X.
\end{equation*}
By Corollary~\ref{cor:transition-decoupling},
\begin{equation*}
 J_p^{\mathscr A}(t,u)
 \longrightarrow
 \begin{cases}
 2\Mp(u),&\operatorname{tr}B=0,\\[1mm]
 \Mp(u),&\operatorname{tr}B>0.
 \end{cases}
\end{equation*}
Indeed, if $\operatorname{tr}B=0$ then
$a_\infty=b_\infty=1$, while if $\operatorname{tr}B>0$ then
$a_\infty=1$ and $b_\infty=0$.

Define the metric-valued semigroup Besov functional
\begin{equation*}
 \mathscr N_{s,p}^{\mathscr A}(u)^p
 :=\int_0^\infty
 t^{-sp/2-1}J_p^{\mathscr A}(t,u)\,\dd t.
\end{equation*}
This is \eqref{eq:Besov-beta} with $\beta=1/2$.

\begin{corollary}
\label{cor:BGT-metric}
Let $1\le p<\infty$, assume the preceding hypoellipticity condition, and assume \eqref{eq:trace-nonnegative}.  Let
$u\in L^p(\mathbb R^N;Y)$ and suppose that
\begin{equation}\label{eq:BGT-one-order}
 \mathscr N_{s_0,p}^{\mathscr A}(u)<\infty
\end{equation}
for some $s_0\in(0,1)$.  Then
\begin{equation}\label{eq:BGT-metric-limit}
 \lim_{s\downarrow0}s\mathscr N_{s,p}^{\mathscr A}(u)^p
 =
 \begin{cases}
 \dfrac4p\Mp(u),&\operatorname{tr}B=0,\\[2mm]
 \dfrac2p\Mp(u),&\operatorname{tr}B>0.
 \end{cases}
\end{equation}
For $Y=\mathbb R$, $y_0=0$, this recovers
\cite[Theorem~1.1]{BuseghinGarofaloTralli2022}.
\end{corollary}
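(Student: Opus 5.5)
This corollary is a direct application of Theorem~\ref{thm:Abel-semigroup} with $\beta=1/2$, $q_t(X,Z)=p(X,Z,t)$ and $\mu$ the Lebesgue measure on $\mathbb R^N$. The plan is to check the hypotheses of Corollary~\ref{cor:transition-decoupling}, obtain the short-time condition \eqref{eq:short-time-minimal-beta} from Proposition~\ref{prop:Besov-positive-order}, and read off the constants.

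\emph{Marginals.} By \eqref{eq:BGT-two-marginals}, $a_t\equiv1$ and $b_t\equiv e^{-t\operatorname{tr}B}$. Since $\operatorname{tr}B\ge0$, both are bounded by $1$ for all $t\ge1$, so $\sup_{t\ge1}(\|a_t\|_\infty+\|b_t\|_\infty)\le2$. These are constant functions, so weak-* convergence reduces to convergence of constants, tested against $f\in L^1$.
\begin{itemize}
\item $a_t\to a_\infty=1$ trivially.
\item $b_t\to b_\infty$, where $b_\infty=1$ if $\operatorname{tr}B=0$ and $b_\infty=0$ if $\operatorname{tr}B>0$.
\end{itemize}

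\emph{Escape from bounded rectangles.} This follows from the Gaussian bound $p\le c_N/V(t)$ together with $V(t)\to\infty$ from \cite[Proposition~2.3]{BuseghinGarofaloTralli2022}, as already displayed before the statement. This is the only genuinely external input; I take it as given from the cited paper.

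\emph{Short-time condition.} The finiteness \eqref{eq:BGT-one-order} gives in particular
\[
\int_0^1 t^{-s_0p/2-1}J_p^{\mathscr A}(t,u)\,\dd t<\infty .
\]
This is exactly the hypothesis of Proposition~\ref{prop:Besov-positive-order} with $\beta=1/2$. That proposition yields \eqref{eq:short-time-minimal-beta}, and also the finiteness of the tail integral over $(1,\infty)$ for every $s>0$. Hence $\mathscr N^{\mathscr A}_{s,p}(u)$ is finite for all small $s$.

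\emph{Constants.} Theorem~\ref{thm:Abel-semigroup} gives
\[
\lim_{s\downarrow0}s\,\mathscr N^{\mathscr A}_{s,p}(u)^p=\frac{2}{p}\int(a_\infty+b_\infty)\,d_Y(u,y_0)^p\,\dd X,
\]
since $1/(\beta p)=2/p$.
\begin{itemize}
\item If $\operatorname{tr}B=0$, then $a_\infty+b_\infty=2$ and the limit is $\frac4p\Mp(u)$.
\item If $\operatorname{tr}B>0$, then $a_\infty+b_\infty=1$ and the limit is $\frac2p\Mp(u)$.
\end{itemize}

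For $Y=\mathbb R$ with $y_0=0$, we have $\Mp(u)=\|u\|_p^p$, so the statement matches \cite[Theorem~1.1]{BuseghinGarofaloTralli2022}. The only point to watch there is that their Besov normalization coincides with \eqref{eq:Besov-beta} for $\beta=1/2$.
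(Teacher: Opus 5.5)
Your proposal is correct and follows the paper's proof. You get the short-time condition from Proposition~\ref{prop:Besov-positive-order}, then apply Theorem~\ref{thm:Abel-semigroup} with $\beta=1/2$, using the marginals $a_t\equiv1$, $b_t\equiv e^{-t\operatorname{tr}B}$ and the escape bound $p\le c_N/V(t)$, and read off the constant $\tfrac{2}{p}(a_\infty+b_\infty)$. The only difference is cosmetic: the paper writes out the bound $J_p^{\mathscr A}(t,u)\le 2^p\Mp(u)$ for the far-time integral, while you take the same finiteness from the second part of Proposition~\ref{prop:Besov-positive-order}.
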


\begin{proof}
By \eqref{eq:BGT-one-order}, the integral over $(0,1)$ at order $s_0$ is
finite.  Hence Proposition~\ref{prop:Besov-positive-order} gives the short-time
condition \eqref{eq:short-time-minimal-beta}.  The far-time integral is
finite for every $s>0$ because, by \eqref{eq:BGT-two-marginals},
\[
 J_p^{\mathscr A}(t,u)
 \le2^{p-1}\bigl(1+e^{-t\operatorname{tr}B}\bigr)\Mp(u)
 \le2^p\Mp(u).
\]
Apply Theorem~\ref{thm:Abel-semigroup} with $\beta=1/2$.  If
$\operatorname{tr}B=0$, then $a_\infty+b_\infty=2$, and
\[
 \frac{a_\infty+b_\infty}{\beta p}=
 \frac{2}{p/2}=\frac4p.
\]
If $\operatorname{tr}B>0$, then $a_\infty+b_\infty=1$, and the constant is
$2/p$.  This proves \eqref{eq:BGT-metric-limit}.
\end{proof}

\begin{remark}
For $\beta=1/2$, the Abelian factor is $2/p$. If
$\operatorname{tr}B=0$, both limiting marginal densities equal one; if
$\operatorname{tr}B>0$, the second tends to zero. This gives $4/p$ and
$2/p$, respectively.
\end{remark}
\subsection{Riemannian heat semigroup}
\begin{corollary}
\label{cor:riemannian-infinite-volume}
Let $(M,g)$ be a complete, connected Riemannian manifold with
\[
\mu(M)=+\infty,
\]
and let $h_t(x,y)$ be its minimal heat kernel. Set
\[
m_t(x):=\int_M h_t(x,y)\,\dd\mu(y)
\]
and
\[
\theta_M(x):=\lim_{t\to\infty}m_t(x).
\]
The latter limit exists for every $x\in M$ and defines a bounded
harmonic function with values in $[0,1]$.

Let $(Y,d_Y,y_0)$ be a pointed metric space, let $1\le p<\infty$,
and let $u\in L^p(M;Y)$. Define
\[
J_p(t,u)
:=
\int_M\int_M
d_Y(u(x),u(y))^p h_t(x,y)
\,\dd\mu(y)\,\dd\mu(x)
\]
and
\[
\mathscr N_{s,p}^{h,1/2}(u)^p
:=
\int_0^\infty
t^{-sp/2-1}J_p(t,u)\,\dd t.
\]
Assume that
\[
\mathscr N_{s_0,p}^{h,1/2}(u)<\infty
\]
for some $s_0>0$. Then
\begin{equation}\label{eq:riemannian-weighted-MS}
\lim_{s\downarrow0}
s\,\mathscr N_{s,p}^{h,1/2}(u)^p
=
\frac4p
\int_M
\theta_M(x)d_Y(u(x),y_0)^p\,\dd\mu(x).
\end{equation}
\end{corollary}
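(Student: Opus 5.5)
The plan is to apply Theorem~\ref{thm:Abel-semigroup} with $\beta=1/2$ and $q_t=h_t$, after checking the hypotheses of Corollary~\ref{cor:transition-decoupling}. By symmetry of the minimal heat kernel, $h_t(x,y)=h_t(y,x)$, so $a_t=b_t=m_t$. Standard properties of the minimal heat kernel give $0\le m_t\le 1$, so $\sup_{t\ge1}(\|a_t\|_\infty+\|b_t\|_\infty)\le 2$.

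For the existence of $\theta_M$, I would use the semigroup property:
\[
m_{t+s}(x)=\int_M h_t(x,z)m_s(z)\,\dd\mu(z)\le m_t(x).
\]
Hence $t\mapsto m_t(x)$ is nonincreasing, and $\theta_M(x)=\lim_t m_t(x)\in[0,1]$ exists pointwise. Passing to the limit $s\to\infty$ in the same identity by monotone convergence gives $P_t\theta_M=\theta_M$. By standard parabolic theory this makes $\theta_M$ a bounded harmonic function; this part is classical (Grigor'yan) and I would quote it. Pointwise monotone convergence together with the uniform bound $m_t\le1$ then gives, via dominated convergence against any $f\in L^1(M,\mu)$,
\[
a_t=b_t\stackrel{*}{\rightharpoonup}\theta_M .
\]

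The main step is escape from bounded rectangles: $\int_K\int_K h_t\,\dd\mu\,\dd\mu\to0$ for bounded $K$. I would use the spectral theorem for the Friedrichs Laplacian. With $f=\one_K\in L^2$,
\[
\int_K\int_K h_t=\langle P_t f,f\rangle=\int_{[0,\infty)}e^{-t\lambda}\,\dd E_{f,f}(\lambda),
\]
and by dominated convergence this tends to $E_{f,f}(\{0\})=\|\Pi_0 f\|^2$, where $\Pi_0$ is the projection onto the $L^2$-kernel of the Laplacian. An $L^2$ element of that kernel is an $L^2$ harmonic function. On a complete manifold, Yau's theorem makes it constant, and since $\mu(M)=\infty$ the constant is $0$. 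So $\Pi_0=0$ and the rectangles escape. This is the delicate step. Completeness and infinite volume enter exactly here, and one must make sure the kernel of the self-adjoint generator coincides with the $L^2$ harmonic functions; this holds by elliptic regularity, since $\Delta g=0$ weakly.

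Finally, Proposition~\ref{prop:Besov-positive-order} turns the hypothesis $\mathscr N_{s_0,p}^{h,1/2}(u)<\infty$ into the short-time condition \eqref{eq:short-time-minimal-beta} and the finiteness of the far-time integral. Theorem~\ref{thm:Abel-semigroup} then gives the limit
\[
\frac{1}{\beta p}\int_M 2\theta_M\,d_Y(u,y_0)^p\,\dd\mu=\frac4p\int_M\theta_M\,d_Y(u,y_0)^p\,\dd\mu ,
\]
which is \eqref{eq:riemannian-weighted-MS}.
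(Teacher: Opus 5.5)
Your proposal is correct and follows the paper's proof essentially step by step. You use the symmetric marginals $m_t\mu$ with $0\le m_t\le 1$, weak-* convergence to $\theta_M$ by dominated convergence, and escape from bounded rectangles via $\Pi_t(K\times K)=\langle \one_K,P_t\one_K\rangle$ and the spectral theorem, then conclude with Proposition~\ref{prop:Besov-positive-order} and Theorem~\ref{thm:Abel-semigroup} at $\beta=1/2$. The only differences are minor: you derive monotonicity of $t\mapsto m_t(x)$ from the semigroup property instead of citing Caselli--Gennaioli, and you show $\ker\Delta=\{0\}$ with Yau's $L^2$ Liouville theorem, whereas the paper notes that kernel elements of the Friedrichs generator have zero Dirichlet energy; that route uses completeness only to get $\mu(K)<\infty$ via Hopf--Rinow, a point your write-up leaves implicit.
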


\begin{proof}
For $t>0$ define
\[
\dd\Pi_t(x,y)
:=
h_t(x,y)\,\dd\mu(x)\,\dd\mu(y).
\]
The symmetry and sub-Markov property of the heat kernel imply that both
marginals of $\Pi_t$ are $m_t\mu$ and $0\le m_t\le1$. Moreover,
$m_t(x)\to\theta_M(x)$ for every $x\in M$; see
\cite[Theorem~1.1(ii)]{CaselliGennaioli}. Dominated convergence therefore
gives
\[
 m_t\stackrel{*}{\rightharpoonup}\theta_M
 \quad\text{in }L^\infty(M,\mu).
\]

Let $P_t=e^{-t\Delta}$ be the minimal heat semigroup on $L^2(M,\mu)$.
For every bounded Borel set $K\subset M$, completeness and Hopf--Rinow
imply $\mu(K)<\infty$, and
\[
 \Pi_t(K\times K)=\langle\one_K,P_t\one_K\rangle_{L^2(M,\mu)}.
\]
By the spectral theorem, $P_t$ converges strongly as $t\to\infty$ to the
orthogonal projection onto $\ker\Delta$. Every function in this kernel has
zero Dirichlet energy and is therefore constant on the connected manifold.
Since $\mu(M)=\infty$, the only such $L^2$ function is zero. Hence
\[
 \Pi_t(K\times K)\longrightarrow0.
\]
Corollary~\ref{cor:transition-decoupling} now gives
\begin{equation}\label{eq:riemannian-large-time-weighted}
 J_p(t,u)\longrightarrow
 2\int_M\theta_M(x)d_Y(u(x),y_0)^p\,\dd\mu(x).
\end{equation}

The assumption at order $s_0$ and
Proposition~\ref{prop:Besov-positive-order} give the short-time condition
\eqref{eq:short-time-minimal-beta}. Applying
Theorem~\ref{thm:Abel-semigroup} with $\beta=1/2$ and limiting marginal
densities $a_\infty=b_\infty=\theta_M$ proves
\eqref{eq:riemannian-weighted-MS}.
\end{proof}
\begin{remark}
The result in \cite{CaselliGennaioli} is formulated for scalar-valued
functions and $p=2$. Corollary~\ref{cor:riemannian-infinite-volume} holds
for $1\le p<\infty$ and metric-valued maps.
\end{remark}

\subsubsection{Weighted graphs}

For weighted graphs, large-time escape follows from the spectral theorem
without imposing
volume doubling, a Poincar\'e inequality, or pointwise heat-kernel bounds.  We
refer to \cite{KellerLenzWojciechowski2021} for the analytic and probabilistic
background on graph Laplacians and heat semigroups.

Let $G=(V,b,m)$ be a connected, locally finite weighted graph, endowed with the
combinatorial distance, and let $L$ be the nonnegative self-adjoint
Friedrichs realization associated with the closed canonical Dirichlet form. Write
$P_t=e^{-tL}$ and denote by $p_t(x,y)$ its heat kernel with respect to $m$.
Thus
\[
 P_tf(x)=\sum_{y\in V}p_t(x,y)f(y)m(y).
\]
We say that $G$ is stochastically complete if
\[
 \sum_{y\in V}p_t(x,y)m(y)=1
 \qquad (x\in V,\ t>0).
\]

\begin{corollary}
\label{cor:graph-MS}
Assume that $G$ is connected, locally finite and stochastically complete, and
that $m(V)=+\infty$.  Let $(Y,d_Y,y_0)$ be a pointed metric space,
$1\le p<\infty$, and $\beta>0$.  For $u\in \ell^p(V,m;Y)$ set
\[
 J_p^G(t,u)
 :=\sum_{x,y\in V}d_Y(u(x),u(y))^p
 p_t(x,y)m(x)m(y)
\]
and
\[
 \mathscr N_{s,p}^{G,\beta}(u)^p
 :=\int_0^\infty t^{-\beta sp-1}J_p^G(t,u)\,\dd t.
\]
If $\mathscr N_{s_0,p}^{G,\beta}(u)<\infty$ for some $s_0>0$, then
\begin{equation}\label{eq:graph-MS}
 \lim_{s\downarrow0}s\,\mathscr N_{s,p}^{G,\beta}(u)^p
 =\frac{2}{\beta p}
 \sum_{x\in V}d_Y(u(x),y_0)^p m(x).
\end{equation}
\end{corollary}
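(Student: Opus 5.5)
The plan mirrors the proof of Corollary~\ref{cor:riemannian-infinite-volume}, with $X=V$, $\mu=m$ (counting measure weighted by $m$) and $d_X$ the combinatorial distance. Since $G$ is locally finite and connected, bounded sets are finite, so $(V,d_X)$ is proper. Every finite set has finite $m$-measure, so $m$ is Radon and finite on bounded sets. We set $\Pi_t:=p_t(x,y)\,m(x)m(y)$ on $V\times V$.

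\textbf{Marginals.} The heat kernel is symmetric, $p_t(x,y)=p_t(y,x)$, because $L$ is self-adjoint on $\ell^2(V,m)$. Stochastic completeness then gives both marginal densities $a_t=b_t\equiv1$. Hence \eqref{eq:marginal-assumptions} holds trivially with $a_\infty=b_\infty=1$, and \eqref{eq:J-Pi-bound} bounds $J^G_p(t,u)\le 2^p\Mp(u)$ for all $t$.

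\textbf{Escape from bounded rectangles.} This is the main step. For a bounded, hence finite, $K\subset V$ we have $\one_K\in\ell^2(V,m)$ and
\[
 \Pi_t(K\times K)=\langle \one_K,P_t\one_K\rangle_{\ell^2(V,m)}.
\]
By the spectral theorem, $P_t=e^{-tL}$ converges strongly as $t\to\infty$ to the orthogonal projection onto $\ker L$. It remains to check that $\ker L=\{0\}$. If $Lf=0$ with $f\in D(L)\subset D(Q)$, then the Dirichlet energy vanishes:
\[
 Q(f)=\tfrac12\sum_{x,y}b(x,y)\,|f(x)-f(y)|^2=0.
\]
Connectedness then forces $f$ to be constant. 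A nonzero constant is not in $\ell^2(V,m)$ because $m(V)=\infty$, so $f=0$. Therefore $\Pi_t(K\times K)\to0$. The only point to verify is that the Friedrichs/closed-form realization really has form $Q$ on its domain, which is standard (see \cite{KellerLenzWojciechowski2021}).

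\textbf{Conclusion.} Corollary~\ref{cor:transition-decoupling} now gives $J^G_p(t,u)\to 2\sum_x d_Y(u(x),y_0)^p m(x)$. The hypothesis $\mathscr N^{G,\beta}_{s_0,p}(u)<\infty$ together with Proposition~\ref{prop:Besov-positive-order} yields the short-time condition \eqref{eq:short-time-minimal-beta}, and the far-time integral is finite for every $s>0$. Theorem~\ref{thm:Abel-semigroup} with $a_\infty+b_\infty=2$ then produces the constant $2/(\beta p)$, which is \eqref{eq:graph-MS}.
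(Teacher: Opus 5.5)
Your proposal is correct and follows the paper's proof essentially step by step. Both marginals equal $m$ by symmetry and stochastic completeness, and escape from bounded rectangles follows from $\Pi_t(K\times K)=\langle \one_K,P_t\one_K\rangle_{\ell^2(V,m)}$ together with strong convergence of $P_t$ to the projection onto $\ker L=\{0\}$; then Corollary~\ref{cor:transition-decoupling}, Proposition~\ref{prop:Besov-positive-order} and Theorem~\ref{thm:Abel-semigroup} give the constant $2/(\beta p)$, and your extra checks (properness of $(V,d_X)$, symmetry of $p_t$ from self-adjointness, the form identity on $D(L)$) merely make explicit points the paper leaves implicit.
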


\begin{proof}
Symmetry of the graph Dirichlet form and stochastic completeness imply that
both marginals of
\[
 \Pi_t(x,y):=p_t(x,y)m(x)m(y)
\]
are equal to $m$.  It remains to verify escape from bounded rectangles.  Since
$G$ is locally finite, bounded sets for the combinatorial distance are finite.
For such a set $K\subset V$,
\[
 \Pi_t(K\times K)
 =\langle \one_K,P_t\one_K\rangle_{\ell^2(V,m)}.
\]
By the spectral theorem, $P_t$ converges strongly in $\ell^2(V,m)$, as
$t\to\infty$, to the orthogonal projection onto $\ker L$.  If $f\in\ker L$,
then its Dirichlet energy vanishes, hence $f$ is constant on $V$ by
connectedness.  Since $m(V)=+\infty$, no nonzero constant belongs to
$\ell^2(V,m)$, and therefore $\ker L=\{0\}$.  It follows that
$\Pi_t(K\times K)\to0$.  Corollary~\ref{cor:transition-decoupling} gives
\[
 J_p^G(t,u)\longrightarrow
 2\sum_{x\in V}d_Y(u(x),y_0)^p m(x),
\]
and Theorem~\ref{thm:Abel-semigroup}, together with
Proposition~\ref{prop:Besov-positive-order}, yields \eqref{eq:graph-MS}.
\end{proof}

\begin{remark}
For a finite connected weighted graph, the heat semigroup converges instead to
the projection onto constants.  If $M=m(V)$, the same Abelian argument gives
\[
 \lim_{s\downarrow0}s\,\mathscr N_{s,p}^{G,\beta}(u)^p
 =\frac{1}{\beta p M}
 \sum_{x,y\in V}d_Y(u(x),u(y))^p m(x)m(y).
\]
Thus the infinite-graph limit measures the $\ell^p$ mass of $u$, whereas the
finite-graph limit measures its stationary pairwise dispersion.  Moreover, on
graphs with anomalous diffusion and walk dimension $d_w$, the spatially
natural choice is $\beta=1/d_w$; formula \eqref{eq:graph-MS} then becomes
\[
 \lim_{s\downarrow0}s\,\mathscr N_{s,p}^{G,1/d_w}(u)^p
 =\frac{2d_w}{p}\sum_{x\in V}d_Y(u(x),y_0)^p m(x).
\]
This specialization applies whenever a walk dimension $d_w$ is specified by
two-sided sub-Gaussian heat-kernel estimates; see, e.g.,
\cite{GrigoryanTelcs2001}.
\end{remark}

\subsubsection{Symmetric conservative ultracontractive semigroups and the Baouendi--Grushin example}

The following result applies to the Baouendi--Grushin heat semigroup.

\begin{corollary}
\label{cor:symmetric-ultracontractive-MS}
Assume that the kernels $q_t$, $t>0$, are symmetric,
\[
 q_t(x,y)=q_t(y,x),
\]
and conservative,
\[
 \int_Xq_t(x,y)\,\dd\mu(y)=1
 \qquad\text{for a.e. }x\in X.
\]
Let
\[
 \mathsf P_tf(x)
 :=
 \int_Xq_t(x,y)f(y)\,\dd\mu(y),
\]
and suppose that there exist $C,\gamma>0$ such that
\begin{equation*}
 \|\mathsf P_tf\|_{L^\infty(X)}
 \le Ct^{-\gamma}\|f\|_{L^1(X)}
 \qquad(t\ge1).
\end{equation*}
Then the two marginals of
\[
 \dd\Pi_t(x,y)
 =
 q_t(x,y)\,\dd\mu(x)\,\dd\mu(y)
\]
are both equal to $\mu$, and $\Pi_t$ escapes from bounded rectangles.

Consequently, for every pointed metric space $(Y,d_Y,y_0)$,
$1\le p<\infty$, and $u\in L^p(X;Y)$ such that
\[
 \mathscr N_{s_0,p}^{q,1/2}(u)<\infty
\]
for some $s_0>0$, one has
\begin{equation*}
 \lim_{s\downarrow0}
 s\,\mathscr N_{s,p}^{q,1/2}(u)^p
 =
 \frac4p\,\Mp(u).
\end{equation*}
\end{corollary}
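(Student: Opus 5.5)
The plan is to check the hypotheses of Corollary~\ref{cor:transition-decoupling} with $a_t=b_t=1$, and then apply Theorem~\ref{thm:Abel-semigroup} with $\beta=1/2$, using Proposition~\ref{prop:Besov-positive-order} for the short-time condition.

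\emph{Marginals.} Conservativeness gives $a_t(x)=\int_X q_t(x,y)\,\dd\mu(y)=1$ for a.e.\ $x$. By symmetry and Tonelli,
\[
b_t(y)=\int_X q_t(x,y)\,\dd\mu(x)=\int_X q_t(y,x)\,\dd\mu(x)=1
\]
for a.e.\ $y$. Hence both marginals of $\Pi_t$ equal $\mu$. The uniform bound and the weak-* convergence to $a_\infty=b_\infty=1$ are then trivial.

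\emph{Escape from bounded rectangles.} Let $K$ be bounded and Borel, so $\mu(K)<\infty$. Then
\[
\Pi_t(K\times K)=\int_K \mathsf P_t\one_K\,\dd\mu\le \mu(K)\,\|\mathsf P_t\one_K\|_{L^\infty}\le C\mu(K)^2 t^{-\gamma}\longrightarrow0
\qquad (t\to\infty).
\]
Here the ultracontractive bound is applied with $f=\one_K\in L^1$. This is the only point where the ultracontractive hypothesis enters, and it replaces the spectral argument used in the Riemannian and graph cases.

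\emph{Conclusion.} Corollary~\ref{cor:transition-decoupling} gives $J_p(t,u)\to 2\Mp(u)$. The finiteness of $\mathscr N^{q,1/2}_{s_0,p}(u)$ implies in particular that $\int_0^1 t^{-s_0p/2-1}J_p(t,u)\,\dd t<\infty$. Proposition~\ref{prop:Besov-positive-order} then yields the short-time condition \eqref{eq:short-time-minimal-beta}, together with finiteness of the tail integral for every $s>0$. Theorem~\ref{thm:Abel-semigroup} with $\beta=1/2$ then gives the limit
\[
\frac{1}{(1/2)p}\cdot 2\Mp(u)=\frac4p\Mp(u).
\]

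There is no real obstacle. The only care needed is the a.e.\ meaning of conservativeness in the marginal identity for $b_t$, which is handled by the Tonelli argument combined with symmetry.
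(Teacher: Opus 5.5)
Your proposal is correct and follows the paper's proof essentially step by step. You obtain the two unit marginals from conservativeness and symmetry, prove escape via $\Pi_t(K\times K)=\int_K\mathsf P_t\one_K\,\dd\mu\le Ct^{-\gamma}\mu(K)^2$, and conclude with Corollary~\ref{cor:transition-decoupling}, Proposition~\ref{prop:Besov-positive-order} and Theorem~\ref{thm:Abel-semigroup} at $\beta=1/2$.
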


\begin{proof}
Conservativeness gives the first marginal of $\Pi_t$, while symmetry gives
the second one.  Thus $a_t=b_t=1$.  If $K\subset X$ is bounded, then
\[
 \Pi_t(K\times K)
 =
 \int_K\mathsf P_t\one_K(x)\,\dd\mu(x)
 \le
 Ct^{-\gamma}\mu(K)^2
 \longrightarrow0.
\]
Hence Corollary~\ref{cor:transition-decoupling} applies and gives
\[
 J_p(t,u)\longrightarrow2\Mp(u)
 \qquad(t\to\infty).
\]
The short-time assumption in Theorem~\ref{thm:Abel-semigroup} follows
from Proposition~\ref{prop:Besov-positive-order}.  Taking
$\beta=1/2$ in \eqref{eq:Abel-limit} yields
\[
 \frac{a_\infty+b_\infty}{\beta p}
 =
 \frac{2}{p/2}
 =
 \frac4p.
\]
\end{proof}

We apply Corollary~\ref{cor:symmetric-ultracontractive-MS} to the heat kernel
of the generally non-hypoelliptic Baouendi--Grushin operator. This recovers
the asymptotic formula in \cite{LiWang2024}; see
\cite[Section~2]{LiWang2024} for background.

\begin{corollary}
\label{cor:Grushin-MS}
Let $\alpha\ge0$, let $k,m\ge1$, $n=k+m$, and consider
\[
 G_\alpha^n=\mathbb R^k\times\mathbb R^m
\]
with the Baouendi--Grushin operator
\[
 \Delta_\alpha
 =
 \Delta_x+|x|^{2\alpha}\Delta_y.
\]
Let $(H_t)_{t\ge0}$ be its heat semigroup and let
$h_t(g,g')$ be the corresponding heat kernel with respect to Lebesgue
measure.  Let $(Y,d_Y,y_0)$ be a pointed metric space and
$1\le p<\infty$.  For
$u\in L^p(G_\alpha^n;Y)$ define
\begin{equation*}
 J_p^\alpha(t,u)
 :=
 \int_{G_\alpha^n}\int_{G_\alpha^n}
 d_Y(u(g),u(g'))^p
 h_t(g,g')
 \,\dd g'\,\dd g
\end{equation*}
and
\begin{equation*}
 \mathscr N_{s,p}^\alpha(u)^p
 :=
 \int_0^\infty
 t^{-sp/2-1}J_p^\alpha(t,u)\,\dd t.
\end{equation*}
If
\[
 \mathscr N_{s_0,p}^\alpha(u)<\infty
\]
for some $s_0>0$, then
\begin{equation*}
 \lim_{s\downarrow0}
 s\,\mathscr N_{s,p}^\alpha(u)^p
 =
 \frac4p
 \int_{G_\alpha^n}d_Y(u(g),y_0)^p\,\dd g.
\end{equation*}
For $Y=\mathbb R$, this is the dimension-free
Maz'ya--Shaposhnikova formula of Li and Wang
\cite[Theorem~2.3]{LiWang2024}.
\end{corollary}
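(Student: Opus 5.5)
The plan is to deduce the statement from Corollary~\ref{cor:symmetric-ultracontractive-MS}, with $X=G_\alpha^n$, $\mu$ the Lebesgue measure and $q_t=h_t$. Once its three structural hypotheses are checked for the Baouendi--Grushin heat kernel, that corollary gives $J_p^\alpha(t,u)\to 2\Mp(u)$. Here $\mathscr N^\alpha_{s,p}=\mathscr N^{q,1/2}_{s,p}$ by definition, so the hypothesis $\mathscr N_{s_0,p}^\alpha(u)<\infty$ is exactly the one required there, and the constant $4/p$ follows. The scalar case $Y=\mathbb R$, $y_0=0$ then reproduces \cite[Theorem~2.3]{LiWang2024} directly.

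The hypotheses are verified in order.

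\emph{Symmetry.} $\Delta_\alpha$ is formally self-adjoint with respect to Lebesgue measure: it is in divergence form $\operatorname{div}_x\nabla_x+\operatorname{div}_y(|x|^{2\alpha}\nabla_y)$, because the coefficient $|x|^{2\alpha}$ does not depend on $y$. The heat semigroup of its Friedrichs extension is therefore symmetric, which gives $h_t(g,g')=h_t(g',g)$.

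\emph{Conservativeness.} I would cite \cite[Section~2]{LiWang2024}, where stochastic completeness $\int h_t(g,g')\,\dd g'=1$ is recorded. Alternatively it can be checked directly. In the Fourier variable dual to $y$, the kernel splits into Hermite-type kernels for $-\Delta_x+|\eta|^2|x|^{2\alpha}$. Integrating in $y$ sets $\eta=0$ and leaves the Euclidean Gaussian in $x$, which has mass one.

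\emph{Ultracontractivity.} The required bound is $\sup_{g,g'}h_t(g,g')\le Ct^{-\gamma}$ for $t\ge1$. Here I would use the dilation structure. With $\delta_\lambda(x,y)=(\lambda x,\lambda^{1+\alpha}y)$ and homogeneous dimension $Q=k+(1+\alpha)m$, one has $h_t(g,g')=t^{-Q/2}h_1(\delta_{t^{-1/2}}g,\delta_{t^{-1/2}}g')$. A uniform bound on $h_1$ then gives the estimate with $\gamma=Q/2$. Granting this, the corollary applies.

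I expect the main obstacle to be the global boundedness of $h_1$. The operator degenerates on $\{x=0\}$, and for non-integer $\alpha$ it is not even hypoelliptic, so sub-Riemannian Gaussian bounds are not directly available. I would take the on-diagonal bound $h_t(g,g)\lesssim|B(g,\sqrt t)|^{-1}$ from \cite{LiWang2024}, together with the volume estimate $|B(g,r)|\gtrsim r^{Q}$, valid since the ball volume is comparable to $r^{k+m}(r+|x|)^{\alpha m}\ge r^Q$. I would then pass off the diagonal via $h_t(g,g')\le\sqrt{h_t(g,g)h_t(g',g')}$, using the semigroup property and Cauchy--Schwarz. If that reference is unavailable, I would fall back on the Nash inequality $\|f\|_2^{2+4/Q}\lesssim\mathcal E(f)\|f\|_1^{4/Q}$ for the Grushin Dirichlet form, which also yields $\|\mathsf P_t\|_{1\to\infty}\lesssim t^{-Q/2}$ for all $t>0$.
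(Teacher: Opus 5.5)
Your proposal is correct and follows the paper's proof: both reduce the statement to Corollary~\ref{cor:symmetric-ultracontractive-MS} with $q_t=h_t$, and both cite \cite[Section~2]{LiWang2024} for symmetry and conservativeness. The only difference is that the paper takes the ultracontractive bound $\|H_tf\|_{L^\infty}\le Ct^{-Q/2}\|f\|_{L^1}$, with $Q=n+\alpha m$, directly from \cite[Lemma~2.1]{LiWang2024}, so your scaling, on-diagonal and Nash arguments re-derive that cited estimate rather than supply a step the proof needs.
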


\begin{proof}
The Grushin heat semigroup is conservative and its heat kernel is symmetric;
see \cite[Section~2]{LiWang2024}.  If
\[
 Q=n+\alpha m
\]
denotes the homogeneous dimension, the ultracontractive estimate
\cite[Lemma~2.1]{LiWang2024} gives
\begin{equation*}
 \|H_tf\|_{L^\infty(G_\alpha^n)}
 \le Ct^{-Q/2}\|f\|_{L^1(G_\alpha^n)}.
\end{equation*}
Consequently, for every bounded $K\subset G_\alpha^n$,
\[
 \int_K\int_Kh_t(g,g')\,\dd g'\,\dd g
 =
 \int_KH_t\one_K(g)\,\dd g
 \le
 Ct^{-Q/2}|K|^2
 \longrightarrow0.
\]
Thus the heat-kernel couplings have both marginals equal to Lebesgue
measure and escape from bounded rectangles.  The conclusion follows from
Corollary~\ref{cor:symmetric-ultracontractive-MS}.
\end{proof}
\section{Application to Carnot groups}
\label{sec:Carnot}

We apply the abstract decoupling theorem to metric-valued maps on Carnot
groups.

Let $G$ be a Carnot group with dilations $(\delta_r)_{r>0}$, homogeneous
dimension $Q$, Haar measure $\dd x$, and a continuous proper homogeneous norm
$N_G$. For basic facts on fractional Sobolev spaces on Carnot groups we refer,
for instance, to \cite{CapolliMaioneSalortVecchi2021}. Set
\[
 B_G(e,r):=\{h\in G:N_G(h)<r\},
 \qquad
 S_G:=\{h\in G:N_G(h)=1\}.
\]
We use the polar decomposition
\[
 \int_G f(h)\,\dd h
 =
 \int_0^\infty\int_{S_G}
 f(\delta_r\omega)\,
 \dd\sigma_G(\omega)\,r^{Q-1}\,\dd r,
\]
where $\sigma_G$ is the corresponding polar measure on $S_G$. In particular,
\[
 \sigma_G(S_G)=Q|B_G(e,1)|.
\]

Let $(Y,d_Y,y_0)$ be a pointed metric space and let $1\le p<\infty$. For
$u\in L^p(G;Y)$ define
\[
 \mathcal E_{s,p}(u)
 :=
 \int_G\int_G
 \frac{d_Y(u(x),u(y))^p}
      {N_G(x^{-1}y)^{Q+sp}}
 \,\dd y\,\dd x
\]
and
\[
 F_u(h):=\int_G d_Y(u(x),u(xh))^p\,\dd x.
\]
By the change of variables $y=xh$ and unimodularity,
\begin{equation}\label{eq:Carnot-energy-F}
 \mathcal E_{s,p}(u)
 =
 \int_G
 \frac{F_u(h)}
      {N_G(h)^{Q+sp}}
 \,\dd h.
\end{equation}

The following translation-decoupling lemma is a direct consequence of
Theorem~\ref{thm:abstract-decoupling}.

\begin{lemma}
\label{lem:Carnot-translation-decoupling}
Let $(\nu_\alpha)_\alpha$ be Borel probability measures on $G$ such that
$\nu_\alpha(K)\to0$ for every compact set $K\subset G$. Then, for every
$u\in L^p(G;Y)$,
\[
 \int_G F_u(h)\,\dd\nu_\alpha(h)
 \longrightarrow 2\Mp(u).
\]
\end{lemma}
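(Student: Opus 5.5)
We will realize $\int_G F_u\,\dd\nu_\alpha$ as the increment energy of a two-marginal coupling and then apply Theorem~\ref{thm:abstract-decoupling}, implication \ref{item:escape-iff}$\Rightarrow$\ref{item:allmaps-iff}. For each $\alpha$ define the positive Borel measure $\Pi_\alpha$ on $G\times G$ as the push-forward of $\dd x\otimes\nu_\alpha(h)$ under $\Phi(x,h):=(x,xh)$. That is, for nonnegative Borel $\varphi$,
\[
 \int_{G\times G}\varphi\,\dd\Pi_\alpha=\int_G\int_G\varphi(x,xh)\,\dd x\,\dd\nu_\alpha(h).
\]
By Tonelli, $J_{\Pi_\alpha}(u)=\int_G F_u(h)\,\dd\nu_\alpha(h)$.

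Next we check the marginals. The first marginal is $\nu_\alpha(G)\,\dd x=\dd x$, since $\nu_\alpha$ is a probability measure. For the second, right invariance of Haar measure, which holds because Carnot groups are nilpotent and hence unimodular, gives $\int_G\varphi(xh)\,\dd x=\int_G\varphi\,\dd x$ for each $h$. Hence $a_\alpha=b_\alpha=1$, the uniform bound and the weak-* convergence to $a_\infty=b_\infty=1$ are trivial, and the target of the theorem is $2\Mp(u)$. The space $G$ with a homogeneous distance (or just the topology induced by $N_G$) is proper, and Haar measure is Radon.

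The main step is escape from bounded rectangles. By Remark~\ref{compact} it suffices to fix a compact $K\subset G$. Then
\[
 \Pi_\alpha(K\times K)=\int_K\nu_\alpha(\{h: xh\in K\})\,\dd x\le |K|\,\nu_\alpha(K^{-1}K).
\]
The set $K^{-1}K$ is compact, being the image of $K\times K$ under the continuous map $(x,y)\mapsto x^{-1}y$. The hypothesis therefore gives $\nu_\alpha(K^{-1}K)\to0$, and since $|K|<\infty$ we get $\Pi_\alpha(K\times K)\to0$. Theorem~\ref{thm:abstract-decoupling} now yields the claim for every $u\in L^p(G;Y)$.

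The only delicate point is measurability: $\Phi$ is continuous, so $\Pi_\alpha$ is Borel, and $F_u$ is Borel by Tonelli applied to $(x,h)\mapsto d_Y(u(x),u(xh))^p$. No genuine obstacle remains beyond this.
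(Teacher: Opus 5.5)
Your proposal is correct and follows the paper's proof essentially verbatim. It uses the same coupling $\Pi_\alpha$ (the push-forward of $\dd x\otimes\nu_\alpha$ under $(x,h)\mapsto(x,xh)$), the same unit Haar marginals from unimodularity, and the same bound $\Pi_\alpha(K\times K)\le |K|\,\nu_\alpha(K^{-1}K)$; you reduce to compact $K$ via Remark~\ref{compact}, while the paper works with $\overline K^{-1}\overline K$ directly. The only gap is that Theorem~\ref{thm:abstract-decoupling} assumes $Y$ has at least two points, so you should handle the one-point case separately, where both sides vanish trivially, as the paper does.
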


\begin{proof}
Associate with $\nu_\alpha$ the coupling $\Pi_\alpha$ defined by
\[
 \int_{G\times G}\Phi(x,y)\,\dd\Pi_\alpha(x,y)
 :=
 \int_G\int_G \Phi(x,xh)\,\dd x\,\dd\nu_\alpha(h)
\]
for every nonnegative Borel function $\Phi$. Since every Carnot group is
unimodular, both marginals of $\Pi_\alpha$ are Haar measure. Moreover, if
$K\subset G$ is bounded, then $\overline K^{-1}\overline K$ is compact and
\[
 \Pi_\alpha(K\times K)
 \le |K|\,\nu_\alpha(\overline K^{-1}\overline K)
 \longrightarrow0.
\]
Thus $\Pi_\alpha$ escapes from bounded rectangles. If $Y$ has at least two
points, Theorem~\ref{thm:abstract-decoupling} applies with
$a_\alpha=b_\alpha=1$; the one-point case is trivial. Since
\[
 J_{\Pi_\alpha}(u)=\int_G F_u(h)\,\dd\nu_\alpha(h),
\]
the conclusion follows.
\end{proof}

\begin{theorem}
\label{thm:Carnot-MS}
Let $1\le p<\infty$ and $u\in L^p(G;Y)$. Assume that, for all sufficiently
small $s>0$,
\[
 \int_{\{N_G(h)<1\}}
 \frac{F_u(h)}
      {N_G(h)^{Q+sp}}
 \,\dd h<\infty
\]
and that
\begin{equation}\label{eq:Carnot-minimal-short}
 \lim_{s\downarrow0}
 s\int_{\{N_G(h)<1\}}
 \frac{F_u(h)}
      {N_G(h)^{Q+sp}}
 \,\dd h
 =0.
\end{equation}
Then
\begin{equation}\label{eq:Carnot-MS}
 \lim_{s\downarrow0}
 s\mathcal E_{s,p}(u)
 =
 \frac{2\sigma_G(S_G)}{p}\Mp(u).
\end{equation}

In particular, \eqref{eq:Carnot-MS} holds whenever
\begin{equation}\label{eq:Carnot-positive-order}
 \mathcal E_{\theta,p}(u)<\infty
\end{equation}
for some $\theta>0$.
\end{theorem}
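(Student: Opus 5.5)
Using \eqref{eq:Carnot-energy-F}, we split
\[
 s\mathcal E_{s,p}(u)=s\int_{\{N_G(h)<1\}}\frac{F_u(h)}{N_G(h)^{Q+sp}}\,\dd h
 +s\int_{\{N_G(h)\ge1\}}\frac{F_u(h)}{N_G(h)^{Q+sp}}\,\dd h .
\]
The first term tends to zero by \eqref{eq:Carnot-minimal-short}. The second term will be handled by Lemma~\ref{lem:Carnot-translation-decoupling}. To do this, introduce the measures
\[
 \dd\nu_s(h):=\frac{p}{\sigma_G(S_G)}\,s\,\one_{\{N_G(h)\ge1\}}N_G(h)^{-Q-sp}\,\dd h .
\]
The polar decomposition gives
\[
 s\int_{\{N_G\ge1\}}N_G^{-Q-sp}\,\dd h=\sigma_G(S_G)\,s\int_1^\infty r^{-1-sp}\,\dd r=\frac{\sigma_G(S_G)}p,
\]
so each $\nu_s$ is a Borel probability measure. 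The far-range term is then exactly $\frac{\sigma_G(S_G)}{p}\int_G F_u\,\dd\nu_s$.

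Next we check that $\nu_s$ escapes compact sets. Since $N_G$ is continuous and proper, every compact $K\subset G$ lies in some ball $B_G(e,T)$. The same polar computation gives
\[
 \nu_s(B_G(e,T))\le 1-T^{-sp}\to0 .
\]
Lemma~\ref{lem:Carnot-translation-decoupling} therefore yields $\int_G F_u\,\dd\nu_s\to2\Mp(u)$ as $s\downarrow0$. Here $s$ runs along the directed set $(0,1)$ ordered by decreasing $s$, so nets are admissible. Multiplying by $\sigma_G(S_G)/p$ and adding the vanishing short-range term gives \eqref{eq:Carnot-MS}. Finiteness of the far integral for each $s>0$ is automatic, because $F_u(h)\le2^p\Mp(u)$ by the triangle inequality and unimodularity (right translations preserve Haar measure).

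For the final claim, assume $\mathcal E_{\theta,p}(u)<\infty$. For $0<s<\theta$ and $N_G(h)<1$ we have $N_G(h)^{-Q-sp}\le N_G(h)^{-Q-\theta p}$. Hence the short-range integral is bounded by $\mathcal E_{\theta,p}(u)$, which is finite, and multiplying by $s$ sends it to zero. This is the argument of Proposition~\ref{prop:fractional-general}, so \eqref{eq:Carnot-minimal-short} holds.

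The only step needing any care is the identification of the far term with a probability measure escaping compacts. This relies on the polar formula and the properness of $N_G$, and we take both as given. Once that identification is made, the rest is a direct application of the lemma.
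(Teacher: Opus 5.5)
Your proposal is correct and follows essentially the same route as the paper's proof. Both normalize the far-range kernel into probability measures $\nu_s$ that escape compact sets by the polar formula, identify the far term with $\frac{\sigma_G(S_G)}{p}\int_G F_u\,\dd\nu_s$, invoke Lemma~\ref{lem:Carnot-translation-decoupling}, and obtain the positive-order case from $N_G(h)^{-Q-sp}\le N_G(h)^{-Q-\theta p}$ on $\{N_G<1\}$. Your split at $\{N_G\ge 1\}$ rather than $\{N_G>1\}$ is exact and so avoids the null level set $\{N_G=1\}$; the rest matches.
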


\begin{proof}
For $s>0$ define
\begin{equation}\label{eq:nu-s-Carnot}
 \dd\nu_s(h)
 :=
 \frac{sp}{\sigma_G(S_G)}
 \frac{\one_{\{N_G(h)>1\}}}
      {N_G(h)^{Q+sp}}
 \,\dd h.
\end{equation}
By the polar formula,
\[
 \int_G\dd\nu_s
 =
 \frac{sp}{\sigma_G(S_G)}\sigma_G(S_G)
 \int_1^\infty r^{-1-sp}\,\dd r
 =1.
\]
Thus $\nu_s$ is a probability measure. Moreover, for every $R>1$,
\[
 \nu_s(\{1<N_G\le R\})
 =
 sp\int_1^Rr^{-1-sp}\,\dd r
 =
 1-R^{-sp}
 \longrightarrow0
\]
as $s\downarrow0$. Since every compact subset of $G$ is contained in some
ball, $(\nu_s)_s$ escapes to infinity.

Split \eqref{eq:Carnot-energy-F} into the regions $N_G(h)<1$ and
$N_G(h)>1$. By \eqref{eq:Carnot-minimal-short}, the normalized local
contribution vanishes. For the far part, \eqref{eq:nu-s-Carnot} gives
\[
 s\int_{\{N_G(h)>1\}}
 \frac{F_u(h)}{N_G(h)^{Q+sp}}\,\dd h
 =
 \frac{\sigma_G(S_G)}{p}
 \int_G F_u(h)\,\dd\nu_s(h).
\]
Lemma~\ref{lem:Carnot-translation-decoupling} yields
\[
 \int_G F_u(h)\,\dd\nu_s(h)
 \longrightarrow2\Mp(u),
\]
and \eqref{eq:Carnot-MS} follows.

It remains to prove the last assertion. Assume that
\eqref{eq:Carnot-positive-order} holds for some $\theta>0$. For
$0<s<\theta$ and $N_G(h)<1$,
\[
 N_G(h)^{-Q-sp}
 \le
 N_G(h)^{-Q-\theta p}.
\]
Consequently,
\[
 0
 \le
 s\int_{\{N_G(h)<1\}}
 \frac{F_u(h)}{N_G(h)^{Q+sp}}\,\dd h
 \le
 s\mathcal E_{\theta,p}(u)
 \longrightarrow0.
\]
Thus \eqref{eq:Carnot-minimal-short} holds and the first part of the theorem
applies.
\end{proof}

The theorem also gives a corresponding statement for maps between
Carnot groups. Let $H$ be another Carnot group endowed with a left-invariant
homogeneous distance $d_H$, and write
\[
 N_H(z):=d_H(e_H,z).
\]

\begin{corollary}
\label{cor:Carnot-to-Carnot}
Let $u:G\to H$ satisfy
\[
 \int_G N_H(u(x))^p\,\dd x<\infty.
\]
Assume the short-range condition
\[
 \lim_{s\downarrow0}
 s\int_{\{N_G(h)<1\}}
 \int_G
 \frac{N_H(u(x)^{-1}u(xh))^p}
      {N_G(h)^{Q+sp}}
 \,\dd x\,\dd h
 =0.
\]
Then
\begin{equation}\label{eq:Carnot-to-Carnot-MS}
 \begin{aligned}
 &\lim_{s\downarrow0}
 s\int_G\int_G
 \frac{N_H(u(x)^{-1}u(y))^p}
      {N_G(x^{-1}y)^{Q+sp}}
 \,\dd y\,\dd x
 \\
 &\qquad=
 \frac{2\sigma_G(S_G)}{p}
 \int_G N_H(u(x))^p\,\dd x.
 \end{aligned}
\end{equation}
In particular, the conclusion holds if the same double integral is finite for
one positive order $\theta>0$.
\end{corollary}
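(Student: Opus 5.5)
This is a direct specialization of Theorem~\ref{thm:Carnot-MS}. We take the target to be the pointed metric space $(Y,d_Y,y_0)=(H,d_H,e_H)$. By left-invariance of $d_H$,
\[
 d_H(u(x),u(y))=d_H\bigl(e_H,u(x)^{-1}u(y)\bigr)=N_H\bigl(u(x)^{-1}u(y)\bigr),
\]
and likewise $d_H(u(x),e_H)=N_H(u(x))$. The integrability hypothesis $\int_G N_H(u(x))^p\,\dd x<\infty$ is therefore exactly $u\in L^p(G;H)$ relative to the base point $e_H$, with $\Mp(u)=\int_G N_H(u(x))^p\,\dd x$. With these identifications, the double integral on the left of \eqref{eq:Carnot-to-Carnot-MS} is $\mathcal E_{s,p}(u)$. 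The function appearing in the short-range condition is
\[
 F_u(h)=\int_G N_H\bigl(u(x)^{-1}u(xh)\bigr)^p\,\dd x .
\]

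The plan is to verify the two hypotheses of Theorem~\ref{thm:Carnot-MS}.

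\emph{Limit condition.} The assumed short-range limit, rewritten via Tonelli as an integral of $F_u(h)N_G(h)^{-Q-sp}$ over $\{N_G(h)<1\}$, is exactly \eqref{eq:Carnot-minimal-short}.

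\emph{Finiteness for small $s$.} Finiteness of the short-range integral for all sufficiently small $s>0$ is implicit: the stated limit involves $s$ times that integral converging to $0$, which requires the integral to be finite for small $s$. I would note this explicitly; equivalently, one reads the hypothesis as including finiteness, as in the theorem.

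Theorem~\ref{thm:Carnot-MS} then gives \eqref{eq:Carnot-MS}, which after substitution is \eqref{eq:Carnot-to-Carnot-MS}. Note that $H$ need not be Polish or have any linear structure for this, since Section~\ref{metric} only uses a pointed metric space.

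\emph{The ``in particular'' clause.} If the double integral is finite at order $\theta>0$, this is \eqref{eq:Carnot-positive-order} for the target $(H,d_H,e_H)$, and the last assertion of Theorem~\ref{thm:Carnot-MS} applies. Directly: for $s<\theta$ one has $N_G(h)^{-Q-sp}\le N_G(h)^{-Q-\theta p}$ on $\{N_G<1\}$, so the short-range term is at most $s\,\mathcal E_{\theta,p}(u)\to0$.

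The only point needing care is the translation between the group notation and the metric notation, namely the left-invariance identity displayed above. There is no genuine analytic obstacle.
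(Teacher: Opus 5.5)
Your proposal is correct and matches the paper's proof, which is the one-line specialization of Theorem~\ref{thm:Carnot-MS} to the target $(Y,d_Y,y_0)=(H,d_H,e_H)$. Your checks of the left-invariance identity $d_H(u(x),u(y))=N_H(u(x)^{-1}u(y))$, of the finiteness for small $s$ implied by the limit hypothesis, and of the positive-order clause spell out exactly what that specialization uses.
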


\begin{proof}
Apply Theorem~\ref{thm:Carnot-MS} with $(Y,d_Y,y_0)=(H,d_H,e_H)$.
\end{proof}

Formula \eqref{eq:Carnot-to-Carnot-MS} uses only the homogeneous distance on
the target; no Pansu differential is involved.

Next consider the heat kernel of a homogeneous
sub-Laplacian on $G$. Let $p_t(h)$ be normalized by
\[
 \int_Gp_t(h)\,\dd h=1.
\]
Its homogeneous scaling is
\begin{equation*}
 p_t(h)
 =
 t^{-Q/2}p_1(\delta_{t^{-1/2}}h).
\end{equation*}

\begin{corollary}
\label{cor:Carnot-heat-MS}
Let $p_t$ be the heat kernel of a homogeneous sub-Laplacian on $G$. For a
pointed metric space $(Y,d_Y,y_0)$, $1\le p<\infty$, and $u\in L^p(G;Y)$,
set
\begin{equation*}
 J_p^G(t,u)
 :=\int_G\int_G d_Y(u(g),u(h))^p
 p_t(g^{-1}h)\,\dd h\,\dd g
\end{equation*}
and
\begin{equation*}
 \mathscr N_{s,p}^{G,h}(u)^p
 :=\int_0^\infty t^{-sp/2-1}J_p^G(t,u)\,\dd t.
\end{equation*}
If $\mathscr N_{s_0,p}^{G,h}(u)<\infty$ for some $s_0>0$, then
\begin{equation}\label{eq:Carnot-heat-MS}
 \lim_{s\downarrow0}s\mathscr N_{s,p}^{G,h}(u)^p
 =\frac4p\int_Gd_Y(u(g),y_0)^p\,\dd g.
\end{equation}
\end{corollary}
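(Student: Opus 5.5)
The plan is to deduce Corollary~\ref{cor:Carnot-heat-MS} from Theorem~\ref{thm:Abel-semigroup} with $\beta=1/2$. We take $X=G$ with Haar measure and $q_t(g,h):=p_t(g^{-1}h)$, so that $\Pi_t$ is exactly the coupling used in Lemma~\ref{lem:Carnot-translation-decoupling} with $\nu_t:=p_t(h)\,\dd h$.

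\textbf{Marginals.} First we verify that both marginal densities are identically one. For the first marginal, left invariance of Haar measure gives
\[
a_t(g)=\int_G p_t(g^{-1}h)\,\dd h=\int_G p_t(h)\,\dd h=1.
\]
For the second marginal, unimodularity together with the inversion invariance of Haar measure gives
\[
b_t(h)=\int_G p_t(g^{-1}h)\,\dd g=\int_G p_t(k)\,\dd k=1.
\]
No symmetry of $p_t$ is needed for this. Hence $a_t=b_t=a_\infty=b_\infty=1$, and the uniform $L^\infty$ bound and the weak-* convergence required in Corollary~\ref{cor:transition-decoupling} are trivial.

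\textbf{Escape.} This is the only genuine step. We show that $\nu_t(K)\to0$ for every compact $K\subset G$ and then invoke Lemma~\ref{lem:Carnot-translation-decoupling}; its proof also gives $\Pi_t(K\times K)\le|K|\,\nu_t(\overline K^{-1}\overline K)\to0$. By the homogeneous scaling and the substitution $h=\delta_{t^{1/2}}k$, whose Jacobian is $t^{Q/2}$,
\[
\nu_t(K)=\int_K t^{-Q/2}p_1(\delta_{t^{-1/2}}h)\,\dd h\le t^{-Q/2}\,\|p_1\|_\infty\,|K|\longrightarrow0,
\]
using that $p_1$ is bounded, which is the standard smoothness of the Folland heat kernel. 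If one prefers to avoid boundedness, a dominated-convergence argument also works: rewrite $\nu_t(K)=\nu_1(\delta_{t^{-1/2}}K)$ and note that $\delta_{t^{-1/2}}K$ shrinks to $\{e\}$, a set of Haar measure zero. Either way, Corollary~\ref{cor:transition-decoupling} yields $J_p^G(t,u)\to 2\Mp(u)$.

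\textbf{Conclusion.} Since $\mathscr N_{s_0,p}^{G,h}(u)<\infty$, the integral over $(0,1)$ at order $s_0$ is finite, so Proposition~\ref{prop:Besov-positive-order} supplies the short-time condition \eqref{eq:short-time-minimal-beta}. Theorem~\ref{thm:Abel-semigroup} with $\beta=1/2$ then gives the limit
\[
\frac{2}{p/2}\,\Mp(u)=\frac4p\,\Mp(u),
\]
which is \eqref{eq:Carnot-heat-MS}. The one-point target case is trivial, since then $J_p^G\equiv0$ and $\Mp(u)=0$.

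I expect no serious obstacle. The only point needing care is to cite the boundedness (or integrability) of $p_1$ and the invariance properties of Haar measure correctly.
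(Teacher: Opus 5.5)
Your proposal is correct and follows the paper's proof essentially step by step. You show both marginals are Haar via left invariance and unimodularity, obtain escape from the scaling $\|p_t\|_\infty=t^{-Q/2}\|p_1\|_\infty$, and then apply Corollary~\ref{cor:transition-decoupling}, Proposition~\ref{prop:Besov-positive-order} and Theorem~\ref{thm:Abel-semigroup} with $\beta=1/2$. Two minor differences: the paper bounds $\Pi_t(K\times K)\le|K|^2\|p_t\|_\infty$ directly instead of going through $\nu_t(\overline K^{-1}\overline K)$, and your optional variant $\nu_t(K)=\nu_1(\delta_{t^{-1/2}}K)\to0$ is a valid small refinement, since it needs only $p_1\in L^1$ rather than boundedness of $p_1$.
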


\begin{proof}
Define
\[
 \dd\Pi_t(g,h):=p_t(g^{-1}h)\,\dd g\,\dd h.
\]
The heat kernel is conservative and symmetric. Haar measure on a Carnot
group is unimodular, so for every $g,h\in G$,
\[
 \int_Gp_t(g^{-1}h)\,\dd h=1,
 \qquad
 \int_Gp_t(g^{-1}h)\,\dd g=1.
\]
Thus both marginals of $\Pi_t$ are Haar measure.

The time-one heat kernel is bounded. Its scaling gives
\[
 \|p_t\|_{L^\infty(G)}
 =t^{-Q/2}\|p_1\|_{L^\infty(G)}.
\]
Consequently, for every bounded Borel set $K\subset G$,
\[
 \Pi_t(K\times K)
 \le |K|^2\|p_t\|_{L^\infty(G)}
 =|K|^2t^{-Q/2}\|p_1\|_{L^\infty(G)}
 \longrightarrow0.
\]
The couplings therefore escape from bounded rectangles, and
Corollary~\ref{cor:transition-decoupling} yields
\[
 J_p^G(t,u)\longrightarrow2\Mp(u).
\]
The positive-order assumption implies the short-time condition by
Proposition~\ref{prop:Besov-positive-order}. Theorem~\ref{thm:Abel-semigroup}
with $\beta=1/2$ then gives \eqref{eq:Carnot-heat-MS}.
\end{proof}

\begin{remark}[Comparison with \cite{GarofaloTralli2024}]
\label{rem:GT-comparison}
For scalar-valued maps, Corollary~\ref{cor:Carnot-heat-MS} coincides with
the small-order result of Garofalo--Tralli
\cite[Theorem~1.5]{GarofaloTralli2024}.  With their notation,
\[
 \mathcal N_{s,p}^{\mathbb G}(f)^p
 :=\int_0^\infty t^{-sp/2-1}
 \int_{\mathbb G}P_t\bigl(|f-f(g)|^p\bigr)(g)\,\dd g\,\dd t,
\]
and for $1\le p<\infty$ they prove
\[
 \lim_{s\downarrow0}s\,\mathcal N_{s,p}^{\mathbb G}(f)^p
 =\frac4p\|f\|_{L^p(\mathbb G)}^p
\]
for $f$ belonging to one positive heat-Besov class.  A central ingredient of
their proof is an integral decoupling property of the Carnot heat kernel.

In the language of the present paper, that decoupling property says that the
measures
\[
 \dd\Pi_t(g,h)=p_t(g^{-1}h)\,\dd g\,\dd h
\]
escape every bounded rectangle as $t\to\infty$.  Conservation and
self-adjointness give two Haar marginals of mass one.  Hence
Theorem~\ref{thm:abstract-decoupling} yields
$J_p(t,f)\to2\|f\|_p^p$, and the Abel factor associated with
$t^{-sp/2-1}$ is $2/p$, producing the constant $4/p$. The proof above
isolates these two steps in the abstract coupling and Abelian statements.
\end{remark}

\section*{Use of generative AI}
OpenAI's ChatGPT SOL was used for language editing and exploratory
discussion. The author checked the proofs and references and is responsible
for the contents of the  manuscript.

\end{document}